\newcolumntype{L}{>{$}l<{$}}
\newcommand{\overbar}[1]{\mkern 1.5mu\overline{\mkern-1.5mu#1\mkern-1.5mu}\mkern 1.5mu}
\newcommand{\no}[1]{\overbar{#1}}
\def\F{\mathcal F}
\def\P{\mathcal P}
\def\M{\mathcal M}
\def\I{\mathcal I}
\def\H{\mathcal H}
\def\pr{\mathbb{P}}
\def\prev{\mathbb{P}}
\def\K{\mathcal{K}}
\def\G{\mathcal{G}}
\def\S{\mathcal{S}}
\def\C{\mathcal{C}}
\def\D{\mathcal{D}}
\newtheorem{algorithm}{Algorithm}
\begin{document}

\title{Insert your title here%\thanks{Grants or other notes
%about the article that should go on the front page should be
%placed here. General acknowledgments should be placed at the end of the article.}
}
\title{Generalized Logical Operations among \\  Conditional Events
\footnote{This paper is a substantially extended version of \cite{GiSa17}.}}

%\subtitle{Conjunction, Disjunction and Negation}

\titlerunning{Generalized Logical Operations among   Conditional Events}        % if too long for running head

\author{First Author         \and
        Second Author %etc.
}

%\authorrunning{Short form of author list} % if too long for running head

\institute{F. Author \at
              first address \\
              Tel.: +123-45-678910\\
              Fax: +123-45-678910\\
              \email{fauthor@example.com}           %  \\
%             \emph{Present address:} of F. Author  %  if needed
           \and
           S. Author \at
              second address
}

\author{Angelo
	Gilio   \and   Giuseppe Sanfilippo
}

\institute{
	A. Gilio \at
	Department SBAI,
	University of Rome ``La Sapienza'', Italy
	\\ \email{angelo.gilio@sbai.uniroma1.it}\\
	Retired
	\and
	G. Sanfilippo \at
	Department of Mathematics and Computer Science,
	University of Palermo, Italy
	\\ \email{giuseppe.sanfilippo@unipa.it}\\
Both authors contributed equally to this work.
}

\date{Received: date / Accepted: date}
% The correct dates will be entered by the editor

\maketitle
\begin{abstract}
We generalize, by a progressive procedure, the notions of conjunction and disjunction of two conditional events to the case of $n$ conditional events.
In our coherence-based approach, conjunctions and disjunctions are suitable conditional random quantities.
We define  the notion of negation, by verifying  De Morgan's Laws.  We also show that conjunction and disjunction satisfy the associative and commutative properties, and a monotonicity property.
Then, we give some results on coherence of prevision assessments for some families of compounded conditionals; in particular we examine the Fr\'echet-Hoeffding bounds. Moreover, we study the reverse probabilistic inference from  the conjunction $\mathcal{C}_{n+1}$ of $n+1$ conditional events  to the family $\{\mathcal{C}_{n},E_{n+1}|H_{n+1}\}$.
We  consider the relation with the notion of quasi-conjunction and 
we examine in detail the coherence of the prevision assessments related with the conjunction of  three conditional events.  
Based on conjunction, we also give a characterization of p-consistency and of p-entailment, with applications to  several inference rules in probabilistic nonmonotonic reasoning. Finally, we examine some non p-valid inference rules; then,  we illustrate by an example two methods which allow to suitably modify non p-valid inference rules in order  to get  inferences  which are p-valid.
\keywords{
	Conditional events\and 
	Conditional random quantities \and 
	 Conjunction \and Disjunction\and 
	 Negation \and 
	 Fr\'echet-Hoeffding bounds\and  
	 Coherent prevision assessments\and 
	 Coherent extensions\and 
	 Quasi conjunction\and
	 Probabilistic reasoning \and
	 p-entailment \and
	 Inference rules.
 }
%\keywords{First keyword \and Second keyword \and More}
 %\PACS{PACS code1 \and PACS code2 \and more}
 \subclass{MSC 60A05 \and MSC 03B48  \and MSC 68T37}
 % 60A05 Axioms; other general questions
 % 03B48  Probability and inductive logic
%  68T37 Reasoning under uncertainty
\end{abstract}
\section{Introduction}
The research on combining logic and probability has a long
history (see, e.g., \cite{adams75,boole_1857,coletti02,defi36,hailperin96}).
In this paper we use a coherence-based approach to probability, which allows  to introduce probability assessments on arbitrary families  of conditional events, by   properly managing conditioning events of zero probability
 (see, e.g., \cite{biazzo00,biazzo05,coletti02,gilio02,gilio12ijar,gilio13,GiSa14,gilio16,SPOG18}).
In probability theory and in probability logic a relevant problem, largely discussed by many authors (see, e.g., \cite{benferhat97,CoSV13,CoSV15,GoNW91}), is that of suitably defining logical operations among conditional events. In a pioneering paper, written in 1935 by de Finetti (\cite{defi36}), it was proposed  a three-valued logic for conditional events coinciding  with that one of Lukasiewicz.  
A survey of the many  contributions by different authors (such as Adams, Belnap, Calabrese, de Finetti, Dubois, van Fraassen, McGee, Goodmann, Lewis, Nguyen, Prade, Schay) 
to research on  three-valued logics and compounds of conditionals has been given in \cite{Miln97};
conditionals have  also been extensively studied in \cite{edgington95,McGe89}.
In the literature, the conjunction and disjunction  have been usually defined as  suitable conditionals; see e.g.   
\cite{adams75,Cala87,CiDu12,GoNW91}.
A theory for the compounds of conditionals has been proposed in  \cite{McGe89,Kauf09}.  A related  theory  has been  developed in the setting of coherence in \cite{GiSa13c,GiSa13a,GiSa14}; in these papers,  conjunction and disjunction of two conditional events are not defined as conditional events,
 but as suitable {\em conditional random quantities}, with values in the interval $[0,1]$. 
In the present paper we generalize the notions of conjunction and disjunction of two conditional events to the case of $n$ conditional events; we also give the notion of negation.
Then, we examine a monotonicity property for conjunction and disjunction. Moreover,  we give some results on coherence of prevision assessments for some families of compounded conditionals; in particular we examine the  Fr\'echet-Hoeffding bounds. Finally, we examine in detail the coherence of prevision assessments related with the conjunction of three conditional events. 
The paper is organized as described below. In Section~\ref{SECT:PRELIMINARIES} we 
recall some preliminary notions and results which concern coherence, quasi conjunction, conjunction, disjunction, and   Fr\'echet-Hoeffding bounds.
In Section \ref{SECT:CONJUNCTIONn} we introduce, in a progressive way, the notions of conjunction and disjunction for $n$ conditional events; then, we define the notion of  negation  and  we show that De Morgan's Laws are satisfied.  We  define the notion of conjunction (resp., disjunction) for the conjunctions (resp., disjunctions) associated with two families of conditional events, by showing then the validity of commutative and associative properties.   In Section~\ref{SECT:MONOTONICITY}, after  a preliminary result concerning  the inequality $X|H\leq Y|K$ between two conditional random quantities, we show that the conjunction $\C_{n+1}$  of $n+1$ conditional events  is a conditional random quantity less than or equal to  any conjunction  $\C_n$ of a subfamily of $n$  conditional events. Likewise, we show that the disjunction $\D_{n+1}$  of $n+1$ conditional events  is greater than or equal to  any disjunction  $\D_n$ of a subfamily of $n$  conditional events. We also show that $\C_n$ and $\D_n$ belong to  the interval $[0,1]$. Moreover, we derive some inequalities from the monotony property. In Section~\ref{SECT:TETRAEDRO}, based on a geometrical approach, we characterize by an iterative procedure the set of coherent assessments on the family $\{C_n,E_{n+1}|H_{n+1},\C_{n+1}\}$. In Section \ref{SECT:BACKPROPAGATION}  we
study the (reverse) inference from $\C_{n+1}$
to $\{\C_{n},E_{n+1}|H_{n+1}\}$, by determining the
set of coherent extensions $(\mu_n,x_{n+1})$ of any coherent assessment $\mu_{n+1}$,
 where $\mu_{n}=\prev(\C_{n})$, $x_{n+1}=P(E_{n+1}|H_{n+1})$, and $\mu_{n+1}=\prev(\C_{n+1})$.
In Section \ref{SECT:FRECHET} we show that  the prevision of the conjunction $\C_n$ satisfies the  Fr\'echet-Hoeffding bounds. Then, by exploiting De Morgan's Laws, we give the dual result for the disjunction $\D_n$.
In Section \ref{SECT:CONJUNCTION3} we examine in detail the conjunction for a family of three conditional events $E_1|H_1,E_2|H_2,E_3|H_3$.  We also consider the relation with the notion  of  quasi-conjunction studied in \cite{adams75}; see also \cite{GiSa13IJAR,gilio13}.  We also determine  the set of coherent prevision assessments on the whole family   $\{E_1|H_1,E_2|H_2,E_3|H_3,
(E_1|H_1)\wedge (E_2|H_2), (E_1|H_1)\wedge (E_3|H_3), (E_2|H_2)\wedge (E_3|H_3),(E_1|H_1)\wedge (E_2|H_2)\wedge (E_3|H_3)\}$. Moreover, we consider the particular case where $H_1=H_2=H_3=H$.
In Section \ref{SECT:APPL}, by applying our notion  of conjunction, we give a  characterization of p-consistency and p-entailment and we examine  some p-valid inference rules in probabilistic nonmonotonic reasoning.  Moreover, based on a suitable notion of iterated conditioning, we briefly describe a characterization of p-entailment in the case of two premises.
In Section \ref{SECT:NPV-PV}, after examining some non p-valid inference rules, 
we  illustrate two methods which allow to construct p-valid inferences. 
Finally, in Section \ref{SECT:CONCLUSIONS} we give a summary of  results.
Notice that for almost all (new) results of this paper the proofs  are given in Appendix \ref{SECT:APPENDIXA}.
\section{Some Preliminaries} \label{SECT:PRELIMINARIES}
In this section we recall some basic notions and results on coherence (see, e.g., \cite{biazzo05,BiGS12,CaLS07,coletti02,PeVa17}).
In our approach an event $A$ represents an uncertain fact described by a (non ambiguous) logical proposition; hence  $A$ is a two-valued logical entity which can be \emph{true}, or \emph{false}.
The indicator of $A$, denoted by the same symbol, is  1, or 0, according to  whether $A$ is true, or false. The sure event is denoted by $\Omega$ and the impossible event is denoted by $\emptyset$.  Moreover, we denote by $A\land B$, or simply $AB$, (resp., $A \vee B$) the logical conjunction (resp., logical disjunction).   The  negation of $A$ is denoted $\no{A}$. Given any events $A$ and $B$, we simply write $A \subseteq B$ to denote that $A$ logically implies $B$, that is  $A\no{B}$ is the impossible event $\emptyset$. We recall that  $n$ events are logically independent when the number $m$ of constituents, or possible worlds, generated by them  is $2^n$ (in general  $m\leq 2^n$).
\subsection{Conditional events and coherent  probability assessments}
\label{sect:2.2}
Given two events $E,H$,
with $H \neq \emptyset$, the conditional event $E|H$
is defined as a three-valued logical entity which is \emph{true}, or
\emph{false}, or \emph{void}, according to whether $EH$ is true, or $\no{E}H$
is true, or $\no{H}$ is true, respectively.
%The notion of logical inclusion among events has been generalized to conditional events by Goodman and Nguyen in \cite{GoNg88} (see also \cite{gilio13ins}). Given two conditional events 
%$E_1|H_1$ and $E_2|H_2$, we say that $E_1|H_1$ implies $E_2|H_2$, denoted by $E_1|H_1 \subseteq E_2|H_2$, iff $E_1H_1$ {\em true} implies $E_2H_2$ {\em true} and $\no{E}_2H_2$ {\em true} implies $\no{E}_1H_1$ {\em true}; i.e., iff $E_1H_1 \subseteq E_2H_2$ and $\no{E}_2H_2 \subseteq \no{E}_1H_1$.
We recall that, agreeing to
the betting metaphor, if you assess $P(E|H)=p$, then, for every  real number $s$, you are willing
to pay  an amount $ps$ and  to receive $s$, or 0, or $ps$, according to
whether $EH$ is true, or $\no{E}H$ is true, or $\no{H}$ is true (bet
called off), respectively. 
Then, the random gain associated with the assessment $P(E|H)=p$ is 
$G = sHE+ps\no{H}-ps=sH(E-p)$. Given a real function $P : \; \mathcal{K}
\, \rightarrow \, \mathcal{R}$, where $\mathcal{K}$ is an arbitrary
family of conditional events, let us consider a subfamily 
$\mathcal{F} = \{E_1|H_1, \ldots, E_n|H_n\}$ of $\mathcal{K}$
 and the vector $\mathcal{P} =(p_1, \ldots, p_n)$,
where $p_i = P(E_i|H_i) \, ,\;\; i \in J_n=\{1,\ldots,n\}$. We denote by
$\mathcal{H}_n$ the disjunction $H_1 \vee \cdots \vee H_n$. As
$E_iH_i \vee \no{E}_iH_i \vee \no{H}_i = \Omega \,,\;\; i \in J_n$,
by expanding the expression $\bigwedge_{i\in J_n}(E_iH_i \vee \no{E}_iH_i
\vee \no{H}_i)$ we can represent $\Omega$ as the disjunction of $3^n$
logical conjunctions, some of which may be impossible. 
The
remaining ones are the constituents generated by 
$\mathcal{F}$ and, of course, are a partition of $\Omega$.
We denote by $C_1, \ldots, C_m$ the constituents which logically imply 
$\mathcal{H}_n$ and (if $\mathcal{H}_n \neq \Omega$) by $C_0$ the
remaining constituent $\mathcal{\no{H}}_n = \no{H}_1 \cdots \no{H}_n$, so
that
\[
\mathcal{H}_n = C_1 \vee \cdots \vee C_m \,,\;\;\; \Omega =
\mathcal{\no{H}}_n \vee
\mathcal{H}_n = C_0 \vee C_1 \vee \cdots \vee C_m \,,\;\;\; m+1 \leq 3^n
\,.
\]
In the context of betting scheme, with the pair $(\mathcal{F}, \mathcal{P}$) we associate the random gain $G = \sum_{i\in J_n} s_iH_i(E_i - p_i)$,
where $s_1, \ldots, s_n$ are $n$ arbitrary real numbers. We observe that $G$ is the difference between the amount that you receive, $\sum_{i\in J_n} s_i(E_iH_i + p_i\no{H}_i)$, and the amount that you pay, $\sum_{i\in J_n} s_ip_i$, and represents the net gain from engaging each transaction $H_i(E_i - p_i)$, the scaling and meaning (buy or sell) of the transaction being specified by the magnitude and the sign of $s_i$, respectively.
Let $g_h$
be the value of $G$ when $C_h$ is true; then $G\in \{g_0,g_1,\ldots,g_m\}$.
Of course, $g_0 = 0$. We denote by  $\mathcal{G}_{\mathcal{H}_n}$  the set of values of $G$ restricted to $\H_n$, that is $\mathcal{G}_{\mathcal{H}_n}=\{g_1, \ldots, g_m\}$. 
Then, based on the  {\em betting  scheme} of de Finetti, we
have
\begin{definition}\label{COER-BET}  The function $P$ defined on $\mathcal{K}$ is said to be {\em coherent}
	if and only if, for every integer $n$, for every finite subfamily $\mathcal{F}$ of 
	$\mathcal{K}$ and for every  real numbers $s_1, \ldots, s_n$, one has:
	$\min  \mathcal{G}_{\mathcal{H}_n} \leq 0 \leq \max \mathcal{G}_{\mathcal{H}_n}$.
\end{definition}
Notice that the condition $\min \mathcal{G}_{\mathcal{H}_n}  \leq 0 \leq \max \mathcal{G}_{\mathcal{H}_n}$ can be written in two equivalent ways: $\min  \mathcal{G}_{\mathcal{H}_n} \leq 0$, or  $\max  \mathcal{G}_{\mathcal{H}_n} \geq 0$.  As shown by Definition \ref{COER-BET}, a probability assessment is coherent if and only if, in any finite combination of $n$ bets, it does not happen that the values $g_1, \ldots, g_m$ are all positive, or all negative ({\em no Dutch Book}).

\subsection{Coherent conditional prevision assessments}
\label{Coherence}
 Given a prevision function $\pr$ defined on an arbitrary family $\K$ of finite
conditional random quantities, consider a finite subfamily $\F = \{X_1|H_1, \ldots,X_n|H_n\} \subseteq \K$ and the vector
$\M=(\mu_1,\ldots, \mu_n)$, where $\mu_i = \pr(X_i|H_i)$ is the
assessed prevision for the conditional random quantity $X_i|H_i$, $i\in J_n$.
With the pair $(\F,\M)$ we associate the random gain $G =
\sum_{i \in J_n}s_iH_i(X_i - \mu_i)$; moreover, we denote by $\G_{\mathcal{H}_n}$ the set of values of $G$ restricted to $\H_n= H_1 \vee \cdots \vee H_n$. Then, by the {\em  betting scheme}, we
have
\begin{definition}\label{COER-RQ}{\rm
		The function $\pr$ defined on $\K$ is coherent if and only if, $\forall n
		\geq 1$,  $\forall \, \F \subseteq \K,\, \forall \, s_1, \ldots,
		s_n \in \mathbb{R}$, it holds that: $min \; \G_{\mathcal{H}_n} \; \leq 0 \leq max \;
		\G_{\mathcal{H}_n}$. }
\end{definition}
Given a family $\F = \{X_1|H_1,\ldots,X_n|H_n\}$, for each $i \in J_n$ we denote by $\{x_{i1}, \ldots,x_{ir_i}\}$ the set of possible values for the restriction of $X_i$ to $H_i$; then, for each $i \in J_n$ and $j = 1, \ldots, r_i$, we set $A_{ij} = (X_i = x_{ij})$. Of course, for each $i \in J_n$, the family $\{\no{H}_i, A_{ij}H_i \,,\; j = 1, \ldots, r_i\}$ is a partition of the sure event $\Omega$, with  $A_{ij}H_i=A_{ij}$, $\bigvee_{j=1}^{r_i}A_{ij}=H_i$. Then,
the constituents generated by the family $\F$ are (the
elements of the partition of $\Omega$) obtained by expanding the
expression $\bigwedge_{i \in J_n}(A_{i1} \vee \cdots \vee A_{ir_i} \vee
\no{H}_i)$. We set $C_0 = \no{H}_1 \cdots \no{H}_n$ (it may be $C_0 = \emptyset$);
moreover, we denote by $C_1, \ldots, C_m$ the constituents
contained in $\H_n$. Hence
$\bigwedge_{i \in J_n}(A_{i1} \vee \cdots \vee A_{ir_i} \vee
\no{H}_i) = \bigvee_{h = 0}^m C_h$.
With each $C_h,\, h \in J_ m$, we associate a vector
$Q_h=(q_{h1},\ldots,q_{hn})$, where $q_{hi}=x_{ij}$ if $C_h \subseteq
A_{ij},\, j=1,\ldots,r_i$, while $q_{hi}=\mu_i$ if $C_h \subseteq \no{H}_i$;
with $C_0$ it is associated  $Q_0=\M = (\mu_1,\ldots,\mu_n)$.
Denoting by $\I$ the convex hull of $Q_1, \ldots, Q_m$, the condition  $\M\in \I$ amounts to the existence of a vector $(\lambda_1,\ldots,\lambda_m)$ such that:
$ \sum_{h \in J_m} \lambda_h Q_h = \M \,,\; \sum_{h \in J_m} \lambda_h
= 1 \,,\; \lambda_h \geq 0 \,,\; \forall \, h$; in other words, $\M\in \I$ is equivalent to the solvability of the system $(\Sigma)$, associated with  $(\F,\M)$,
\begin{equation}\label{SYST-SIGMA}
\begin{array}{l}
(\Sigma) \;\;\;\sum_{h \in J_m} \lambda_h q_{hi} =
\mu_i \,,\; i \in J_n \,; \; \sum_{h \in J_m} \lambda_h = 1 \,;\;
\lambda_h \geq 0 \,,\;  \, h\in J_m \,.
\end{array}
\end{equation}
Given the assessment $\M =(\mu_1,\ldots,\mu_n)$ on  $\F =
\{X_1|H_1,\ldots,X_n|H_n\}$, let $S$ be the set of solutions $\Lambda = (\lambda_1, \ldots,\lambda_m)$ of system $(\Sigma)$ defined in  (\ref{SYST-SIGMA}).   
Then, the  following characterization theorem for coherent assessments on  finite families of conditional events can be proved (\cite{BiGS08})
\begin{theorem}\label{SYSTEM-SOLV}{ \rm [{\em Characterization of coherence}].
		Given a family of $n$ conditional random quantities $\F =
		\{X_1|H_1,\ldots,X_n|H_n\}$, with finite sets of possible values, and a vector $\M =
		(\mu_1,\ldots,\mu_n)$, the conditional prevision assessment
		$\prev(X_1|H_1) = \mu_1 \,,\, \ldots \,,\, \prev(X_n|H_n) =
		\mu_n$ is coherent if and only if, for every subset $J \subseteq J_n$,
		defining $\F_J = \{X_i|H_i \,,\, i \in J\}$, $\M_J = (\mu_i \,,\,
		i \in J)$, the system $(\Sigma_J)$ associated with the pair
		$(\F_J,\M_J)$ is solvable. }
	\end{theorem}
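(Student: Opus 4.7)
The plan is to reduce the statement to a one-shot (fixed-family) lemma and then apply it uniformly to every subfamily. First I would unfold Definition \ref{COER-RQ}: coherence of $\pr$ on $\F$ requires the no-Dutch-book condition $\min \G \leq 0 \leq \max \G$ to hold for \emph{every} finite subfamily $\F_J$ and every choice of stakes $(s_i)_{i\in J}$, since the definition explicitly quantifies over all finite subfamilies of the domain. So the task reduces to the following single-family claim: for a fixed pair $(\F_J, \M_J)$ with its own constituents $C_1,\ldots,C_{m_J}$ in $\bigvee_{i\in J} H_i$, the no-Dutch-book condition is equivalent to solvability of $(\Sigma_J)$, which is by construction the same as $\M_J \in \I_J = \operatorname{conv}(Q_1,\ldots,Q_{m_J})$.

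The key computation is that on constituent $C_k$ the gain equals $g_k = \sum_{i\in J} s_i(q_{ki} - \mu_i) = s^J\cdot(Q_k - \M_J)$. For the ``solvable $\Rightarrow$ no Dutch book'' direction I would pick a convex representation $\M_J = \sum_k \lambda_k Q_k$ and note $\sum_k \lambda_k g_k = s^J \cdot \sum_k \lambda_k (Q_k - \M_J) = 0$, which rules out the possibility that all $g_k$ share a strict sign and therefore yields $\min \G \leq 0 \leq \max \G$. For the converse I would argue by contrapositive: if $\M_J \notin \I_J$, a strict separating hyperplane (equivalently, Farkas' lemma) produces $s^J$ with $s^J\cdot(Q_k - \M_J) < 0$ for all $k$, i.e.\ stakes forcing $\max \G < 0$ --- a Dutch book on $\F_J$.

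The main obstacle is this strict-separation step: one must apply the separating hyperplane theorem to the closed convex polytope $\I_J$ and the external point $\M_J$, and interpret the separating functional as a vector of bets $s^J$. Everything else is essentially bookkeeping: checking $g_k = s^J\cdot(Q_k - \M_J)$ by case analysis according to whether $C_k \subseteq H_i$ or $C_k \subseteq \no{H}_i$, observing the equivalence between Definition \ref{COER-RQ} applied to $\F$ and the conjunction of no-Dutch-book conditions on all $(\F_J, \M_J)$, and recording that $\M_J \in \I_J$ is the geometric content of the system $(\Sigma_J)$ in (\ref{SYST-SIGMA}).
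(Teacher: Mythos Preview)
The paper does not prove Theorem~\ref{SYSTEM-SOLV}; it is quoted as an established result from \cite{BiGS08}, so there is no in-paper proof to compare against. Your argument is the standard and correct one. The key identity $g_k = s^J\cdot(Q_k-\M_J)$ holds exactly as you say (on $C_k\subseteq\no{H}_i$ one has $q_{ki}=\mu_i$, so $q_{ki}-\mu_i=0$ matches the factor $H_i=0$), the convex-combination direction gives $\sum_k\lambda_k g_k=0$ and hence rules out a uniform strict sign, and the contrapositive via strict separation of the point $\M_J$ from the polytope $\I_J$ yields stakes with $\max\G_{\H_J}<0$. The outer reduction --- that Definition~\ref{COER-RQ} applied to the domain $\K=\F$ quantifies exactly over the subfamilies $\F_J$, $J\subseteq J_n$ --- is also correct. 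No gap.
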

We point out that the solvability of  system $(\Sigma)$ (i.e., the condition $\M\in \I$) is a necessary (but not sufficient) condition for coherence of $\M$ on $\F$. Moreover, assuming  the system $(\Sigma)$  solvable, that is  $S \neq \emptyset$, we define:
\begin{equation}\label{EQ:I0}
\begin{array}{ll}
I_0 = \{i : \max_{\Lambda \in S} \, \sum_{h:C_h\subseteq H_i}\lambda_h= 0\},\;  \F_0 = \{X_i|H_i \,, i \in I_0\},\;  \M_0 = (\mu_i ,\, i \in I_0)\,.
\end{array}
\end{equation}
Then, the following theorem can be proved  (\cite[Theorem 3]{BiGS08})
\begin{theorem}\label{CNES-PREV-I_0-INT}{\rm [{\em Operative characterization of coherence}]
		A conditional prevision assessment ${\M} = (\mu_1,\ldots,\mu_n)$ on
		the family $\F = \{X_1|H_1,\ldots,X_n|H_n\}$ is coherent if
		and only if the following conditions are satisfied: \\
		(i) the system $(\Sigma)$ defined in (\ref{SYST-SIGMA}) is solvable;\\ (ii) if $I_0 \neq \emptyset$, then $\M_0$ is coherent. }
\end{theorem}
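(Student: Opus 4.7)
My plan is to prove necessity and sufficiency separately, in each case reducing the argument to the characterization given in Theorem~\ref{SYSTEM-SOLV}.

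For necessity, assume $\M$ is coherent on $\F$. Applying Theorem~\ref{SYSTEM-SOLV} with $J=J_n$ immediately gives (i), since $(\Sigma)=(\Sigma_{J_n})$ must be solvable. For (ii), if $I_0\neq\emptyset$, coherence of $\M$ on $\F$ specialized to the sub-family $\F_0\subseteq \F$ (via the quantification over all finite sub-families in Definition~\ref{COER-RQ}) directly yields coherence of $\M_0$ on $\F_0$.

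For sufficiency, assume (i) and (ii). By Theorem~\ref{SYSTEM-SOLV}, it is enough to show that $(\Sigma_J)$ is solvable for every $J\subseteq J_n$. If $J\subseteq I_0$, then $\F_J\subseteq \F_0$, and by (ii) together with Theorem~\ref{SYSTEM-SOLV} applied to $\M_0$, the system $(\Sigma_J)$ is solvable. If instead $J\not\subseteq I_0$, I construct a solution of $(\Sigma_J)$ from a carefully chosen solution of $(\Sigma)$. For each $i\in J_n\setminus I_0$, the definition of $I_0$ provides some $\Lambda^{(i)}\in S$ with $\sum_{h:\,C_h\subseteq H_i}\lambda_h^{(i)}>0$; since $S$ is convex, a convex combination $\Lambda^*$ of these with all positive weights lies in $S$ and satisfies $\sum_{h:\,C_h\subseteq H_i}\lambda_h^*>0$ simultaneously for every $i\in J_n\setminus I_0$. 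Fix $i^*\in J\cap(J_n\setminus I_0)$, set $A=\{h:C_h\subseteq \H_J\}$, and note $\alpha:=\sum_{h\in A}\lambda_h^*\geq \sum_{h:\,C_h\subseteq H_{i^*}}\lambda_h^*>0$. The key identity is that for $h\notin A$ the constituent $C_h$ is contained in $\no{H}_i$ for every $i\in J$, so $q_{hi}=\mu_i$ by the convention adopted after (\ref{SYST-SIGMA}); combined with $\sum_h\lambda_h^*q_{hi}=\mu_i$ and $\sum_h\lambda_h^*=1$, this yields $\sum_{h\in A}\lambda_h^*q_{hi}=\alpha\mu_i$ for every $i\in J$. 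Rescaling by $\alpha$ and then aggregating the resulting weights along the coarser partition induced by the constituents $C'_k$ of $\F_J$ contained in $\H_J$ produces the desired solution of $(\Sigma_J)$.

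The principal obstacle is the bookkeeping in the second sufficiency case: one must verify how the finer constituents of $\F$ collapse onto those of $\F_J$, that $q_{hi}$ and the aggregated coordinates $q'_{ki}$ agree whenever $C_h\subseteq C'_k$, and that the ``default'' values $q_{hi}=\mu_i$ on $C_h\subseteq \no{H}_i$ interact correctly with the renormalization step. Once the identity $\sum_{h\in A}\lambda_h^*q_{hi}=\alpha\mu_i$ is in hand, the remaining verifications (non-negativity, normalization, and matching the prevision constraints) are routine.
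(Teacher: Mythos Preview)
The paper does not supply its own proof of Theorem~\ref{CNES-PREV-I_0-INT}: the result is simply recalled from \cite[Theorem~3]{BiGS08}, so there is no in-paper argument to compare against.

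That said, your proof is sound and is essentially the standard one. Necessity is immediate from Theorem~\ref{SYSTEM-SOLV} and the hereditary nature of coherence under sub-families. For sufficiency, your two-case split is the right idea: when $J\subseteq I_0$ you invoke Theorem~\ref{SYSTEM-SOLV} on the coherent sub-assessment $\M_0$; when $J\not\subseteq I_0$ your construction is correct. The convexity of the solution set $S$ lets you build $\Lambda^*\in S$ with $\sum_{h:C_h\subseteq H_i}\lambda_h^*>0$ for all $i\notin I_0$; the key identity $\sum_{h\in A}\lambda_h^* q_{hi}=\alpha\mu_i$ follows exactly as you say from $q_{hi}=\mu_i$ whenever $C_h\subseteq\no{H}_i$; and the aggregation step is justified because the constituents of $\F$ contained in $\H_J$ refine the constituents $C'_k$ of $\F_J$ contained in $\H_J$, with $q_{hi}=q'_{ki}$ for $i\in J$ whenever $C_h\subseteq C'_k$ (both values being determined by which of $A_{ij}$ or $\no{H}_i$ contains the constituent). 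The only cosmetic remark is that the ``principal obstacle'' you flag is already fully handled by your argument; nothing further is needed.
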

 By Theorem \ref{CNES-PREV-I_0-INT}, the
following algorithm checks in a finite number of steps the coherence of the prevision 
 assessment $\M$ on $\F$.
\begin{algorithm}\label{ALG-PREV-INT}{\rm
		Let be given the pair $(\F, \M)$.
		\begin{enumerate}
			\item Construct the system $(\Sigma)$ defined in (\ref{SYST-SIGMA}) and check its
			solvability; 
			\item If the  system $(\Sigma)$  is not
			solvable then $\M$ is not coherent and the procedure stops,
			otherwise compute the set $I_0$; 
			\item If $I_0 = \emptyset$ then
			$\M$ is coherent and the procedure stops, otherwise set
			$(\F, \M) = (\F_0, M_0)$ and repeat steps
			1-3.
		\end{enumerate}
}\end{algorithm}  
By following the approach given in \cite{CoSc99,GOPS16,GiSa13c,GiSa13a,GiSa14,lad96}  a conditional random quantity $X|H$ can be seen as the random quantity $XH+\mu\no{H}$, where $\mu=\prev(X|H)$.
In particular, in numerical terms, $A|H$ is the random quantity $AH+x\no{H}$, where $x=P(A|H)$. 
Then, when $H\subseteq A$, coherence requires that $P(A|H)=1$ and hence $A|H=H+\no{H}=1$. 
Notice that, as  $\no{H}|H=0$ and $XH|H=X|H$, it holds that: $(XH+\mu\no{H})|H=XH|H+\mu\no{H}|H=X|H$, where $\mu=\prev(X|H)$.
Moreover, the negation of $A|H$ is defined as  $\no{A|H}=1-A|H=\no{A}|H$. Coherence can be characterized in terms of proper scoring rules (\cite{BiGS12,GiSa11a}), which can be related  to the notion of entropy in information theory (\cite{LSA12,LSA15}).

We recall a result  (see \cite[Theorem 4]{GiSa14})  which  shows that, given two conditional random quantities $X|H$, $Y|K$, if  $X|H= Y|K$ when $H\vee K$ is true, then $X|H= Y|K$ also when  $H\vee K$ is false, so that $X|H=Y|K$.
\begin{theorem}\label{THM:EQ-CRQ}{\rm Given any events $H\neq \emptyset$, $K\neq \emptyset$, and any r.q.'s $X$, $Y$, let $\Pi$ be the set of the coherent prevision assessments $\pr(X|H)=\mu,\pr(Y|K)=\nu$. \\
		(i) Assume that, for every $(\mu,\nu)\in \Pi$,  $X|H=Y|K$  when  $H\vee K$ is true; then   $\mu= \nu$ for every $(\mu,\nu)\in \Pi$. \\
		(ii) For every $(\mu,\nu)\in \Pi$,   $X|H=Y|K$  when  $H\vee K$ is true  if and only if $X|H= Y|K$.
}\end{theorem}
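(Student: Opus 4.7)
My plan is to reduce the question to a convex-geometric observation that follows from Theorem~\ref{SYSTEM-SOLV}: coherence of $(\mu,\nu)$ on $\{X|H,Y|K\}$ forces the assessment vector $\M=(\mu,\nu)$ to lie in the convex hull $\I$ of the points $Q_h$ associated with the constituents $C_h\subseteq H\vee K$. The hypothesis in (i) will be used to show that each such $Q_h$ sits on the diagonal $\{(a,a):a\in\mathbb{R}\}$; since the diagonal is convex, $\M$ itself must lie on the diagonal, i.e.\ $\mu=\nu$. Part (ii) then falls out of (i) almost for free.

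To carry out (i) I would fix a coherent $(\mu,\nu)\in\Pi$ and, using the identifications $X|H=XH+\mu\no{H}$ and $Y|K=YK+\nu\no{K}$, rewrite the assumption ``$X|H=Y|K$ when $H\vee K$ is true'' as three elementary conditions on the atoms of $H\vee K$: namely $X=Y$ on $HK$, $X=\nu$ on $H\no{K}$, and $Y=\mu$ on $\no{H}K$. Then I would compute $Q_h=(q_{h1},q_{h2})$ constituent by constituent, following the prescription recalled before Theorem~\ref{SYSTEM-SOLV}: on a constituent contained in $HK$ both coordinates equal the common value of $X=Y$; on one contained in $H\no{K}$ the first coordinate equals $X=\nu$ and the second equals $\nu$; and on one contained in $\no{H}K$ the first equals $\mu$ and the second equals $Y=\mu$. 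In every case $q_{h1}=q_{h2}$, so each $Q_h$ lies on the diagonal.

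Since coherence is a sufficient condition for the solvability of the system $(\Sigma)$ by Theorem~\ref{SYSTEM-SOLV} (applied with $J=J_n$), we have $\M\in\I$; but $\I$, being the convex hull of points on the diagonal, is itself contained in the diagonal, whence $\mu=\nu$, proving (i). For (ii), the nontrivial direction now drops out: once $\mu=\nu$ is known, both $X|H$ and $Y|K$ reduce to the common constant $\mu$ on the remaining constituent $C_0=\no{H}\no{K}$, so the equality given on $H\vee K$ automatically extends to the whole sample space and yields $X|H=Y|K$; the reverse implication is immediate.

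The only delicate point I foresee is the bookkeeping when some of the atoms $HK$, $H\no{K}$, $\no{H}K$ happen to be empty (for instance when $K\subseteq H$ or $H\subseteq K$): one must verify that the three-case analysis is exhaustive and that, thanks to $H\neq\emptyset$ and $K\neq\emptyset$, the event $H\vee K$ is nonempty, so there is at least one $Q_h$ and the convex-hull argument is not vacuous. This is routine but is the only thing to check beyond the main geometric observation.
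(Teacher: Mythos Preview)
Your argument is correct. The three-case analysis of the atoms $HK$, $H\no{K}$, $\no{H}K$ is sound, the conclusion that each $Q_h$ with $C_h\subseteq H\vee K$ lies on the diagonal is right, and invoking Theorem~\ref{SYSTEM-SOLV} with $J=J_n$ to place $\M$ in the convex hull $\I$ is legitimate. Part (ii) follows exactly as you say.

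The paper, however, takes a different route. It does not prove Theorem~\ref{THM:EQ-CRQ} directly (it is recalled from \cite{GiSa14}), but it proves the generalization Theorem~\ref{THM:INEQ-CRQ} in the Appendix by a direct betting argument rather than via the convex-hull characterization: one fixes the stakes $s_1=1$, $s_2=-1$, computes the gain $G=H(X-\mu)-K(Y-\nu)=(X|H-Y|K)+(\nu-\mu)$, observes that on $H\vee K$ this is at most $\nu-\mu$ by hypothesis, and concludes from $\max\G_{H\vee K}\geq 0$ that $\mu\leq\nu$. For the equality version one would run the same bet with the signs swapped to get $\nu\leq\mu$. Your geometric argument and the paper's betting argument are essentially dual: the betting inequality $\max\G_{H\vee K}\geq 0$ is the separating-hyperplane formulation of $\M\in\I$. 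The paper's approach is a bit shorter and avoids enumerating constituents; your approach makes the structure of the $Q_h$'s explicit and connects more visibly with the machinery of Section~\ref{Coherence}, which is useful if one wants to see why the result persists under logical dependencies among $H$ and $K$.
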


\subsection{Quasi conjunction, conjunction, and disjunction of two conditional events.} 
The notion of quasi conjunction plays an important role in nonmonotonic reasoning. In particular for two conditional events $A|H, B|K$ the quasi conjunction $QC(A|H,B|K)$  is the  conditional event $(\no{H}\vee A)\wedge(\no{K}\vee B)\,|\, (H \vee K)$.
Note that:  $QC(A|H,B|K)$ is true, when a conditional event is true and the other one is not false; $QC(A|H,B|K)$ is false, when a conditional event is false; $QC(A|H,B|K)$ is void, when $H\vee K$ is false. In other words,  the quasi conjunction is the conjunction of the two material conditionals $\no{H}\vee A, \no{K}\vee B$ given the disjunction of the conditioning events $H,K$. 
In numerical terms one has
\begin{equation}\label{eq:QC}
QC(A|H,B|K) =\min \, \{\no{H}\vee A, \no{K}\vee B\} \,|\, (H \vee K)
\end{equation}
and, if we replace the material conditionals $\no{H}\vee A, \no{K}\vee B$ by
the conditional events $A|H,B|K$, from formula (\ref{eq:QC}) we obtain the definition below (\cite{GiSa13a}).
\begin{definition}\label{CONJUNCTION}{\rm Given any pair of conditional events $A|H$ and $B|K$, with $P(A|H)=x$, $P(B|K)=y$, we define their conjunction
		as the conditional random quantity $(A|H) \wedge (B|K) = Z\,|\, (H \vee K)$, where $Z=\min \, \{A|H, B|K\}$.
}\end{definition}
Then, defining $z=\mathbb{P}[(A|H)\wedge(B|K)]$, we have
\begin{equation}\label{EQ:CONJUNCTION}
(A|H)\wedge(B|K) =\left\{\begin{array}{ll}
1, &\mbox{ if $AHBK$  is true,}\\
0, &\mbox{ if  $\no{A}H\vee \no{B}K$ is true,}\\
x, &\mbox{ if $\no{H}BK$ is true,}\\
y, &\mbox{ if $AH\no{K}$ is true,}\\
z, &\mbox{ if $\no{H}\no{K}$ is true}.
\end{array}
\right.
\end{equation}
\begin{remark}
We recall that  $A|H=AH+x\no{H}$, where $x=P(A|H)$. Then, by Definition \ref{CONJUNCTION}, it holds that  
\[
(A|H) \wedge (A|H)=(A|H)|H= (AH+x\no{H})|H= AH|H=A|H.
\]
\end{remark}
From (\ref{EQ:CONJUNCTION}), the conjunction  $(A|H) \wedge (B|K)$ is the following  random quantity
\begin{equation}\label{EQ:REPRES}
(A|H) \wedge (B|K)=1 \cdot AHBK + x \cdot \no{H}BK + y \cdot AH\no{K} + z \cdot \no{H}\no{K}\,.
\end{equation}
For the quasi conjunction it holds that
\begin{equation}
\label{EQ:QCREPRES}
QC(A|H,B|K)= AHBK +  \no{H}BK +   AH\no{K} + \nu  \cdot \no{H}\no{K},
\end{equation}
where $\nu=P(QC(A|H,B|K))$. We recall that, if $P(A|H)=P(B|K)=1$, then  $\nu=1$ (see, e.g.,  \cite[Section 3]{gilio13}).
We also recall a result  which shows that Fr\'echet-Hoeffding bounds still hold for the conjunction of conditional events (\cite[Theorem~7]{GiSa14}).
\begin{theorem}\label{THM:FRECHET}{\rm
		Given any coherent assessment $(x,y)$ on $\{A|H, B|K\}$, with $A,H,B$, $K$ logically independent, $H\neq \emptyset, K\neq  \emptyset$, the extension $z = \mathbb{P}[(A|H) \wedge (B|K)]$ is coherent if and only if the following  Fr\'echet-Hoeffding bounds are satisfied:  $\max\{x+y-1,0\} = z' \; \leq \; z \; \leq \; z'' =\min\{x,y\}$.
}\end{theorem}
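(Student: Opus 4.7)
The plan is to apply the geometric characterization of coherence (Theorem~\ref{SYSTEM-SOLV}) to the family $\F=\{A|H,\,B|K,\,(A|H)\wedge(B|K)\}$ with assessment $\M=(x,y,z)$. Under the logical independence hypothesis, the constituents contained in $H\vee K$ are the eight conjunctions $AHBK$, $\no{A}HBK$, $AH\no{B}K$, $\no{A}H\no{B}K$, $\no{H}BK$, $\no{H}\no{B}K$, $AH\no{K}$, $\no{A}H\no{K}$; from the representation~\eqref{EQ:REPRES} I would read off the associated points $Q_1=(1,1,1)$, $Q_2=(0,1,0)$, $Q_3=(1,0,0)$, $Q_4=(0,0,0)$, $Q_5=(x,1,x)$, $Q_6=(x,0,0)$, $Q_7=(1,y,y)$, $Q_8=(0,y,0)$, while $Q_0=(x,y,z)$ on $\no{H}\no{K}$.

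The key observation I would exploit is that $Q_5,\ldots,Q_8$ are each convex combinations of the ``corner'' points $Q_1,\ldots,Q_4$: for instance $Q_5=xQ_1+(1-x)Q_2$, $Q_6=xQ_3+(1-x)Q_4$, $Q_7=yQ_1+(1-y)Q_3$ and $Q_8=yQ_2+(1-y)Q_4$, using $x,y\in[0,1]$ from coherence of $(x,y)$. Hence the convex hull of the eight points coincides with $\mathrm{conv}\{Q_1,Q_2,Q_3,Q_4\}$, and the system~\eqref{SYST-SIGMA} associated with $(\F,\M)$ is solvable if and only if there exist $\lambda_1,\ldots,\lambda_4\geq 0$ with $\sum_h\lambda_h=1$, $\lambda_1+\lambda_3=x$, $\lambda_1+\lambda_2=y$ and $\lambda_1=z$. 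This linear system admits the unique solution $\lambda_1=z$, $\lambda_2=y-z$, $\lambda_3=x-z$, $\lambda_4=1-x-y+z$, whose non-negativity is exactly $\max\{x+y-1,0\}\leq z\leq\min\{x,y\}$. This yields necessity immediately, since solvability of $(\Sigma)$ is a necessary condition for coherence.

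For sufficiency I would invoke Theorem~\ref{CNES-PREV-I_0-INT}: once $(\Sigma)$ is solvable, it suffices to check that the set $I_0$ in~\eqref{EQ:I0} does not obstruct coherence. In the generic case $x,y\in(0,1)$ and $z$ strictly inside the Fr\'echet-Hoeffding interval, I would perturb the above basic solution by mixing in a small positive mass on $Q_5,\ldots,Q_8$, so that the total mass on constituents implying $H$, on those implying $K$, and on those implying $H\vee K$, becomes strictly positive; this forces $I_0=\emptyset$ and gives coherence.

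The hard part will be the boundary analysis when $(x,y,z)$ sits on a face of the Fr\'echet-Hoeffding prism or one of $x,y$ equals $0$ or $1$: there some $\lambda_h$ must vanish in every solution of $(\Sigma)$ and $I_0$ can be nonempty, so one has to run an additional iteration of Algorithm~\ref{ALG-PREV-INT}. In each such extremal case, however, the reduced sub-family $\F_0$ contains at most one conditional event whose coherence is either trivial or already guaranteed by coherence of $(x,y)$, which closes the argument.
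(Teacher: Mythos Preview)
Your approach is correct and mirrors exactly the method the paper employs for the analogous generalizations (Theorems~\ref{THM:FRECHETCn} and~\ref{THM:PIFOR3}); note however that the paper does not itself prove Theorem~\ref{THM:FRECHET} but merely recalls it from \cite[Theorem~7]{GiSa14}.

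One point where you make your own life harder than necessary: the sufficiency step requires no perturbation and no boundary case analysis at all. Your four ``corner'' constituents $C_1=AHBK$, $C_2=\no{A}HBK$, $C_3=AH\no{B}K$, $C_4=\no{A}H\no{B}K$ are all contained in $HK$, hence in $H$, in $K$, and in $H\vee K$. The basic solution $(\lambda_1,\lambda_2,\lambda_3,\lambda_4)=(z,\,y-z,\,x-z,\,1-x-y+z)$ already satisfies $\sum_{h:C_h\subseteq H}\lambda_h=\sum_{h:C_h\subseteq K}\lambda_h=\sum_{h:C_h\subseteq H\vee K}\lambda_h=1$, so $I_0=\emptyset$ for \emph{every} admissible $(x,y,z)$, including all boundary configurations. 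This is precisely the observation the paper exploits at the end of the proof of Theorem~\ref{THM:PIFOR3}. Your worry that ``some $\lambda_h$ must vanish in every solution of $(\Sigma)$ and $I_0$ can be nonempty'' is unfounded: vanishing of individual $\lambda_h$ is irrelevant, what matters is the total mass on each conditioning event, and that is identically~$1$ here.
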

\begin{remark}\label{REM:QCAND}
We observe that,  if $x=y=1$, then coherence requires that $z=\nu=1$ and then by  (\ref{EQ:REPRES}) and (\ref{EQ:QCREPRES}) it follows that
$(A|H) \wedge (B|K)=QC(A|H,B|K)$.
\end{remark}
We  recall now the notion of disjunction of two conditional events.
\begin{definition}\label{DEF:DISJUNCTION}{\rm Given any pair of conditional events $A|H$ and $B|K$, with $P(A|H)=x$, $P(B|K)=y$, we define their disjunction
		as  $(A|H) \vee (B|K) = W\,|\, (H \vee K)$, where $W=\max \, \{A|H, B|K\}$.
}\end{definition}
Then, defining $w=\mathbb{P}[(A|H)\vee (B|K)]$, we have
\begin{equation}\label{EQ:DISJUNCTION}
(A|H)\vee (B|K) =\left\{\begin{array}{ll}
1, &\mbox{ if  $AH\vee BK$ is true,}\\
0, &\mbox{ if $\no{A}H\no{B}K$  is true,}\\
x, &\mbox{ if $\no{H}\no{B}K$ is true,}\\
y, &\mbox{ if $\no{A}H\no{K}$ is true,}\\
w, &\mbox{ if $\no{H}\no{K}$ is true}.
\end{array}
\right.
\end{equation}
\begin{remark}
	We recall that  $A|H=AH+x\no{H}$, where $x=P(A|H)$. Then, by Definition \ref{DEF:DISJUNCTION}, it holds that  
	\[
	(A|H) \vee (A|H)=(A|H)|H= (AH+x\no{H})|H= AH|H=A|H.
	\]
\end{remark}
From (\ref{EQ:DISJUNCTION}), the disjunction $(A|H) \vee (B|K)$ is the following  random quantity
\begin{equation}
(A|H) \vee (B|K)=1 \cdot AH\vee BK + x \cdot \no{H}\no{B}K + y \cdot \no{A}H\no{K} + w \cdot \no{H}\no{K}.
\end{equation}

\section{Conjunction,  Disjunction,  and  Negation}
\label{SECT:CONJUNCTIONn}	
We now define the conjunction and the disjunction of $n$ conditional events in a progressive  way by specifying  the possible values of the corresponding conditional random quantities. Given a family of $n$ conditional events $\mathcal{F}=\{E_1|H_1,\ldots,E_n|H_n\}$, we denote by $C_0,C_1,\ldots, C_m$, with $m+1\leq 3^n$, the constituents associated with $\mathcal{F}$, where $C_0=\no{H}_1\no{H}_2\cdots \no{H}_n$.
With each $C_h$, $h=0,1,\ldots,m$,
we  associate a tripartition $(S_h',S_h'',S_h''')$ of the set $\{1,\ldots,n\}$, such that, for each $i\in\{1,\ldots,n\}$ it holds that:  $i\in S_h'$, or $i\in S_h''$, or  $i\in S_h'''$, according to whether  $C_h\subseteq E_iH_i$,  or $C_h\subseteq \no{E}_iH_i$, or $C_h\subseteq \no{H}_i$.
In other words, for each $h=0,1,\ldots,m$,  we have
\begin{equation}\label{EQ:ESSE}
\begin{array}{lll}
S_h'=\{i: C_h\subseteq E_iH_i\}, &
S_h''=\{i: C_h\subseteq \no{E}_iH_i\},  &
S_h'''=\{i: C_h\subseteq \no{H}_i\}\,.
\end{array}
\end{equation}
\begin{definition}[Conjunction of $n$ conditionals]\label{DEF:CONJUNCTIONn}
	{\rm
		Let be given a family of $n$ conditional events $\mathcal{F}=\{E_1|H_1,\ldots,E_n|H_n\}$.
		For each  non-empty subset $S$  of $\{1,\ldots,n\}$,  let $x_{S}$ be a  prevision assessment on $\bigwedge_{i\in S} (E_i|H_i)$.
		Then, the conjunction $\C(\F)=(E_1|H_1) \wedge \cdots \wedge (E_n|H_n)$ is defined as
		\begin{equation}\label{EQ:CF}
		\begin{array}{lll}
		Z_n|(H_1\vee \cdots \vee H_n)=\sum_{h=0}^{m} z_{h}C_h, & \mbox{where} &
		z_{h}=
		\left\{
		\begin{array}{llll}
		1, &\mbox{ if } S_h'=\{1,\ldots,n\},\\
		0, &\mbox{ if } S_h''\neq \emptyset, \\
		x_{S'''_h}, &\mbox{ if } S_h''=\emptyset \mbox{ and }  S_h''' \neq\emptyset\,. \\
		\end{array}
		\right.
		\end{array}
		\end{equation}
}\end{definition}
\begin{remark}\label{REM:CONJUNCTIONn}
As shown by   (\ref{EQ:CF}), the conjunction $(E_1|H_1) \wedge \cdots \wedge (E_n|H_n)$ assumes one of the following possible values:
1, when every  conditional event is true;
0, when at least one  conditional event is false;
$x_S$, when the conditional event $E_i|H_i$  is void, for every  $i\in S$, and  is true for every  $i\notin S$.
In the case $S=\{i\}$, we simply set $x_S=x_i$.
 \end{remark}
 Notice that  the notion of conjunction given in (\ref{EQ:CF}) has been already proposed, with positive probabilities for the conditioning events, in \cite{McGe89}. But,  our approach is developed in the setting of  coherence, where  conditional probabilities and conditional  previsions are primitive notions. Moreover, coherence allows to properly manage zero probabilities for conditioning events.
\begin{remark}\label{REM:PERMUTATION}
	We observe that to introduce the random quantity defined by
	formula (\ref{EQ:CF})  we need to specify in a coherent way the set of prevision assessments $\{x_S: S\subseteq \{1,2,\ldots,n\}\}$.
	In particular, when the conditioning events $H_1,\ldots, H_n$ are all false, i.e. $C_0$ is true, the associated tripartition is $(S_0',S_0'',S_0''')=(\emptyset,\emptyset,\{1,2,\ldots,n\}$) and the value of the conjunction $\C(\F)$ is its prevision
	$x_{S_0'''}=\pr[\C(\F)] $. Moreover,
	we observe that the set of the constituents $\{C_0,\ldots,C_m\}$ associated with $\F$ is invariant with respect to any permutation of the conditional events in $\F$.
	Then, the operation of conjunction introduced by  Definition \ref{DEF:CONJUNCTIONn} is
	invariant with respect to any permutation of the conditional events in $\F$.
\end{remark}
\begin{definition}\label{DEF:CONJUNCTIONFAM}
	Given two finite families of conditional events  $\F'$ and $\F''$, based on Definition \ref{DEF:CONJUNCTIONn}, we set $\C(\F')\wedge \C(\F'')=\C(\F'\cup \F'')$.
\end{definition}

\begin{proposition}\label{PROP:ASS}
The operation of conjunction is associative and commutative.
\end{proposition}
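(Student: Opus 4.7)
The plan is to reduce both properties directly to Definition~\ref{DEF:CONJUNCTIONFAM} combined with the invariance observation made in Remark~\ref{REM:PERMUTATION}. Since $\C(\F')\wedge \C(\F'')$ is defined as $\C(\F'\cup \F'')$, and set union is commutative and associative, the heavy lifting is already packaged in these definitions. What remains is only to verify that the resulting conditional random quantity does not depend on the order in which we list the conditional events appearing in the merged family.

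For commutativity, I take any two finite families $\F'$ and $\F''$ of conditional events. By Definition~\ref{DEF:CONJUNCTIONFAM},
\[
\C(\F')\wedge \C(\F'') \;=\; \C(\F'\cup \F'')\quad\text{and}\quad \C(\F'')\wedge \C(\F') \;=\; \C(\F''\cup \F').
\]
Since $\F'\cup \F'' = \F''\cup \F'$ as sets, the two right-hand sides coincide, provided that $\C(\cdot)$ depends only on the underlying set of conditional events and not on any particular enumeration. This is exactly the content of Remark~\ref{REM:PERMUTATION}: the tripartitions $(S_h',S_h'',S_h''')$ entering formula~(\ref{EQ:CF}), as well as the constituents $C_0,C_1,\ldots,C_m$, are defined in a way that is manifestly symmetric in the conditional events of $\F$, so $\C$ is permutation-invariant.

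For associativity, I take three finite families $\F_1,\F_2,\F_3$ and compute
\[
(\C(\F_1)\wedge \C(\F_2))\wedge \C(\F_3)
= \C(\F_1\cup \F_2)\wedge \C(\F_3)
= \C\bigl((\F_1\cup \F_2)\cup \F_3\bigr)
= \C(\F_1\cup \F_2\cup \F_3),
\]
by two applications of Definition~\ref{DEF:CONJUNCTIONFAM} and associativity of set union. A symmetric computation gives the same value for $\C(\F_1)\wedge (\C(\F_2)\wedge \C(\F_3))$, so both parenthesizations yield $\C(\F_1\cup \F_2\cup \F_3)$.

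The only genuine subtlety, and the step I would be most careful about, is making the permutation-invariance used above fully explicit: the prevision values $x_{S}$ appearing in~(\ref{EQ:CF}) must be interpreted as attached to the subset $S$ of conditional events (i.e.\ to $\bigwedge_{i\in S}(E_i|H_i)$), not to an ordered tuple of indices. Once this is observed, permuting or re-grouping conditional events merely relabels indices inside each tripartition $(S_h',S_h'',S_h''')$ without affecting the corresponding $z_h$, and the equalities above hold identically as conditional random quantities rather than only in prevision. With this observation in place, both commutativity and associativity follow.
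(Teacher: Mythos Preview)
Your proof is correct and matches the paper's own argument essentially line for line: both reduce commutativity and associativity of $\wedge$ to the corresponding properties of set union via Definition~\ref{DEF:CONJUNCTIONFAM}. Your added paragraph making the permutation-invariance (Remark~\ref{REM:PERMUTATION}) explicit is a welcome clarification, but it does not change the approach.
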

\begin{proof}
	Concerning the commutative property, 	let be given two finite families of conditional events  $\F'$ and $\F''$.
	As $\F'' \cup \F'=\F' \cup \F''$, it holds that $\C(\F'')\wedge \C(\F')=\C(\F''\cup \F')=\C(\F'\cup \F'')=\C(\F')\wedge \C(\F'')$.
	Concerning the associative property, 	let be given three finite families of conditional events  $\F', \F''$ and $\F'''$.	
We have 
\[
\begin{array}{ll}
[\C(\F')\wedge \C(\F'')]\wedge  \C(\F''')=\C(\F'\cup \F'')\wedge  \C(\F''')=\C(\F'\cup \F''\cup \F''')=\\
=
 \C(\F')\wedge \C(\F''\cup \F''')=\C(\F')\wedge [\C(\F'')\wedge  \C(\F''')]=\C(\F')\wedge \C(\F'')\wedge  \C(\F''').
 \end{array}
\]	\qed
\end{proof}

\begin{definition}[Disjunction of $n$ conditionals]\label{DEF:DISJUNCTIONn}
	{\rm
		Let be given a 	family of $n$ conditional events $\mathcal{F}=\{E_1|H_1,\ldots,E_n|H_n\}$. Morever, for each  non-empty subset $S$  of $\{1,\ldots,n\}$, let $y_{S}$ be a  prevision assessment on $\bigvee_{i\in S} (E_i|H_i)$.\\
		Then, the disjunction $\mathcal{D}(\F)=(E_1|H_1) \vee \cdots \vee (E_n|H_n)$ is defined as the following conditional random quantity
		\begin{equation}\label{EQ:DF}
		\begin{array}{lll}
		W_n|(H_1\vee \cdots \vee H_n)=\sum_{h=0}^{m} w_{h}C_h,
		&\mbox{where} &
		w_{h}=
		\left\{
		\begin{array}{llll}
		1, &\mbox{ if } S_h'\neq \emptyset,\\
		0, &\mbox{ if } S_h''=\{1,2,\ldots,n\}, \\
		y_{S'''_h}, &\mbox{ if } S_h'=\emptyset \mbox{ and }  S_h''' \neq\emptyset\,. \\
		\end{array}
		\right.
		\end{array}
		\end{equation}
}\end{definition}
We recall that  $S_0'''=\{1,2,\ldots,n\}$; thus $y_{S_0'''}=\pr[\bigvee_{i=1}^n (E_i|H_i )]=\prev[\D(\F)]$.
As shown by   (\ref{EQ:DF}), the disjunction $\mathcal{D}(\F)$ assumes one of the following possible values: 1, when at least one conditional event is true;
0, when every  conditional event is false;
$y_S$, when the conditional event $E_i|H_i$  is void, for every  $i\in S$, and  is false for every  $i\notin S$.

\begin{definition}\label{DEF:DISJUNCTIONFAM}
	Given two finite families of conditional events  $\F'$ and $\F''$, based on Definition \ref{DEF:DISJUNCTIONn}, we set $\D(\F')\vee \D(\F'')=\D(\F'\cup \F'')$.
\end{definition}
\begin{proposition}\label{PROP:DISJASS}
	The operation of disjunction is associative and commutative.
\end{proposition}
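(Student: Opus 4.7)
The plan is to mirror, mutatis mutandis, the proof of Proposition \ref{PROP:ASS}, exploiting Definition \ref{DEF:DISJUNCTIONFAM} in exactly the way Definition \ref{DEF:CONJUNCTIONFAM} was used for conjunction. Both commutativity and associativity will be reduced to the elementary commutativity and associativity of set union, since $\D(\F)$ is built from $\F$ viewed as a set.

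For commutativity, given two finite families $\F'$ and $\F''$, I would observe that $\F''\cup\F'=\F'\cup\F''$ and apply Definition \ref{DEF:DISJUNCTIONFAM} twice, yielding
\[
\D(\F'')\vee\D(\F')=\D(\F''\cup\F')=\D(\F'\cup\F'')=\D(\F')\vee\D(\F'').
\]
For associativity, given three finite families $\F',\F'',\F'''$, I would apply Definition \ref{DEF:DISJUNCTIONFAM} twice to the left-associated expression to obtain
\[
[\D(\F')\vee\D(\F'')]\vee\D(\F''')=\D(\F'\cup\F'')\vee\D(\F''')=\D(\F'\cup\F''\cup\F'''),
\]
and symmetrically $\D(\F')\vee[\D(\F'')\vee\D(\F''')]=\D(\F'\cup(\F''\cup\F'''))=\D(\F'\cup\F''\cup\F''')$, so the two bracketings agree.

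The only delicate point, and the place where one must be a little careful, is that the equalities above require that $\D(\F)$ depend only on the set $\F$, not on a chosen enumeration of its conditional events. This is the disjunction analogue of Remark \ref{REM:PERMUTATION}: by Definition \ref{DEF:DISJUNCTIONn} the constituents $C_0,C_1,\ldots,C_m$, the associated tripartitions $(S_h',S_h'',S_h''')$ and the prevision parameters $y_S$ indexed by subsets $S$ of the index set are all intrinsically attached to the family as a set, so $\D(\F)$ is permutation-invariant. Granting this observation, Definition \ref{DEF:DISJUNCTIONFAM} is well-posed and no further obstacle arises.
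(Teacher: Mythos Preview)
Your proof is correct and matches the paper's approach exactly: the paper simply states that the proof is analogous to that of Proposition \ref{PROP:ASS}, and your argument is precisely that analogue, reducing commutativity and associativity to the corresponding properties of set union via Definition \ref{DEF:DISJUNCTIONFAM}. Your added remark on permutation-invariance (the disjunction analogue of Remark \ref{REM:PERMUTATION}) is appropriate and, if anything, makes the well-posedness of the argument more explicit than the paper does.
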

\begin{proof}
The proof is analogous to that of Proposition \ref{PROP:ASS}.	
\qed
\end{proof}
We give below the notion of negation for the conjunction and the disjunction of a family of conditional events.
\begin{definition}\label{DEF:NEGATION}
	Given a family  of conditional events $\mathcal{F}$,
	the negations for the conjunction $\C(\F)$ and the  disjunction $\D(\F)$ are defined as
	$\no{\C(\F)}=1-\C(\F)$ and	$\no{\D(\F)}=1-\D(\F)$, respectively.
\end{definition}
Given a family of $n$ conditional events $\mathcal{F}=\{E_1|H_1,\ldots,E_n|H_n\}$, we denote by $\no{\F}$ the family $\{\no{E}_1|H_1,\ldots,\no{E}_n|H_n\}$. Of course $\no{\no{\F}}=\F$.
In the next result we show that  De Morgan's Laws are satisfied.
\begin{theorem}\label{THM:DEMORGAN}
Given a family of $n$ conditional events $\mathcal{F}=\{E_1|H_1,\ldots,E_n|H_n\}$, it holds that: \\
(i) $\no{\D(\F)}=\C(\no{\F})$, that is $\D(\F)=\no{\C(\no{\F})}$; \\
(ii) $\no{\C(\F)}=\D(\no{\F})$, that is $\C(\F)=\no{\D(\no{\F})}$. \\
\end{theorem}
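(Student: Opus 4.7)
I would prove (i) and (ii) simultaneously by induction on $n=|\F|$. The base case $n=1$ is immediate: $\C(\{E_1|H_1\}) = \D(\{E_1|H_1\}) = E_1|H_1$ and $\no{E_1|H_1} = 1-(E_1|H_1) = \no{E}_1|H_1$, so both identities collapse to $\no{E}_1|H_1 = \no{E}_1|H_1$. For the inductive step, I assume (i) and (ii) hold for every family of size strictly less than $n$; taking previsions in these identities and using linearity then yields $\no{x}_S + y_S = 1$ for every non-empty proper subset $S\subsetneq \{1,\ldots,n\}$, where $\no{x}_S = \prev[\bigwedge_{i\in S}(\no{E}_i|H_i)]$ and $y_S = \prev[\bigvee_{i\in S}(E_i|H_i)]$. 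This inherited prevision identity is the one ingredient that will drive the main argument.

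The key combinatorial observation is that replacing $\F$ by $\no{\F}$ interchanges $S_h'$ and $S_h''$ in every tripartition defined in (\ref{EQ:ESSE}), while leaving $S_h'''$ unchanged, because $C_h\subseteq \no{E}_iH_i$ iff $i\in S_h''$ and $C_h\subseteq E_iH_i$ iff $i\in S_h'$. With this in hand I compare the random quantities $1-\D(\F)$ and $\C(\no{\F})$ on each constituent $C_h$ with $h\in J_m$ using (\ref{EQ:CF}) and (\ref{EQ:DF}). In the extreme cases both sides match tautologically: if $S_h'\neq\emptyset$ then $\D(\F) = 1$ and, since the new $S_h''$ equals the old $S_h'$, also $\C(\no{\F}) = 0$; if $S_h'' = \{1,\ldots,n\}$ then $\D(\F) = 0$ and $\C(\no{\F}) = 1$. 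The only case where something must be checked is $S_h' = \emptyset$ with both $S_h'',S_h'''$ non-empty, where the two values are $1-y_{S_h'''}$ and $\no{x}_{S_h'''}$; since $C_h\subseteq \H_n$ forces at least one index outside $S_h'''$, this subset is proper, and the inductive hypothesis gives $\no{x}_{S_h'''}+y_{S_h'''}=1$, so the two sides agree.

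Both $1-\D(\F)$ and $\C(\no{\F})$ are naturally conditional random quantities on the common conditioning event $\H_n$, so the pointwise agreement on $\H_n$ established above upgrades by Theorem~\ref{THM:EQ-CRQ}(ii), applied with $H=K=\H_n$, to the identity of conditional random quantities $1-\D(\F)=\C(\no{\F})$, i.e.\ $\no{\D(\F)} = \C(\no{\F})$, which is (i); taking previsions then produces the missing prevision identity $\no{x}_{\{1,\ldots,n\}} + y_{\{1,\ldots,n\}} = 1$ at the current level, closing the induction at level $n$. Identity (ii) follows by applying (i) to $\no{\F}$ in place of $\F$ and using $\no{\no{\F}}=\F$. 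The main technical subtlety — indeed the only real obstacle — is making sure the induction is well-grounded: the prevision identity $\no{x}_{\{1,\ldots,n\}} + y_{\{1,\ldots,n\}} = 1$ is an output of step $n$, not an input, so one must verify that the case analysis invokes the hypothesis only for strictly smaller sub-families, which is precisely what the constraint $C_h\subseteq \H_n$ guarantees.
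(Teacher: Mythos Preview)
Your proof is correct and follows essentially the same approach as the paper's: induction on $n$, comparing $\no{\D(\F)}$ and $\C(\no{\F})$ constituent by constituent on $\H_n$ using the swap of $S_h'$ and $S_h''$, invoking the inductive hypothesis for the prevision identity on proper $S_h'''$, and closing with Theorem~\ref{THM:EQ-CRQ}; you then derive (ii) from (i) exactly as the paper does. The only cosmetic difference is that the paper writes out the $n=2$ case separately before the general step, which you fold directly into the induction.
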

\begin{proof}
	See Appendix \ref{SECT:APPENDIXA}.
\end{proof}
\section{Monotonicity property}
\label{SECT:MONOTONICITY}
For any given $n$ conditional events $E_1|H_1,\ldots,E_n|H_n$, we set $\mathcal{C}_n=\bigwedge_{i=1}^n(E_i|H_i)$ and
$\mathcal{D}_n=\bigvee_{i=1}^n(E_i|H_i)$. Moreover, for every non empty subset $S$ of $\{1,2,\ldots,n\}$ we set
\[
\C_{S}=\bigwedge_{i\in S}(E_i|H_i),\;\; \D_{S}=\bigvee_{i\in S}(E_i|H_i)\,.
\]
In this section, among other results, we will  show the monotonicity property of  conjunction and disjunction, that is
 $\C_{n+1}\leq \C_{n}$ and $\D_{n+1}\geq \D_{n}$, for every $n\geq 1$.\\
We first prove a preliminary result, which in particular shows that, given two conditional random quantities $X|H$, $Y|K$, if  $X|H\leq Y|K$ when $H\vee K$ is true, then $X|H\leq Y|K$ also when  $H\vee K$ is false, so that $X|H\leq Y|K$.
This result  generalizes Theorem \ref{THM:EQ-CRQ}, as the symbol $=$ is replaced by  $\leq$, and it will be used in Theorem~\ref{THM:MONOTONY}.
\begin{theorem}\label{THM:INEQ-CRQ}{\rm Given any events $H\neq \emptyset$, $K\neq \emptyset$, and any r.q.'s $X$, $Y$, let $\Pi$ be the set of the coherent prevision assessments $\pr(X|H)=\mu,\pr(Y|K)=\nu$. \\
		(i) Assume that, for every $(\mu,\nu)\in \Pi$,  $X|H\leq Y|K$  when  $H\vee K$ is true; then   $\mu\leq \nu$ for every $(\mu,\nu)\in \Pi$. \\
		(ii) For every $(\mu,\nu)\in \Pi$,   $X|H\leq Y|K$  when  $H\vee K$ is true  if and only if $X|H\leq Y|K$.
}\end{theorem}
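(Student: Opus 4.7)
My plan is to mimic the structure of the analogous equality result (Theorem \ref{THM:EQ-CRQ}) but use a one-sided Dutch book in place of a two-sided one. The key representation I will rely on is $X|H = XH + \mu\no{H}$ and $Y|K = YK + \nu\no{K}$, so that on the constituent $\no{H}\no{K}$ we simply have $X|H - Y|K = \mu - \nu$. This observation already makes part (ii) follow almost immediately from (i): the ``if'' direction is trivial, while the ``only if'' direction combines the hypothesis on $H\vee K$ with (i) (which forces $\mu\leq \nu$) and the identity above on $\no{H}\no{K}$.

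For part (i), I will argue by contradiction. Suppose there exists $(\mu,\nu)\in\Pi$ with $\mu>\nu$, while the hypothesis $X|H\leq Y|K$ on $H\vee K$ holds. Consider the pair $(\{X|H,Y|K\},(\mu,\nu))$ with betting coefficients $s_1=-1$, $s_2=+1$, which yields the random gain
\begin{equation*}
G \;=\; -H(X-\mu) + K(Y-\nu).
\end{equation*}
I will evaluate $G$ on each nonempty constituent contained in $H\vee K$: on $HK$, $G = (Y-X)+(\mu-\nu) \geq \mu-\nu>0$ because $X\leq Y$ there (hypothesis applied to the case $H,K$ both true); on $H\no{K}$, $G=\mu - X \geq \mu-\nu>0$ because $X\leq Y|K = \nu$ there; on $\no{H}K$, $G=Y-\nu \geq \mu-\nu>0$ because $\mu = X|H \leq Y$ there. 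Hence $\min\mathcal{G}_{H\vee K}>0$, contradicting Definition~\ref{COER-RQ}.

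The only delicate point, which I want to address explicitly, is that some of the three constituents $HK$, $H\no{K}$, $\no{H}K$ may be logically impossible (e.g.\ when $H\subseteq K$, or $H=K$, or $HK=\emptyset$). Since $H\neq\emptyset$ and $K\neq\emptyset$ we have $H\vee K\neq\emptyset$, so at least one of them is nonempty; and the computation above shows that each nonempty one contributes a strictly positive value of $G$, independently of whether the others are empty. Therefore the ``no Dutch book'' condition fails in every subcase, contradicting coherence of $(\mu,\nu)$ and establishing $\mu\leq\nu$. I expect this case distinction, rather than any deep computation, to be the main thing one must be careful about; everything else is a routine expansion of the betting scheme.
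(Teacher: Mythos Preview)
Your proof is correct and follows essentially the same one-sided Dutch-book argument as the paper. The only cosmetic difference is that the paper avoids your constituent-by-constituent split by observing $H(X-\mu)=X|H-\mu$ and $K(Y-\nu)=Y|K-\nu$, so that $G=(X|H-Y|K)+(\nu-\mu)\leq \nu-\mu$ uniformly on $H\vee K$, and then uses $\max\G_{H\vee K}\geq 0$ directly rather than arguing by contradiction.
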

\begin{proof}
	See Appendix \ref{SECT:APPENDIXA}.
\end{proof}
The next two results illustrate the monotonicity property of  conjunction and disjunction.
\begin{theorem}\label{THM:MONOTONY}
Given  $n+1$ arbitrary conditional events $E_1|H_1,\ldots,E_{n+1}|H_{n+1}$, with $n\geq 1$, for the conjunctions $\mathcal{C}_n$ and $\mathcal{C}_{n+1}$ it holds that  $\mathcal{C}_{n+1} \leq \mathcal{C}_{n}$.
\end{theorem}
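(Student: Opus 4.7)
The plan is to apply Theorem \ref{THM:INEQ-CRQ}(ii), viewing $\C_{n+1}$ as a conditional random quantity with conditioning event $H_1 \vee \cdots \vee H_{n+1}$ and $\C_n$ as one with conditioning event $H_1 \vee \cdots \vee H_n$; since the disjunction of these two conditioning events coincides with $H_1 \vee \cdots \vee H_{n+1}$, the theorem reduces the claim to verifying $\C_{n+1} \leq \C_n$ pointwise on every constituent implying $H_1 \vee \cdots \vee H_{n+1}$. I would carry out this verification by strong induction on $n$, adopting as an auxiliary inductive hypothesis that, for every subfamily of size at most $n$, the prevision of the associated conjunction lies in $[0,1]$. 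This auxiliary statement is needed because the value of $\C_n$ on constituents where $H_1 \vee \cdots \vee H_n$ is false equals $\pr[\C_n]$, while on constituents where only a proper subset of the $H_i$'s is false it equals a prevision assessment $x_T$ attached to a smaller subfamily.

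For the base case $n = 1$, a direct inspection of the eight constituents of $\{E_1|H_1, E_2|H_2\}$ that imply $H_1 \vee H_2$ through the representation (\ref{EQ:CONJUNCTION}) reduces each comparison with $E_1|H_1$ either to a trivial numerical identity or to $0 \leq x_j \leq 1$ for $j \in \{1, 2\}$, which holds by the coherence of the single assessment $P(E_j|H_j) = x_j$.

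For the inductive step, fix a constituent implying $H_1 \vee \cdots \vee H_{n+1}$ with associated tripartition $(S', S'', S''')$ of $\{1, \ldots, n+1\}$ in the sense of (\ref{EQ:ESSE}), and set $T' = S' \cap \{1,\ldots,n\}$, $T'' = S'' \cap \{1,\ldots,n\}$, and $T''' = S''' \cap \{1,\ldots,n\}$. Following Definition \ref{DEF:CONJUNCTIONn} I would split the argument into three cases. If $S'' \neq \emptyset$ then $\C_{n+1} = 0$, and I would check $\C_n \geq 0$ on the constituent: this holds either because $T'' \neq \emptyset$ (whence $\C_n = 0$), or, when $T'' = \emptyset$ and hence $S'' = \{n+1\}$, because the value of $\C_n$ is $1$, an $x_{T'''}$ with $\emptyset \neq T''' \subsetneq \{1,\ldots,n\}$, or $\pr[\C_n]$ when $T''' = \{1,\ldots,n\}$, each non-negative by the inductive hypothesis. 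If $S' = \{1, \ldots, n+1\}$, then $\C_{n+1} = 1 = \C_n$. Otherwise $\C_{n+1} = x_{S'''}$: when $n+1 \in S'$ one has $T''' = S''' \neq \emptyset$ and the values of $\C_{n+1}$ and $\C_n$ coincide (using, in the sub-case $T''' = \{1,\ldots,n\}$, that by coherence $x_{\{1,\ldots,n\}} = \pr[\C_n]$); when $n+1 \in S'''$ one writes $S''' = T''' \cup \{n+1\}$ and invokes the inductive monotonicity applied to the subfamily indexed by $T''' \cup \{n+1\}$ to deduce $x_{T''' \cup \{n+1\}} \leq x_{T'''}$ by taking previsions.

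The main obstacle is the bookkeeping around the degenerate case $T''' = \{1, \ldots, n\}$, namely when $H_1 \vee \cdots \vee H_n$ is false while $H_{n+1}$ is true: there $\C_n$ takes its default value $\pr[\C_n]$, and one must argue both that this default coincides with the prevision assessment $x_{\{1,\ldots,n\}}$ appearing inside the definition of $\C_{n+1}$ and that it lies in $[0,1]$. These facts follow from the strong inductive hypothesis together with the coherence of the prevision assessments $\{x_S : \emptyset \neq S \subseteq \{1, \ldots, n+1\}\}$ demanded by Definition \ref{DEF:CONJUNCTIONn}.
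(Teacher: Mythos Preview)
Your proposal is correct and follows essentially the same strategy as the paper: a case analysis on the possible values of $\C_n$ and $\C_{n+1}$ on each constituent, with the only non-trivial case being when both take a value of the form $x_T$, handled by induction together with Theorem~\ref{THM:INEQ-CRQ}. The organisation differs in two minor respects. First, you invoke Theorem~\ref{THM:INEQ-CRQ} at the outset to reduce to constituents contained in $H_1\vee\cdots\vee H_{n+1}$, whereas the paper runs the case analysis on all constituents (including $C_0$) and applies Theorem~\ref{THM:INEQ-CRQ} only at the very end, to deduce $\pr(\C_{n+1})\leq\pr(\C_n)$ once the inequality is known on $H_1\vee\cdots\vee H_{n+1}$. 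Second, you carry the auxiliary statement $\C_k\in[0,1]$ along in the same strong induction, while the paper separates this out as Theorem~\ref{THM:INTCn}, proved immediately afterwards by the same inductive scheme; your packaging is arguably tidier. One small point of bookkeeping: in your subcase $n+1\in S'''$ with $T'''=\emptyset$ (so $S'=\{1,\ldots,n\}$ and $S'''=\{n+1\}$), the inequality to check is $x_{n+1}\leq 1$, which does not literally come from ``inductive monotonicity applied to the subfamily indexed by $T'''\cup\{n+1\}$'' since $\C_{T'''}$ is undefined; you should handle this boundary explicitly (it is immediate from coherence of the single assessment $P(E_{n+1}|H_{n+1})=x_{n+1}$), just as the paper handles the analogous case~(b).
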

\begin{proof}
	See Appendix \ref{SECT:APPENDIXA}.
\end{proof}
\begin{theorem}\label{THM:MONOTONYDISJ}
Given  $n+1$ arbitrary conditional events $E_1|H_1,\ldots,E_{n+1}|H_{n+1}$, with $n\geq 1$, for the disjunctions $\mathcal{D}_n$ and $\mathcal{D}_{n+1}$ it holds that  $\mathcal{D}_{n+1} \geq \mathcal{D}_{n}$.
\end{theorem}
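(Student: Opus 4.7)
The plan is to reduce this statement to the already-established monotonicity of conjunction (Theorem~\ref{THM:MONOTONY}) via De Morgan's Laws (Theorem~\ref{THM:DEMORGAN}).

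Set $\F = \{E_1|H_1,\ldots,E_{n+1}|H_{n+1}\}$ and let $\F'$ be any subfamily of $\F$ of cardinality $n$; write $\no{\F} = \{\no{E}_1|H_1,\ldots,\no{E}_{n+1}|H_{n+1}\}$ and let $\no{\F}'$ be the corresponding subfamily of $\no{\F}$ of cardinality $n$. By Theorem~\ref{THM:DEMORGAN}(i) applied to $\F$ and to $\F'$ we have
\begin{equation*}
\D_{n+1} \;=\; \D(\F) \;=\; 1 - \C(\no{\F}), \qquad \D_{n} \;=\; \D(\F') \;=\; 1 - \C(\no{\F}').
\end{equation*}

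Now I apply Theorem~\ref{THM:MONOTONY} to the family $\no{\F}$: the conjunction of all $n+1$ negated conditional events is bounded above by the conjunction of the subfamily of size $n$, that is, $\C(\no{\F}) \leq \C(\no{\F}')$. Subtracting from $1$ reverses the inequality and yields
\begin{equation*}
\D_{n+1} \;=\; 1 - \C(\no{\F}) \;\geq\; 1 - \C(\no{\F}') \;=\; \D_{n},
\end{equation*}
which is the desired conclusion.

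The only potentially delicate point is the passage from an inequality between random quantities to the claim as formulated for the prevision-bearing objects $\D_n,\D_{n+1}$; but this is precisely what Theorem~\ref{THM:INEQ-CRQ} guarantees and what is implicitly used in the proof of Theorem~\ref{THM:MONOTONY}, so no further work is needed here. Hence the argument is essentially a one-line application of De Morgan once Theorem~\ref{THM:MONOTONY} is in hand.\qed
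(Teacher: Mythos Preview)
Your proof is correct and follows exactly the same approach as the paper: apply De Morgan's Laws (Theorem~\ref{THM:DEMORGAN}) to write $\D_{n+1}=1-\C(\no{\F}_{n+1})$ and $\D_n=1-\C(\no{\F}_n)$, then invoke the monotonicity of conjunction (Theorem~\ref{THM:MONOTONY}). The closing remark about Theorem~\ref{THM:INEQ-CRQ} is unnecessary---the inequality $\C(\no{\F})\leq\C(\no{\F}')$ from Theorem~\ref{THM:MONOTONY} is already an inequality between the random quantities themselves, so subtracting from $1$ directly gives the result---but it does no harm.
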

\begin{proof}
	Defining  $\F_{n}=\{E_1|H_1,\ldots,E_{n}|H_{n}\}$ and $\F_{n+1}=\F_n\cup \{E_{n+1}|H_{n+1}\}$, 
by   Theorems \ref{THM:DEMORGAN} and \ref{THM:MONOTONY} it holds that
	\[	
	\begin{array}{l}
	\mathcal{D}_{n+1}=	\mathcal{D}(\F_{n+1})=\no{\mathcal{C}(\no{\F}_{n+1} )}=	1-\mathcal{C}(\no{\F}_{n+1} )\geq 1-\mathcal{C}(\no{\F}_{n} )=\no{\mathcal{C}(\no{\F}_{n} )}=\mathcal{D}_{n}.
	\end{array}
	\]
	\qed
\end{proof}
The next result shows that the conjunction and the disjunction are random quantities with values in the interval $[0,1]$.
\begin{theorem}	\label{THM:INTCn}
Given  $n$ arbitrary conditional events $E_1|H_1,\ldots,E_{n}|H_{n}$, it holds that: $(i)$ $\C_n\in [0,1]$; $(ii)$ $\D_n\in [0,1]$.
\end{theorem}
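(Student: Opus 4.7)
The plan is to prove $(i)$ by strong induction on $n$ and then derive $(ii)$ from $(i)$ via De Morgan's laws.

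For $(i)$, the base case $n=1$ is immediate: $\C_1 = E_1|H_1$ takes the values $1$, $0$, and $P(E_1|H_1)$, and coherence of the probability assessment forces $P(E_1|H_1)\in [0,1]$. For the inductive step, I assume that for every family of $k<n$ conditional events the associated conjunction lies in $[0,1]$. Then, for every non-empty proper subset $S\subsetneq \{1,\ldots,n\}$, the sub-conjunction $\C_S$ is of size $|S|<n$, so $\C_S\in [0,1]$ and in particular $x_S=\prev[\C_S]\in [0,1]$. By Definition \ref{DEF:CONJUNCTIONn}, when $\mathcal{H}_n=H_1\vee\cdots\vee H_n$ is true the values taken by $\C_n$ lie in $\{0,1\}\cup\{x_S:\emptyset\neq S\subsetneq\{1,\ldots,n\}\}\subseteq [0,1]$. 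When $\mathcal{H}_n$ is false, $\C_n$ equals its prevision $x_{\{1,\ldots,n\}}=\prev[\C_n]$.

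It remains to bound $\prev[\C_n]$. Applying the characterization theorem (Theorem \ref{SYSTEM-SOLV}) to the one-element family $\{\C_n\}$, coherence of the assessment $\prev[\C_n]$ requires that $\prev[\C_n]$ be a convex combination of the values of $\C_n$ on the constituents contained in $\mathcal{H}_n$; since each such value lies in $[0,1]$, so does $\prev[\C_n]$. Thus $\C_n\in [0,1]$ on every constituent, completing the induction. For $(ii)$, by Theorem \ref{THM:DEMORGAN} and Definition \ref{DEF:NEGATION} we have $\D_n=\no{\C(\no{\F}_n)}=1-\C(\no{\F}_n)$; applying part $(i)$ to the family $\no{\F}_n$ gives $\C(\no{\F}_n)\in[0,1]$, hence $\D_n\in[0,1]$.

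The only non-routine step is the convex-hull argument bounding $\prev[\C_n]$ in the inductive step; once one recognizes that coherence of a single conditional prevision is equivalent to solvability of the corresponding system $(\Sigma)$, this is immediate from Theorem \ref{SYSTEM-SOLV}. Everything else reduces to reading off the possible values from Definition \ref{DEF:CONJUNCTIONn} and invoking the inductive hypothesis.
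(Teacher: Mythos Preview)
Your proof of $(i)$ is correct and follows essentially the same strong-induction argument as the paper: show that every value of $\C_n$ on a constituent contained in $\mathcal{H}_n$ lies in $[0,1]$ by the inductive hypothesis, and then conclude that the remaining value $\prev[\C_n]$ (taken on $C_0=\no{\mathcal{H}}_n$) also lies in $[0,1]$ since coherence forces it into the convex hull of those values. Your explicit appeal to Theorem~\ref{SYSTEM-SOLV} just makes precise what the paper states in one line.

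For $(ii)$ your route differs slightly: the paper remarks that the same inductive reasoning, rerun with Definition~\ref{DEF:DISJUNCTIONn} in place of Definition~\ref{DEF:CONJUNCTIONn}, yields $\D_n\in[0,1]$, whereas you deduce $(ii)$ directly from $(i)$ via De Morgan (Theorem~\ref{THM:DEMORGAN}) and the identity $\D_n=1-\C(\no{\F}_n)$. Your shortcut is correct and in fact mirrors the device the paper itself uses to derive Theorem~\ref{THM:MONOTONYDISJ} from Theorem~\ref{THM:MONOTONY}; it avoids repeating the induction at the cost of relying on an additional earlier result.
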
	
\begin{proof}
	See Appendix \ref{SECT:APPENDIXA}.
\end{proof}
\begin{remark}\label{REM:MONOTONY}
	From Theorem~\ref{THM:MONOTONY}, it holds that $\C_{n}\leq \C_{n-1} \leq \ldots \leq \C_1$; in particular
	$\prev(\C_{n})\leq \prev(\C_{k})$, $k=1,2,\ldots,n-1$.
	More  generally, for every non empty subset $S$ of $\{1,\ldots,n\}$, it holds that
	$\prev(\C_{n})\leq \prev(\C_S)$. In particular, $\prev(\C_{n})\leq P(E_{n}|H_{n})$. Then,
	$\prev(\C_{k})\leq \min\{\prev(\C_{k-1}),P(E_{k}|H_{k})\}$,  $k=2,3,\ldots, n $,
	and by iterating it follows
	\begin{equation}\label{EQ:MIN}
	\prev(\C_{n})\leq \min\{P(E_1|H_1), \ldots, P(E_{n}|H_{n})\}.
	\end{equation}
	Likewise, by Theorem \ref{THM:MONOTONYDISJ},	$\prev(\D_{k})\geq \max\{\prev(\D_{k-1}),P(E_{k}|H_{k})\}$,  $k=2,3,\ldots, n$, and by iterating it follows
	\begin{equation}\label{EQ:MAX}
	\prev(\D_{n})\geq \max\{P(E_1|H_1), \ldots, P(E_{n}|H_{n})\}.
	\end{equation}
\end{remark}	
\section{Coherent assessments on   $\{\C_n, E_{n+1}|H_{n+1}, \C_{n+1}\}$}
\label{SECT:TETRAEDRO}
Given any $n+1$ arbitrary conditional events $E_1|H_1,\ldots, E_{n+1}|H_{n+1}$, let us consider the conjunctions $\C_n=(E_1|H_1)\wedge\cdots \wedge (E_{n}|H_{n})$
and 
$\C_{n+1}=(E_1|H_1)\wedge\cdots \wedge (E_{n+1}|H_{n+1})$.
We set $\prev(\C_n)=\mu_n$, $\prev(\C_{n+1})=\mu_{n+1}$ and $P(E_{n+1}|H_{n+1})=x_{n+1}$.
\begin{remark}\label{REM:FRECH}
	Let us consider the points  
	\[
	Q_1=(1,1,1),\; Q_2=(1,0,0),\; Q_3=(0,1,0),\; Q_4=(0,0,0).
	\]
	We observe that the equations of the three planes containing the points $Q_1,Q_2,Q_3$, or $Q_1,Q_2,Q_4$, or $Q_1,Q_3,Q_4$, are $z=x+y-1$, or $z=x$, or $z=y$, respectively. It can be shown that a point $(x,y,z)$ belongs to the convex hull $\I$ of $Q_1,Q_2,Q_3,Q_4$  if and only if 
	\begin{equation}\label{EQ:LOWUP-FRECHET}
	(x,y) \in [0,1]^2 \,,\;\;\; \max\{x+y-1,0\} \leq z \leq \min\{x,y\} \,.
	\end{equation}
	The convex hull $\I$, which is a tetrahedron  with vertices $Q_1,Q_2,Q_3,Q_4$, is depicted in Figure \ref{FIG:IEA1}.
\end{remark}
We observe that the lower and upper bounds in (\ref{EQ:LOWUP-FRECHET})  are	the Fr\'echet-Hoeffding bounds, which characterize the next result.
\begin{figure}[tbph]
\centering
\includegraphics[width=0.7\linewidth]{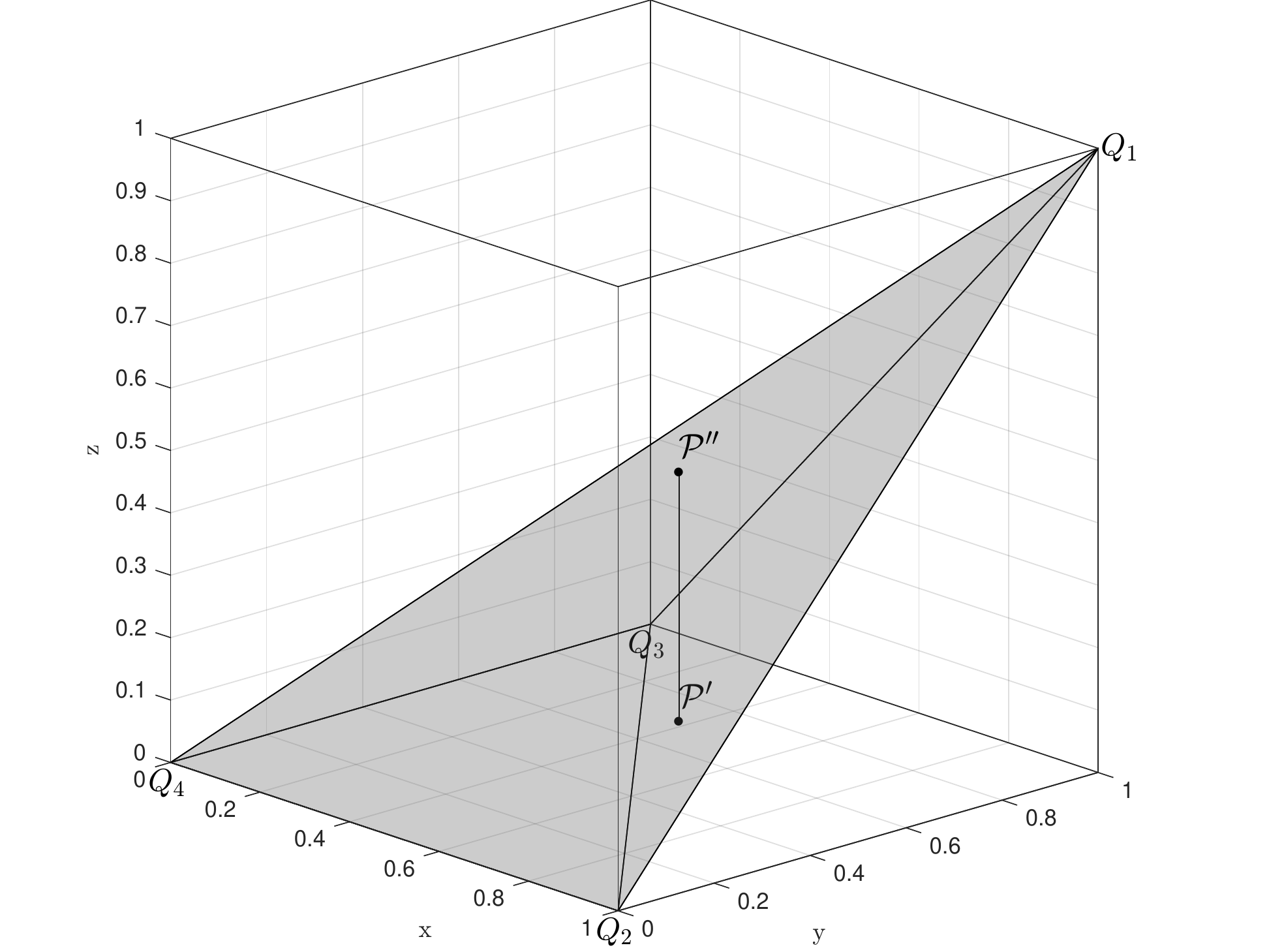}
\caption{Convex hull of  the points $Q_1, Q_2,Q_3, Q_4$.  
$\P'=(x,y,z'), \P''=(x,y,z'')$, where $(x,y)\in[0,1]^2$, $z'=\max\{x+y-1,0\}$, $z''=\min\{x,y\}$. In the figure the numerical  values are: $x=0.6$, $y=0.5$, $z'=0.1$, and  $z''=0.5$.}
\label{FIG:IEA1}
\end{figure}

\begin{theorem}\label{THM:FRECHETCn}
	Assume that the events $E_{1},\dots,E_{n+1},H_{1},\dots,H_{n+1}$ are logically independent.
	Let $\I$ be the convex hull  of the points $Q_1=(1,1,1),\; Q_2=(1,0,0),\; Q_3=(0,1,0),\; Q_4=(0,0,0)$.
	Then, the  assessment $\M=(\mu_n,x_{n+1},\mu_{n+1})$ on the family
	$\{\C_n, E_{n+1}|H_{n+1}, \C_{n+1}\}$ is coherent if and only if $\M\in \I$, that is if and only if
	\[
	(\mu_n,x_{n+1})\in[0,1]^2, \;\;\;\mu_{n+1}'
	\leq \mu_{n+1}\leq \mu_{n+1}'',
	\]
	where  $\mu_{n+1}'=\max\{\mu_n+x_{n+1}-1,0\}$ and $\mu_{n+1}''=
	\min\{\mu_n,x_{n+1}\}$.
\end{theorem}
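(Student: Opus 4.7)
The plan is to prove necessity and sufficiency of $\M \in \I$ separately, using the tools already developed in the paper.

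For necessity, Theorem~\ref{THM:INTCn} places all components of $\M$ in $[0,1]$. The upper bound $\mu_{n+1} \leq \min\{\mu_n, x_{n+1}\}$ follows from the monotonicity inequalities $\C_{n+1} \leq \C_n$ (Theorem~\ref{THM:MONOTONY}) and, by commutativity of conjunction (Proposition~\ref{PROP:ASS}) followed by a second application of Theorem~\ref{THM:MONOTONY}, $\C_{n+1} \leq E_{n+1}|H_{n+1}$; each such pointwise inequality passes to prevision through Theorem~\ref{THM:INEQ-CRQ}(i) (the statement is also recorded in Remark~\ref{REM:MONOTONY}). For the lower bound $\mu_{n+1} \geq \max\{\mu_n+x_{n+1}-1,0\}$, the part $\mu_{n+1} \geq 0$ is again from Theorem~\ref{THM:INTCn}, and the linear part $\mu_{n+1} \geq \mu_n + x_{n+1} - 1$ I would derive by passing to negations via De Morgan (Theorem~\ref{THM:DEMORGAN}): a case-by-case inspection of the constituents of $\H_{n+1}$, using~\eqref{EQ:CF} to list the values taken by $\C_n$ and $\C_{n+1}$, shows pointwise that $1 - \C_{n+1} \leq (1-\C_n) + (1 - E_{n+1}|H_{n+1})$ on $\H_{n+1}$, whence Theorem~\ref{THM:INEQ-CRQ}(i) yields $1 - \mu_{n+1} \leq (1-\mu_n) + (1-x_{n+1})$.

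For sufficiency, I would apply the convex-hull characterization of coherence preceding Theorem~\ref{SYSTEM-SOLV}, together with Algorithm~\ref{ALG-PREV-INT}. Under the logical independence hypothesis, among the constituents of $\{\C_n, E_{n+1}|H_{n+1}, \C_{n+1}\}$ that are contained in $\H_{n+1}$, four ``pure'' ones (all $H_i$ true) produce precisely the value-vectors $Q_1,Q_2,Q_3,Q_4$ of Remark~\ref{REM:FRECH}. Any $\M \in \I$ is a convex combination of $Q_1,\ldots,Q_4$, so the system $(\Sigma)$ admits a solution supported on these four pure constituents. Solvability of $(\Sigma_J)$ for every sub-family $J \subseteq \{1,2,3\}$ required by Theorem~\ref{SYSTEM-SOLV} is immediate for the singletons and for the pair $\{\C_n, E_{n+1}|H_{n+1}\}$ (coherent for every $(\mu_n,x_{n+1}) \in [0,1]^2$), while for the remaining pairs $\{\C_n,\C_{n+1}\}$ and $\{E_{n+1}|H_{n+1},\C_{n+1}\}$ it reduces to already-established monotonicity constraints and to Fr\'echet-type bounds in the spirit of Theorem~\ref{THM:FRECHET}. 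Running Algorithm~\ref{ALG-PREV-INT}: in the interior of $\I$ the set $I_0$ is empty and coherence is obtained in one step, whereas on the boundary of $\I$ the iteration reduces to a lower-dimensional face, treated by an inductive argument.

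The main obstacle is the role of the ``void'' constituents (those where some $H_i$ or $H_{n+1}$ is false), whose associated $Q_h$-vectors contain prevision values of sub-conjunctions $\C_S$ and therefore depend on the assessment itself. In principle these extra vectors could enlarge the set of reachable $\M$, but because the four pure-constituent vectors $Q_1,\ldots,Q_4$ already generate the full tetrahedron $\I$, the void constituents can safely be given weight zero in the convex combination, so they do not loosen the coherence constraint. Making this reduction precise on the boundary faces of $\I$—and verifying that the $I_0$-iteration of Algorithm~\ref{ALG-PREV-INT} remains well-behaved there—is the delicate technical point, which is handled cleanest by induction on $n$.
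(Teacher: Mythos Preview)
Your necessity argument for the lower Fr\'echet bound has a genuine gap. You claim that a case-by-case inspection on $\H_{n+1}$ establishes the pointwise inequality
\[
1-\C_{n+1}\;\leq\;(1-\C_n)+(1-E_{n+1}|H_{n+1})
\]
(equivalently $\C_n+E_{n+1}|H_{n+1}-1\leq \C_{n+1}$). But consider a constituent where $E_{n+1}|H_{n+1}$ is void and, among $E_1|H_1,\ldots,E_n|H_n$, those with index in some nonempty $S\subsetneq\{1,\ldots,n\}$ are void and the remaining ones are true. By~\eqref{EQ:CF} the three quantities take the values $\C_n=x_S$, $E_{n+1}|H_{n+1}=x_{n+1}$, $\C_{n+1}=x_{S\cup\{n+1\}}$, and your inequality becomes
\[
x_S+x_{n+1}-1\;\leq\;x_{S\cup\{n+1\}}.
\]
This is \emph{not} a pointwise fact about the constituent---it is the Fr\'echet lower bound for the smaller family $\{\C_S,\,E_{n+1}|H_{n+1},\,\C_{S\cup\{n+1\}}\}$, i.e.\ an instance of the very theorem being proved with $|S\cup\{n+1\}|\leq n$ conditional events in place of $n+1$. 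Hence the ``case-by-case'' check is circular unless it is set up as an induction on the number of conditional events. You do invoke induction, but only for the sufficiency direction; it is needed here.

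Conversely, your sufficiency argument is more complicated than necessary. Because the four vertices $Q_1,\ldots,Q_4$ correspond to constituents contained in $H_1H_2\cdots H_{n+1}$, any convex combination $\M=\sum_{j=1}^4\lambda_jQ_j$ yields a solution of $(\Sigma)$ in which $\sum_{h:C_h\subseteq H_i}\lambda_h=1$ for every conditioning event $H_i$ and for every $H_1\vee\cdots\vee H_k$ appearing in $\F$. Thus $I_0=\emptyset$ in~\eqref{EQ:I0} regardless of whether $\M$ lies in the interior or on a face of $\I$, and Theorem~\ref{CNES-PREV-I_0-INT} gives coherence immediately---no boundary analysis or iteration of Algorithm~\ref{ALG-PREV-INT} is required.

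For comparison, the paper's proof handles both directions with a single inductive claim: \emph{every} point $Q_h$ (including those coming from ``void'' constituents) lies in $\I$. The induction on $|S|$ that establishes this is exactly the one missing from your necessity argument. Once all $Q_h\in\I$, necessity follows because the convex-hull condition $\M\in\mathrm{conv}\{Q_1,\ldots,Q_m\}$ is necessary for coherence, and sufficiency follows from the same $I_0=\emptyset$ observation above together with the coherence of the three pairwise sub-assessments.
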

\begin{proof}
	See Appendix \ref{SECT:APPENDIXA}.
\end{proof}
\begin{remark}  We observe that the representation of each coherent assessment $\M=(\mu_n,x_{n+1},\mu_{n+1})$ as a linear convex combination  $\lambda_1Q_1 +\lambda_2Q_2+\lambda_3Q_3+\lambda_4Q_4$  (where $\sum_{h=1}^4{\lambda_h}=1$, $\lambda_h\geq 0$, $h=1,2,3,4$ ) is unique, with 
\[
\left\{
\begin{array}{ll}
\lambda_1=\mu_{n+1}=\prev(\C_{n+1}) \geq 0,\\
\lambda_2=\mu_n-\mu_{n+1}=\prev(\C_n) - \prev(\C_{n+1} )\geq 0,\\
\lambda_3=x_{n+1}-\mu_{n+1}=P(E_{n+1}|H_{n+1})-\prev(\C_{n+1})  \geq 0,\\
\lambda_4=1-\mu_n-x_{n+1}+\mu_{n+1} = 1 - \prev(\C_n) - P(E_{n+1}|H_{n+1})+\prev(\C_{n+1} )\geq 0 \,.
\end{array}
\right.
\]	
In particular, concerning the extreme cases $\mu_{n+1}=\mu_{n+1}'$, or $\mu_{n+1}=\mu_{n+1}''$, 
we can examine four cases: 1) $\mu_{n+1}'=\mu_n+x_{n+1} - 1 > 0$; 2) $\mu_{n+1}'=0$; \linebreak 3) $\mu_{n+1}''=\mu_n$ and 4)  $\mu_{n+1}''=x_{n+1}$. \\ In the case 1 the point  $\M=(\mu_n,x_{n+1},\mu_{n+1})$ is a linear convex combination  $\lambda_1Q_1 +\lambda_2Q_2+\lambda_3Q_3+\lambda_4Q_4$, with
$\lambda_1=\mu'_{n+1}=\mu_n+x_{n+1}-1, \lambda_2=1-x_{n+1}, \lambda_3=1-\mu_n,\lambda_4=0
$.\\
	In the case 2 it holds that
$
	\lambda_1=\mu'_{n+1}=0,
	\lambda_2=\mu_n,
	\lambda_3=x_{n+1},
	\lambda_4=1-\mu_n+x_{n+1}
$.\\
	In the case 3 it holds that
$
	\lambda_1=\mu''_{n+1}=\mu_n \,,\;
	\lambda_2=0 \,,\;
	\lambda_3=x_{n+1}-\mu_n \,,\;
	\lambda_4=1-x_{n+1}
$.\\
	In the case 4 it holds that
$
	\lambda_1=\mu''_{n+1}=x_{n+1} \,,\;
	\lambda_2=\mu_n-x_{n+1} \,,\;
	\lambda_3=0 \,,\;
	\lambda_4=1-\mu_n
$.
\end{remark}	
\section{Probabilistic Inference from $\C_{n+1}$ to $\{\C_{n},E_{n+1}|H_{n+1}\}$}
\label{SECT:BACKPROPAGATION}
In this section, given any coherent prevision assessment $\mu_{n+1}$ on $\C_{n+1}$, we find the set of coherent  extensions  $ (\mu_{n},x_{n+1})$ on $\{\C_{n}, E_{n+1}|H_{n+1}\}$.
As we will see, it is enough to  illustrate the case $n=1$, by finding the
set of coherent extensions $(x,y)$  on $\{E_1|H_1,E_2|H_2\}$ of any  assessment $z=\pr[ (E_1|H_1)\wedge (E_2|H_2)]\in[0,1]$.
\begin{theorem}\label{THM:INVFRECHET}{\rm
		Given any  prevision assessment $z$ on $(E_1|H_1)\wedge (E_2|H_2)$, with $z\in[0,1]$, with $E_1,H_1,E_2,H_2$ logically independent, with $H_1\neq \emptyset$ and $H_2\neq \emptyset$, the extension ${x= P(E_1|H_1)}$, $y=P(E_2|H_2)$ is coherent if and only if $(x,y)$ belongs to the set $T_z=\{(x,y):x\in[z,1], y\in[z,1+z-x]\}$.
}\end{theorem}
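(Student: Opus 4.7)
My plan is to reduce the statement to Theorem~\ref{THM:FRECHETCn} with $n=1$ and then algebraically invert the Fr\'echet--Hoeffding inequalities, regarding $z$ as fixed and $(x,y)$ as unknown.

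First I will observe that $(x,y)$ is a coherent extension of the prevision $z=\pr[(E_1|H_1)\wedge(E_2|H_2)]$ precisely when the joint assessment $(x,y,z)$ on the family $\{E_1|H_1,\,E_2|H_2,\,(E_1|H_1)\wedge(E_2|H_2)\}$ is coherent. In the notation of Section~\ref{SECT:TETRAEDRO} this family is $\{\C_1,E_2|H_2,\C_2\}$, so under the logical independence hypothesis Theorem~\ref{THM:FRECHETCn} (with $n=1$) applies and yields: $(x,y,z)$ is coherent if and only if $(x,y)\in[0,1]^2$ and
\[
\max\{x+y-1,\,0\}\;\leq\;z\;\leq\;\min\{x,y\}.
\]

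Next, with $z\in[0,1]$ fixed, I will invert these inequalities with respect to $(x,y)$. The upper bound $z\leq\min\{x,y\}$ is equivalent to $x\geq z$ and $y\geq z$. The lower bound $\max\{x+y-1,0\}\leq z$ simplifies to $x+y\leq 1+z$, i.e.\ $y\leq 1+z-x$, because $0\leq z$ is built into the hypothesis. Combining with $(x,y)\in[0,1]^2$ yields $x\in[z,1]$; moreover, $y\leq 1$ is automatic from $y\leq 1+z-x$ together with $x\geq z$, and the interval $[z,1+z-x]$ is non-empty exactly when $x\leq 1$. Thus the feasible region is precisely $T_z=\{(x,y):x\in[z,1],\,y\in[z,1+z-x]\}$, and the converse direction follows by reversing each implication.

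No step presents a real obstacle; the content is a purely algebraic repackaging of the Fr\'echet--Hoeffding description of the coherent region---the tetrahedron with vertices $Q_1,Q_2,Q_3,Q_4$ in the geometric remark preceding Theorem~\ref{THM:FRECHETCn}---read as a one-parameter family of horizontal slices at height $z$. As a sanity check, the slice at $z=0$ gives the triangle $\{(x,y)\in[0,1]^2:x+y\leq 1\}$, the face $Q_2Q_3Q_4$ of the tetrahedron in Figure~\ref{FIG:IEA1}, and the slice at $z=1$ collapses to the apex $Q_1=(1,1)$, in agreement with the Fr\'echet extremes.
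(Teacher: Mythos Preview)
Your proof is correct and follows essentially the same route as the paper: both reduce to the Fr\'echet--Hoeffding description of the coherent region for $\{E_1|H_1,E_2|H_2,(E_1|H_1)\wedge(E_2|H_2)\}$ and then algebraically rewrite the set $\{(x,y,z):(x,y)\in[0,1]^2,\ \max\{x+y-1,0\}\le z\le\min\{x,y\}\}$ as $\{(x,y,z):z\in[0,1],\ (x,y)\in T_z\}$. The only cosmetic difference is that the paper cites Theorem~\ref{THM:FRECHET} directly, whereas you invoke the equivalent case $n=1$ of Theorem~\ref{THM:FRECHETCn}.
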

\begin{proof}
	We recall  that, by logical independence of $E_1,H_1,E_2,H_2$, the assessment $(x,y)$ is coherent for every  $(x,y)\in [0,1]^2$. From Theorem \ref{THM:FRECHET}, the set $\Pi$ of all coherent assessment $(x,y,z)$ on $\{E_1|H_1,E_2|H_2,(E_1|H_1)\wedge (E_2|H_2)\}$ is
	$\Pi=\{(x,y,z): (x,y)\in[0,1]^2, \max\{x+y-1,0\}  \leq \; z \; \leq \; \min\{x,y\}\}$.
	We note that
	\[
	\begin{array}{lll}
	\Pi&=&\{(x,y,z): z\in[0,1],  x \in [z,1], y\in [z,z+1-x]\}=\\
	&=&\{(x,y,z): z\in[0,1],(x,y)\in T_z\}.
	\end{array}	
	\]
	Then,  $(x,y)$ is a coherent extension of $z$ if and only if
	$(x,y)\in T_z$.
	\qed
\end{proof}
\begin{figure}[tbph]
	\centering
	\includegraphics[width=0.7\linewidth]{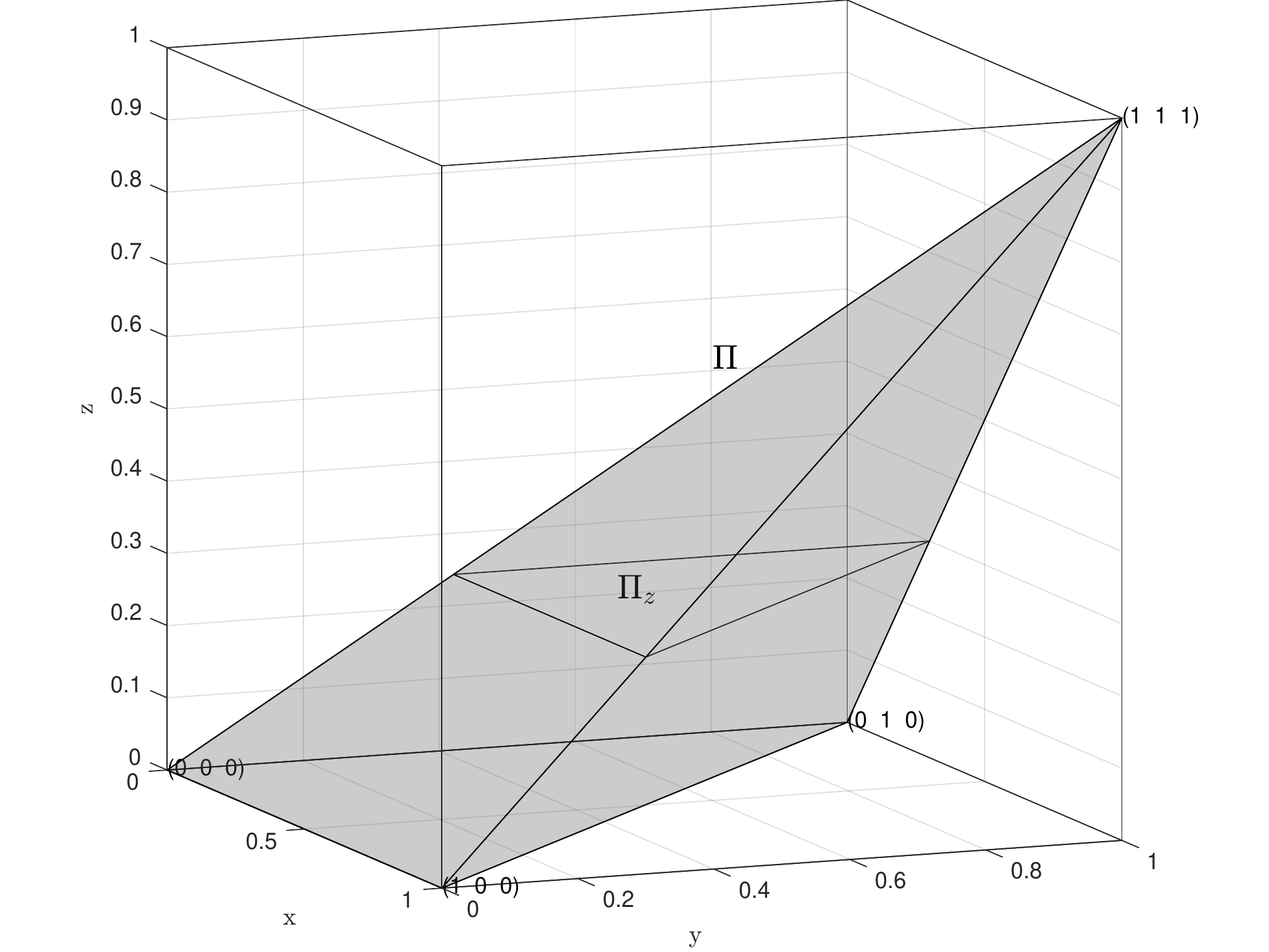}
	\caption{Set $\Pi$ of all coherent  assessments $(x,y,z)$ on $\{E_1|H_1,E_2|H_2,(E_1|H_1)\wedge (E_2|H_2)\}$. Notice that  $\Pi=\bigcup_{z\in[0,1]} \Pi_{z}$, where for each given  $z\in[0,1]$  the set $\Pi_{z}$ is the triangle $\{(x,y,z):(x,y)\in T_z\}$, with
		$T_z=\{(x,y):x\in[z,1], y\in[z,1+z-x]\}$.}
	\label{FIG:IEA2}
\end{figure}
\begin{remark}
	We observe that, given any $z\in[0,1]$ and defining $\Pi_z=\{(x,y,z): (x,y)\in T_z \}$, it holds that $\Pi=\bigcup_{z\in[0,1]}\Pi_z$ (see Figure \ref{FIG:IEA2}). The set $\Pi$ is the tetrahedron depicted in Figure \ref{FIG:IEA1}.
	Hence, contrarily to the general case, for the family  $\{E_1|H_1,E_2|H_2,(E_1|H_1)\wedge (E_2|H_2)\}$ the set of coherent prevision assessments $\Pi$ is convex. Indeed, $\Pi$ is also  the (convex) set of coherent probability assessment $(x,y,z)$ on the family of unconditional events $\{E_1,E_2,E_1E_2\}$. We recall that, assuming $H_1\wedge H_2=\emptyset$, the set of coherent prevision assessments $(x,y,z)$ on $\{E_1|H_1,E_2|H_2,(E_1|H_1)\wedge (E_2|H_2)\}$ is the surface  $\{(x,y,z): (x,y)\in[0,1]^2,  z =xy\}$,
	which is a strict non-convex subset of $\Pi$ (see \cite[Section 5]{GiSa13a}).
\end{remark}
\begin{theorem}\label{THM:INVFRECHETn}{\rm
		Given any  prevision assessment $\mu_{n+1}=\prev(\C_{n+1})\in[0,1]$, with $\mu_{n+1}\in[0,1]$, 
		the extension ${\mu_n= \prev(\C_n)}$, $x_{n+1}=P(E_{n+1}|H_{n+1})$ is coherent if and only if 
		\[
		(\mu_n,x_{n+1})\in \{(\mu_n,x_{n+1}):\mu_n\in[\mu_{n+1},1], x_{n+1}\in[\mu_{n+1},1+\mu_{n+1}-\mu_n]\}.
		\]
}\end{theorem}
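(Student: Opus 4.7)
The proof is essentially a projection/slicing of the tetrahedral characterization provided by Theorem~\ref{THM:FRECHETCn}. My plan is to observe that, by a standard coherence-extension argument, the pair $(\mu_n, x_{n+1})$ is a coherent extension of $\mu_{n+1}$ on $\{\C_n, E_{n+1}|H_{n+1}\}$ if and only if the full assessment $(\mu_n, x_{n+1}, \mu_{n+1})$ on $\{\C_n, E_{n+1}|H_{n+1}, \C_{n+1}\}$ is coherent. Under the (implicit) logical independence hypothesis inherited from Theorem~\ref{THM:FRECHETCn}, the latter is characterized by membership in the tetrahedron $\I$ with vertices $Q_1,Q_2,Q_3,Q_4$, that is by
\[
(\mu_n, x_{n+1}) \in [0,1]^2, \quad \max\{\mu_n + x_{n+1} - 1,\, 0\} \;\leq\; \mu_{n+1} \;\leq\; \min\{\mu_n,\, x_{n+1}\}.
\]

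I would then fix $\mu_{n+1} \in [0,1]$ and rewrite these constraints as conditions on $(\mu_n,x_{n+1})$. The upper bound $\mu_{n+1} \leq \min\{\mu_n,x_{n+1}\}$ splits into $\mu_n \geq \mu_{n+1}$ and $x_{n+1} \geq \mu_{n+1}$ (which, combined with $\mu_n, x_{n+1} \in [0,1]$, gives $\mu_n \in [\mu_{n+1},1]$ and $x_{n+1} \geq \mu_{n+1}$). The lower bound, since $0 \leq \mu_{n+1}$ automatically, reduces to $\mu_n + x_{n+1} - 1 \leq \mu_{n+1}$, i.e. $x_{n+1} \leq 1+\mu_{n+1}-\mu_n$. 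Putting these together yields exactly the region $\{(\mu_n,x_{n+1}) : \mu_n \in [\mu_{n+1},1],\, x_{n+1}\in[\mu_{n+1},\,1+\mu_{n+1}-\mu_n]\}$ claimed in the statement (note that $1+\mu_{n+1}-\mu_n \leq 1$ whenever $\mu_n \geq \mu_{n+1}$, so the upper endpoint $1$ of $[0,1]$ never binds).

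In fact, this completely parallels the argument of Theorem~\ref{THM:INVFRECHET}, with $\C_n$ replacing $E_1|H_1$ and $E_{n+1}|H_{n+1}$ replacing $E_2|H_2$. The key enabler is Theorem~\ref{THM:FRECHETCn}, which says that the Fr\'echet--Hoeffding characterization of the set of coherent extensions $\mu_{n+1}$ on $\C_{n+1}$ given $(\mu_n,x_{n+1})$ takes exactly the same form as in the two--conditional--events case; once that is granted, the projection argument is purely geometric. I do not anticipate any real obstacle; the only point worth being careful about is to make sure the extreme cases (boundary of the tetrahedron) are treated correctly, which amounts to checking that equality in the Fr\'echet--Hoeffding bounds corresponds to membership on the faces of $\I$ and is therefore still coherent, as already described in the remark following Theorem~\ref{THM:FRECHETCn}.
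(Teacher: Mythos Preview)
Your proposal is correct and follows essentially the same approach as the paper: both invoke Theorem~\ref{THM:FRECHETCn} to characterize the full set $\Pi$ of coherent assessments $(\mu_n,x_{n+1},\mu_{n+1})$ by the Fr\'echet--Hoeffding inequalities, then slice $\Pi$ at the fixed value $\mu_{n+1}$ and rewrite the resulting constraints in the form $\mu_n\in[\mu_{n+1},1]$, $x_{n+1}\in[\mu_{n+1},1+\mu_{n+1}-\mu_n]$. The paper makes the parallel with Theorem~\ref{THM:INVFRECHET} explicit in the same way you do.
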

\begin{proof}
From Theorem \ref{THM:FRECHETCn}, the set $\Pi$ of all coherent assessment $(\mu_n,x_{n+1},\mu_{n+1})$ on $\{\C_n,E_{n+1}|H_{n+1},\C_{n+1}\}$ is
	$\Pi=\{(\mu_n,x_{n+1},\mu_{n+1}): (\mu_n,x_{n+1})\in[0,1]^2, \max\{\mu_{n}+x_{n+1}-1,0\}  \leq \; \mu_{n+1} \; \leq \; \min\{\mu_n,x_{n+1}\}\}$. Moreover, as observed in the proof of Theorem  \ref{THM:INVFRECHET}, the set $\Pi$ coincides with the set
	\[
	\{(\mu_n,x_{n+1},\mu_{n+1}): \mu_{n+1}\in[0,1],  \mu_n\in[\mu_{n+1},1], x_{n+1}\in[\mu_{n+1},1+\mu_{n+1}-\mu_n]\}.
	\]
Then,  $(\mu_n,x_{n+1})$ is a coherent extension of $\mu_{n+1}$ if and only if $(\mu_n,x_{n+1})$ belongs to the set $\{(\mu_n,x_{n+1}):\mu_n\in[\mu_{n+1},1], x_{n+1}\in[\mu_{n+1},1+\mu_{n+1}-\mu_n]\}$.
	\qed
\end{proof}
\section{Fr\'echet-Hoeffding Bounds}
\label{SECT:FRECHET}
In the next result we show that the prevision of the conjunction $\C_n=E_1|H_1\wedge \cdots \wedge E_n|H_n  $ satisfies the  Fr\'echet-Hoeffding bounds.
\begin{theorem}\label{THM:LUKMIN}
Let be given $n$ conditional events $E_1|H_1, E_2|H_2, \ldots, E_n|H_n$, with $x_i=P(E_i|H_i)$, $i=1,2,\ldots, n$, and with $\prev(\C_n)=\mu_n$. Then 
\begin{equation}\label{EQ:LUKMIN}
\max\{x_{1}+\cdots +x_{n}-(n-1),0\}\; \leq\; \mu_n \;\leq\; \min\{x_1,\ldots,x_n\}\,.
\end{equation}
\end{theorem}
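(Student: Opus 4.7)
The plan is to prove the upper and lower bounds separately. The upper bound $\mu_n \leq \min\{x_1, \ldots, x_n\}$ requires no new work: it is exactly the content of inequality~(\ref{EQ:MIN}) in Remark~\ref{REM:MONOTONY}, which was itself obtained by iterating the monotonicity property $\prev(\C_k) \leq \prev(\C_{k-1})$ together with $\prev(\C_k) \leq P(E_k|H_k)$, both consequences of Theorem~\ref{THM:MONOTONY}.

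For the lower bound, I would argue by a short induction that telescopes the two-term Fréchet bound. Given the assessment $(\mu_{k-1}, x_k, \mu_k)$ on $\{\C_{k-1}, E_k|H_k, \C_k\}$, Theorem~\ref{THM:FRECHETCn} applied with $n$ replaced by $k-1$ gives, as a necessary condition for coherence, $\mu_k \geq \max\{\mu_{k-1} + x_k - 1,\, 0\}$. Iterating this bound from $k=2$ up to $k=n$ yields
\[
\mu_n \;\geq\; \mu_{n-1} + x_n - 1 \;\geq\; \mu_{n-2} + x_{n-1} + x_n - 2 \;\geq\; \cdots \;\geq\; x_1 + \cdots + x_n - (n-1).
\]
Combining this with $\mu_n \geq 0$, which follows from part~(i) of Theorem~\ref{THM:INTCn} (so $\C_n \in [0,1]$ and hence its prevision is nonnegative), we obtain $\mu_n \geq \max\{x_1 + \cdots + x_n - (n-1),\, 0\}$, as required.

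The main subtlety I anticipate is that Theorem~\ref{THM:FRECHETCn} is stated under the assumption that $E_1, \ldots, E_n, H_1, \ldots, H_n$ are logically independent, whereas Theorem~\ref{THM:LUKMIN} carries no such hypothesis. This is not a real obstacle, however, because the implication we use is only ``coherent implies the Fréchet inequalities''; in the presence of logical dependencies the set of coherent assessments is a subset of the one described in Theorem~\ref{THM:FRECHETCn}, so the inequality $\mu_k \geq \mu_{k-1} + x_k - 1$ still holds. A single sentence noting this inclusion suffices to justify the iteration in full generality, and no further computation is needed.
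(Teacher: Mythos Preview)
Your proof is correct and follows essentially the same route as the paper: the paper likewise iterates the lower Fr\'echet bound from Theorem~\ref{THM:FRECHETCn} to obtain $\mu_n \geq x_1+\cdots+x_n-(n-1)$, and then invokes inequality~(\ref{EQ:MIN}) and Theorem~\ref{THM:INTCn} to complete both bounds. Your additional remark that logical dependencies can only shrink the set of coherent assessments (so the necessary direction of Theorem~\ref{THM:FRECHETCn} remains valid without the logical-independence hypothesis) is a worthwhile clarification that the paper leaves implicit.
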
	
\begin{proof}
 From Theorem~\ref{THM:FRECHETCn}, it holds that
\[
\mu_n \geq \mu_{n-1}+x_{n}-1 \geq \mu_{n-2}+x_{n-1}+x_{n}-2 \geq \cdots \geq x_{1}+\cdots +x_{n}-(n-1).
\]	
Then, by inequality	(\ref{EQ:MIN}) and by Theorem~\ref{THM:INTCn} it holds that 
the inequalities in (\ref{EQ:LUKMIN}) are satisfied.
\qed
\end{proof}
Likewise, the following result holds for the prevision $\eta_n$ of  the disjunction $\D_n=E_1|H_1\vee E_2|H_2 \vee \cdots\vee E_n|H_n$. 
\begin{theorem}
	Let be given $n$ conditional events $E_1|H_1, E_2|H_2, \ldots, E_n|H_n$, with $x_i=P(E_i|H_i)$, $i=1,2,\ldots, n$, and with $\prev(\D_n)=\eta_n$. Then 
	\begin{equation}\label{EQ:MAXMIN}
	\max\{x_1,\ldots,x_n\}\; \leq\; \eta_n \;\leq\; \min\{ x_{1}+\cdots +x_{n},1\} \,.
	\end{equation}
\end{theorem}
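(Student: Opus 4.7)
The plan is to obtain both bounds essentially for free from results already established: the lower bound from the monotonicity of disjunction (Remark~\ref{REM:MONOTONY}), and the upper bound by dualizing Theorem~\ref{THM:LUKMIN} via De Morgan's Laws (Theorem~\ref{THM:DEMORGAN}).

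For the lower bound, I would simply invoke inequality~(\ref{EQ:MAX}) in Remark~\ref{REM:MONOTONY}, which already states $\eta_n=\prev(\D_n)\geq \max\{P(E_1|H_1),\ldots,P(E_n|H_n)\}=\max\{x_1,\ldots,x_n\}$, so nothing more is needed there.

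For the upper bound, the key idea is to pass through the conjunction of the negated family. Set $\no{\F}=\{\no{E}_1|H_1,\ldots,\no{E}_n|H_n\}$ with $P(\no{E}_i|H_i)=1-x_i$, and let $\mu_n^{\star}=\prev[\C(\no{\F})]$. By Theorem~\ref{THM:DEMORGAN}(i),
\[
\D_n=1-\C(\no{\F}),\qquad \text{hence}\qquad \eta_n=1-\mu_n^{\star}.
\]
Applying the lower Fr\'echet-Hoeffding bound from Theorem~\ref{THM:LUKMIN} to the family $\no{\F}$ gives
\[
\mu_n^{\star}\;\geq\;\max\bigl\{(1-x_1)+\cdots+(1-x_n)-(n-1),\,0\bigr\}\;=\;\max\bigl\{1-(x_1+\cdots+x_n),\,0\bigr\},
\]
so $\eta_n=1-\mu_n^{\star}\leq 1-\max\{1-(x_1+\cdots+x_n),0\}=\min\{x_1+\cdots+x_n,\,1\}$, which is exactly the desired upper bound. (The trivial bound $\eta_n\leq 1$, alternatively available from Theorem~\ref{THM:INTCn}(ii), is already absorbed into the $\min$.)

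There is no real obstacle: the whole argument is a two-line application of De Morgan together with the already-proved conjunctive case. The only thing worth double-checking is that Theorem~\ref{THM:LUKMIN} applies to $\no{\F}$ with no extra hypothesis---which it does, since its statement holds for an arbitrary family of $n$ conditional events, and $\no{E}_i|H_i$ are conditional events of the same form.
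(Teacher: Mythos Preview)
Your proof is correct and follows essentially the same route as the paper: the lower bound comes from (\ref{EQ:MAX}) in Remark~\ref{REM:MONOTONY}, and the upper bound is obtained by applying De Morgan (Theorem~\ref{THM:DEMORGAN}) together with the lower Fr\'echet--Hoeffding bound of Theorem~\ref{THM:LUKMIN} to the negated family $\no{\F}$. The only cosmetic difference is that you extract the bound $\eta_n\leq 1$ directly from the $\max\{\cdot,0\}$ in Theorem~\ref{THM:LUKMIN}, whereas the paper obtains it separately via Theorem~\ref{THM:INTCn}.
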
	
\begin{proof}
By  Definition~\ref{DEF:NEGATION},
Theorems \ref{THM:DEMORGAN} and \ref{THM:LUKMIN}, defining $\no{\F}_{n}=\{\no{E}_1|H_1, \no{E}_2|H_2, \ldots, \no{E}_n|H_n\}$ it holds that 
	\[
	\begin{array}{ll}
	\prev(\D_n)=1-\prev(\no{\D}_n)=1-\prev(\C(\no{\F}_{n}) )=1-\prev(\bigwedge_{i=1}^n (\no{E_i}|H_i) )\;\;\leq \\ \leq\;\;
1-[(1-x_1)+\cdots +(1-x_n)-(n-1)] =
	x_1+\cdots+x_n .
	\end{array}
	\]
	Then, by (\ref{EQ:MAX}) and by Theorem~\ref{THM:INTCn}, 
the inequalities in (\ref{EQ:MAXMIN}) are satisfied.
\qed	
\end{proof}
\section{Conjunction of Three Conditional Events}\label{SECT:CONJUNCTION3}
Given a family of three conditional events $\F=\{E_1|H_1, E_2|H_2, E_3|H_3$\}, we set $P(E_i|H_i) = x_i$, $i=1,2,3$, $\mathbb{P}[(E_i|H_i)\wedge(E_j|H_j)]=x_{ij}=x_{ji}$, $i\neq j$, and  $x_{123}=\mathbb{P}[(E_1|H_1) \wedge (E_2|H_2)\wedge (E_3|H_3)]$.	
Then, by Definition~\ref{DEF:CONJUNCTIONn}, the conjunction of $E_1|H_1, E_2|H_2, E_3|H_3$ is the conditional random quantity 
\begin{equation}\label{EQ:CONJUNCTION3}
\small
\C(\F)=	(E_1|H_1) \wedge (E_2|H_2)\wedge (E_3|H_3)=
\left\{
\begin{array}{llll}
1, &\mbox{ if } E_1H_1E_2H_2E_3H_3 \mbox{ is true}\\
0, &\mbox{ if } \no{E}_1H_1 \vee \no{E}_2H_2 \vee \no{E}_3H_3 \mbox{ is true},\\
x_1,& \mbox{ if } \no{H}_1E_2H_2E_3H_3 \mbox{ is true},\\
x_2,& \mbox{ if } \no{H}_2E_1H_1E_3H_3 \mbox{ is true},\\
x_3, &\mbox{ if } \no{H}_3E_1H_1E_2H_2 \mbox{ is true}, \\
x_{12}, &\mbox{ if } \no{H}_1\no{H}_2E_3H_3 \mbox{ is true}, \\
x_{13}, &\mbox{ if } \no{H}_1\no{H}_3E_2H_2 \mbox{ is true}, \\
x_{23}, &\mbox{ if } \no{H}_2\no{H}_3E_1H_1 \mbox{ is true}, \\
x_{123}, &\mbox{ if } \no{H}_1\no{H}_2\no{H}_3 \mbox{ is true}. \\
\end{array}
\right.
\end{equation}
\begin{remark}
	Notice that in the betting scheme $x_{123}$ is the quantity to be paid in order to receive $\C(\F)$.
	Assuming that  the assessment $(x_1,x_2,x_3,x_{12},x_{13},x_{23})$ on $\{E_1|H_1,E_2|H_2,E_3|H_3,(E_1|H_1)\wedge(E_2|H_2),(E_1|H_1)\wedge(E_3|H_3),(E_2|H_2)\wedge(E_3|H_3)\}$ is coherent, we are interested in finding the values  $x_{123}$ which are a coherent extension of $(x_1,x_2,x_3,x_{12},x_{13},x_{23})$. Of course,  as $x_i\in[0,1]$, $i=1,2,3$,  and $x_{ij}\in [0,1]$, $i\neq j$,   a necessary condition for coherence  is   $x_{123}\in [0,1]$.
\end{remark}
From  Remark \ref{REM:PERMUTATION} and  Proposition \ref{PROP:ASS}   the conjunction $\C(\F)$ is invariant with  respect to 	any given permutation $(i_1,i_2,i_3)$ of $(1,2,3)$; that is  $\C(\F)=(E_{i_1}|H_{i_1}) \wedge (E_{i_2}|H_{i_2}))\wedge (E_{i_3}|H_{i_3})$, for any permutation $(i_1,i_2,i_3)$ of $(1,2,3)$.

\subsection{Study of Coherence}
Notice that in general, if  we  assess  the values $x_{S}=\prev(\C_{S})$ for some $S\subset \{1,2\ldots,n\}$, then the study of coherence may be very complex. In this section  we study coherence in the case $n=3$ when we assess the prevision $x_{S}=\prev(\C_{S})$ for every $S\subseteq\{1,2,3\}$.
In the next result we determine the set  of  coherent assessments $\mathcal{M}=(x_1,x_2,x_3,x_{12},x_{13},x_{23},x_{123})$ on the family
$\F=\{E_1|H_1,E_2|H_2,E_3|H_3, (E_1|H_1)\wedge (E_2|H_2),$ $(E_1|H_1)\wedge (E_3|H_3),
(E_2|H_2)\wedge (E_3|H_3)$, ${(E_1|H_1)\wedge (E_2|H_2)\wedge (E_3|H_3)}\}=\{\C_{S}: \emptyset \neq S \subseteq \{1,2,3\}\}$.

\label{SECT:PI}
\begin{theorem}\label{THM:PIFOR3}
Assume that  the events $E_1, E_2, E_3, H_1, H_2, H_3$  are logically independent, with $H_1\neq \emptyset, H_2\neq \emptyset, H_3\neq \emptyset$.
Then,	the set $\Pi$ of all coherent assessments  $\mathcal{M}=(x_1,x_2,x_3,x_{12},x_{13},x_{23},x_{123})$ on
$\F=\{E_1|H_1,E_2|H_2,E_3|H_3, (E_1|H_1)\wedge (E_2|H_2),$ $(E_1|H_1)\wedge (E_3|H_3),
(E_2|H_2)\wedge (E_3|H_3)$, ${(E_1|H_1)\wedge (E_2|H_2)\wedge (E_3|H_3)}\}$ is the set of points $(x_1,x_2,x_3,x_{12},x_{13},x_{23},x_{123})$ 
which satisfy the following  conditions
\begin{equation}
\label{EQ:SYSTEMPISTATEMENT}
\left\{
\begin{array}{l}
(x_1,x_2,x_3)\in[0,1]^3,\\
\max\{x_1+x_2-1,x_{13}+x_{23}-x_3,0\}\leq x_{12}\leq \min\{x_1,x_2\},\\
\max\{x_1+x_3-1,x_{12}+x_{23}-x_2,0\}\leq x_{13}\leq \min\{x_1,x_3\},\\
\max\{x_2+x_3-1,x_{12}+x_{13}-x_1,0\}\leq x_{23}\leq \min\{x_2,x_3\},\\
1-x_1-x_2-x_3+x_{12}+x_{13}+x_{23}\geq 0,\\
%x_{123}'\leq x_{123}\leq x_{123}''.
x_{123}\geq \max\{0,x_{12}+x_{13}-x_1,x_{12}+x_{23}-x_2,x_{13}+x_{23}-x_3\},\\
x_{123}\leq  \min\{x_{12},x_{13},x_{23},1-x_1-x_2-x_3+x_{12}+x_{13}+x_{23}\}.
\end{array}
\right.
\end{equation}
\end{theorem}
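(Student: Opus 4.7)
The plan is to apply the operative characterization of coherence (Theorems \ref{SYSTEM-SOLV} and \ref{CNES-PREV-I_0-INT}) to the family $\F=\{\C_S : \emptyset \neq S \subseteq \{1,2,3\}\}$. Under logical independence the constituents of $\F$ are the $27$ tripartitions $C_h$ indexed by $(s_1,s_2,s_3)$ with $s_i \in \{E_iH_i, \no{E}_iH_i, \no{H}_i\}$; the value vector $Q_h$ of the seven quantities $\C_S$ on each such constituent is read off Definition \ref{DEF:CONJUNCTIONn} and display \eqref{EQ:CONJUNCTION3}. Coherence of $\M$ then reduces to the existence of a nonnegative $\Lambda$ with $\sum_h \lambda_h = 1$, $\sum_h \lambda_h Q_h = \M$, and $I_0 = \emptyset$ along the iteration of Algorithm \ref{ALG-PREV-INT}.

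For necessity, I would derive each block of inequalities in \eqref{EQ:SYSTEMPISTATEMENT} from coherence of a suitable sub-family. The bounds $(x_1,x_2,x_3) \in [0,1]^3$ and $\max\{x_i+x_j-1, 0\} \leq x_{ij} \leq \min\{x_i,x_j\}$ come from Theorem \ref{THM:FRECHET} applied to single events and pairs. The two-sided bound $\max\{x_{ij}+x_k-1,0\} \leq x_{123} \leq \min\{x_{ij}, x_k\}$ follows from Theorem \ref{THM:FRECHETCn} applied to $\{\C_{ij}, E_k|H_k, \C_{123}\}$, and combined with the monotonicity $x_{123} \leq x_{ij}$ (Theorem \ref{THM:MONOTONY}) it accounts for the upper $\min\{x_{12},x_{13},x_{23}\}$ bound. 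The remaining cross lower bounds on $x_{123}$ (namely $x_{123} \geq x_{ij}+x_{ik}-x_i$), the cross upper bound $x_{123} \leq 1-\sum_i x_i+\sum_{i<j} x_{ij}$, and the induced inequalities $x_{ij} \geq x_{ik}+x_{jk}-x_k$ and $1-\sum_i x_i+\sum_{i<j} x_{ij} \geq 0$, are obtained by an LP-duality argument: they correspond to facet-defining inequalities of the polytope spanned by the value vectors $Q_h$, each tied to the nonnegativity of a specific dual variable in $(\Sigma)$.

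For sufficiency, the key step is to construct an explicit nonnegative solution of $(\Sigma)$ concentrated on the eight constituents $C_{abc}$ with $H_1H_2H_3$ true (so $a\in\{0,1\}^3$). Define the weights via inclusion--exclusion:
\begin{equation*}
\begin{array}{l}
\lambda_{111}=x_{123}, \quad \lambda_{110}=x_{12}-x_{123}, \quad \lambda_{101}=x_{13}-x_{123}, \quad \lambda_{011}=x_{23}-x_{123},\\
\lambda_{100}=x_1-x_{12}-x_{13}+x_{123}, \quad \lambda_{010}=x_2-x_{12}-x_{23}+x_{123},\\
\lambda_{001}=x_3-x_{13}-x_{23}+x_{123}, \quad \lambda_{000}=1-\textstyle\sum_{i} x_i+\sum_{i<j} x_{ij}-x_{123},
\end{array}
\end{equation*}
with $\lambda_h=0$ on the remaining $19$ constituents. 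A direct algebraic check yields $\sum_{abc}\lambda_{abc}Q_{abc}=\M$ and $\sum_{abc}\lambda_{abc}=1$, and the eight nonnegativity requirements $\lambda_{abc}\geq 0$ match precisely the eight $x_{123}$-bounds in \eqref{EQ:SYSTEMPISTATEMENT}. Since every $C_{abc}$ is contained in $H_1H_2H_3\subseteq \bigvee_{i\in S}H_i$, every conditioning event receives full $\lambda$-mass and $I_0=\emptyset$, so coherence follows from Theorem \ref{CNES-PREV-I_0-INT}.

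The principal technical obstacle lies in the necessity half, specifically in justifying the cross inequalities (those not of Fr\'echet--Hoeffding type) without appealing circularly to the sufficiency construction: a careful LP duality argument, or equivalently a direct exhibition of a betting strategy that forces a Dutch book whenever one of them fails, is required. A secondary difficulty is handling the boundary of the admissible region, where some $\lambda_{abc}$ may vanish and the sub-system arising from Algorithm \ref{ALG-PREV-INT} must be re-examined; this can be accommodated either by shifting a small amount of mass to constituents with some $\no{H}_i$ true (which had been set to zero), or by a continuity argument using the closure of the coherent region.
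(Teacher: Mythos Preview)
Your sufficiency argument is the paper's argument verbatim: the inclusion--exclusion weights on the eight constituents contained in $H_1H_2H_3$ solve $(\Sigma)$, and since each $C_{abc}\subseteq H_1H_2H_3\subseteq H_i$ the mass on every conditioning event is $1$, whence $I_0=\emptyset$ and Theorem \ref{CNES-PREV-I_0-INT} applies. Your worry about the boundary is misplaced: $I_0$ is defined via the \emph{maximum} over all solutions of $\sum_{h:C_h\subseteq H_i}\lambda_h$, so exhibiting a single solution with full mass on each $H_i$ already forces $I_0=\emptyset$, regardless of which individual $\lambda_{abc}$ vanish. No continuity or mass-shifting is needed.

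Where you diverge from the paper is in necessity, and the paper's route is both simpler and dissolves your ``principal technical obstacle'' entirely. Rather than patching together Fr\'echet-type bounds and an unspecified LP-duality step for the cross inequalities, the paper observes that each of the $26$ value vectors $Q_h$ (for constituents in $H_1\vee H_2\vee H_3$) is a convex combination of the eight vectors $Q_1',\ldots,Q_8'$ associated with the constituents in $H_1H_2H_3$. This is checked by direct computation (e.g.\ $Q_3=x_3Q_1+(1-x_3)Q_2$, and similarly for the others, using the already-established pairwise Fr\'echet bounds to guarantee nonnegativity of the coefficients). Consequently the convex hull of all $26$ points equals the convex hull of the eight, and the necessary condition $\M\in\I$ (which follows from Theorem \ref{SYSTEM-SOLV} applied to the full family) becomes $\M$ lying in this $8$-vertex simplex. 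Since the eight vertices are affinely independent, the barycentric coordinates are unique and coincide with your $\lambda_{abc}$; their nonnegativity is then \emph{equivalent} to the eight inequalities bounding $x_{123}$, giving necessity and sufficiency of \eqref{EQ:SYSTEMPISTATEMENT} in one stroke. Your LP-duality sketch would eventually arrive at the same facets, but the vertex-reduction makes the circularity concern disappear: you never need to produce a separate Dutch-book witness for the cross inequalities.
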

\begin{proof}
	See Appendix \ref{SECT:APPENDIXA}.
\end{proof}	
We observe that, from (\ref{EQ:SYSTEMPISTATEMENT}) it  follows that the  coherence of $(x_1,x_2,x_3,x_{12},x_{13},x_{23})$ amounts to the inequality 
\[
\begin{array}{ll}
\min\{x_{12},x_{13},x_{23},1-x_1-x_2-x_3+x_{12}+x_{13}+x_{23}\}\;\;\leq \\
\leq\;\; \max\{0,x_{12}+x_{13}-x_1,x_{12}+x_{23}-x_2,x_{13}+x_{23}-x_3\}\,.
\end{array}
\]
Then, by   Theorem \ref{THM:PIFOR3} it follows
\begin{corollary}\label{COR:PIFOR3}
For any coherent assessment  $(x_1,x_2,x_3,x_{12},x_{13},x_{23})$ on
\[
\{E_1|H_1,E_2|H_2,E_3|H_3,(E_1|H_1)\wedge (E_2|H_2), (E_1|H_1)\wedge (E_3|H_3),(E_2|H_2)\wedge (E_3|H_3)\}\]
the extension $x_{123}$ on $(E_1|H_1)\wedge (E_2|H_2)\wedge (E_3|H_3)$ is coherent if and only if $x_{123}\in[x_{123}',x_{123}'']$, where
\begin{equation}\label{EQ:INECOR}
\begin{array}{ll}
x_{123}'=\max\{0,x_{12}+x_{13}-x_1,x_{12}+x_{23}-x_2,x_{13}+x_{23}-x_3\},\\
x_{123}''= \min\{x_{12},x_{13},x_{23},1-x_1-x_2-x_3+x_{12}+x_{13}+x_{23}\}.
\end{array}
\end{equation}
\end{corollary}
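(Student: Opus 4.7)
The plan is to derive Corollary \ref{COR:PIFOR3} as a direct consequence of Theorem \ref{THM:PIFOR3}, by separating the seven inequalities in (\ref{EQ:SYSTEMPISTATEMENT}) according to whether or not they involve the unknown extension $x_{123}$. The first five conditions depend only on the sub-assessment $(x_1,x_2,x_3,x_{12},x_{13},x_{23})$, while the last two are precisely the bounds $x_{123}\geq x_{123}'$ and $x_{123}\leq x_{123}''$ from the statement.

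First I would argue that, under the assumption that the 6-tuple $(x_1,x_2,x_3,x_{12},x_{13},x_{23})$ is coherent, the first five conditions of (\ref{EQ:SYSTEMPISTATEMENT}) are automatically satisfied. By the standard coherence extension principle (the fundamental theorem on coherent extensions), any coherent assessment on a subfamily can be coherently extended to a larger family; hence there exists at least one value $\tilde x_{123}$ for which the 7-tuple $(x_1,x_2,x_3,x_{12},x_{13},x_{23},\tilde x_{123})$ is coherent. Applying Theorem \ref{THM:PIFOR3} to this extended assessment yields all seven conditions in (\ref{EQ:SYSTEMPISTATEMENT}), and in particular the first five, which do not depend on $x_{123}$.

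With the first five conditions already in force, Theorem \ref{THM:PIFOR3} reduces the coherence of the 7-tuple to the last two inequalities, that is
\[
\max\{0,x_{12}+x_{13}-x_1,x_{12}+x_{23}-x_2,x_{13}+x_{23}-x_3\}\;\leq\; x_{123}\;\leq\;\min\{x_{12},x_{13},x_{23},1-x_1-x_2-x_3+x_{12}+x_{13}+x_{23}\},
\]
which is exactly $x_{123}\in[x_{123}',x_{123}'']$. Non-emptiness of this interval, i.e.\ $x_{123}'\leq x_{123}''$, is already guaranteed by the existence of at least one coherent extension $\tilde x_{123}$ produced above. I do not anticipate a substantive obstacle: once Theorem \ref{THM:PIFOR3} is in hand, the corollary is a clean repackaging, the only mildly delicate step being the invocation of the extension principle to decouple the five $x_{123}$-free conditions from the two bounds on $x_{123}$; if one preferred a self-contained argument, a short case analysis on the max/min terms in (\ref{EQ:INECOR}) would directly show that the first five conditions imply $x_{123}'\leq x_{123}''$.
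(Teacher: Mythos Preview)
Your proposal is correct and follows essentially the same route as the paper: both derive the corollary directly from Theorem~\ref{THM:PIFOR3} by isolating the two inequalities that involve $x_{123}$. The paper is slightly more terse because it points to the compact form (\ref{EQ:SYSTEMPIcorta}) in the Appendix, which already expresses coherence of the full 7-tuple as exactly $x_{123}'\leq x_{123}\leq x_{123}''$, so it need not separately argue (as you do via the extension principle) that the first five conditions of (\ref{EQ:SYSTEMPISTATEMENT}) hold for any coherent 6-tuple.
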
	
\begin{proof}	
As shown in  (\ref{EQ:SYSTEMPISTATEMENT}), (see also (\ref{EQ:SYSTEMPIcorta}) in the Appendix \ref{SECT:APPENDIXA}), the  coherence of  $(x_1,x_2,x_3,x_{12},x_{13},x_{23},x_{123})$ amounts to  the condition
\[
\begin{array}{ll}
\min\{x_{12},x_{13},x_{23},1-x_1-x_2-x_3+x_{12}+x_{13}+x_{23}\}\;\leq \;x_{123}\;\leq \\
\leq\;\; \max\{0,x_{12}+x_{13}-x_1,x_{12}+x_{23}-x_2,x_{13}+x_{23}-x_3\}\,.
\end{array}
\]
Then, in particular, 
	the extension $x_{123}$ on $(E_1|H_1)\wedge (E_2|H_2)\wedge (E_3|H_3)$ is coherent if and only if $x_{123}\in[x_{123}',x_{123}'']$, where
\[
\begin{array}{ll}
 x_{123}'=\max\{0,x_{12}+x_{13}-x_1,x_{12}+x_{23}-x_2,x_{13}+x_{23}-x_3\},\\
x_{123}''= \min\{x_{12},x_{13},x_{23},1-x_1-x_2-x_3+x_{12}+x_{13}+x_{23}\}.
\end{array}
\]
\qed
\end{proof}

\subsection{The Case $H_1=H_2=H_3$}
We recall that in case of logical dependencies, the set of all coherent assessments may be smaller than that one associated with  the  case of logical independence. 
However, in this section we show that the results of Theorem~\ref{THM:PIFOR3} and Corollary~\ref{COR:PIFOR3} still hold when the conditioning events $H_1,H_2$, and $H_3$ coincide.
\begin{theorem}\label{THM:PIFOR3bis}
Let be given any logically independent events $E_1, E_2, E_3,H$, with $H\neq \emptyset$.
Then, the set $\Pi$ of all coherent assessments  $\mathcal{M}=(x_1,x_2,x_3,x_{12},x_{13},x_{23},x_{123})$ on
$\F=\{E_1|H,E_2|H,E_3|H, (E_1|H)\wedge (E_2|H),$ $(E_1|H)\wedge (E_3|H),
(E_2|H)\wedge (E_3|H)$, ${(E_1|H)\wedge (E_2|H)\wedge (E_3|H)}\}$ is the set of points $(x_1,x_2,x_3,x_{12},x_{13},x_{23},x_{123})$ 
which satisfy the  conditions in formula (\ref{EQ:SYSTEMPISTATEMENT}).
\end{theorem}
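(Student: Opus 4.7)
The plan is to exploit the drastic simplification of the constituent structure when $H_1=H_2=H_3=H$: every conditional random quantity in $\F$ has conditioning event $H$, so $\H_3=H$ and the only constituents contained in $\H_3$ are the eight atoms $C_h=E_1^{\epsilon_1}E_2^{\epsilon_2}E_3^{\epsilon_3}H$, with $(\epsilon_1,\epsilon_2,\epsilon_3)\in\{0,1\}^3$, while $C_0=\no H$. Logical independence of $E_1,E_2,E_3,H$ guarantees each $C_h$ is non-empty. The idea is then to apply the operative characterization of coherence (Theorem~\ref{CNES-PREV-I_0-INT}) directly to the system $(\Sigma)$.

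First I would write $(\Sigma)$ explicitly: the seven prevision equations for $x_1,x_2,x_3,x_{12},x_{13},x_{23},x_{123}$ together with $\sum_{h=1}^8\lambda_h=1$ form a square linear system in $\lambda_1,\ldots,\lambda_8$. Back-substitution starting from $\lambda_1=x_{123}$ yields a unique solution, with $\lambda_2=x_{23}-x_{123}$, $\lambda_3=x_{13}-x_{123}$, $\lambda_4=x_{12}-x_{123}$, and (for instance) $\lambda_5=x_3-x_{13}-x_{23}+x_{123}$, with symmetric expressions for $\lambda_6,\lambda_7$, and
\[
\lambda_8=1-x_1-x_2-x_3+x_{12}+x_{13}+x_{23}-x_{123}.
\]
Imposing $\lambda_h\ge 0$ for $h=1,\ldots,8$ produces exactly the four lower bounds and four upper bounds on $x_{123}$ in conditions (6)--(7) of (\ref{EQ:SYSTEMPISTATEMENT}).

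Next I would verify that conditions (1)--(5) of (\ref{EQ:SYSTEMPISTATEMENT}) are automatic consequences of $\lambda_h\ge 0$ and $\sum\lambda_h=1$: each $x_i$ is a sum of four $\lambda_h$'s, hence in $[0,1]$; the identity $1-x_1-x_2+x_{12}=\lambda_5+\lambda_8$ yields the Fr\'echet lower bound $x_{12}\ge x_1+x_2-1$; the identity $x_{12}-x_{13}-x_{23}+x_3=\lambda_4+\lambda_5$ yields $x_{12}\ge x_{13}+x_{23}-x_3$; and the analogous identities for $x_{13},x_{23}$ together with $1-x_1-x_2-x_3+x_{12}+x_{13}+x_{23}=\lambda_8+x_{123}\ge 0$ cover all the remaining bounds. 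Thus (1)--(5) are redundant once (6)--(7) hold.

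Finally I would verify condition (ii) of Theorem~\ref{CNES-PREV-I_0-INT}: since every element of $\F$ has conditioning event $H$ and every $C_h$ ($h=1,\ldots,8$) is contained in $H$, for any solution of $(\Sigma)$ we have $\sum_{h:C_h\subseteq H}\lambda_h=1\ne 0$, so $I_0=\emptyset$ and coherence reduces to solvability of $(\Sigma)$. Thus $\M$ is coherent if and only if the explicit $\lambda_h$'s above are non-negative, which is equivalent to (\ref{EQ:SYSTEMPISTATEMENT}). The main obstacle is purely bookkeeping: verifying the eight linear identities that translate $\lambda_h\ge 0$ into the bounds in (\ref{EQ:SYSTEMPISTATEMENT}). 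The conceptual simplification relative to Theorem~\ref{THM:PIFOR3} is that collapsing the conditioning events to a single $H$ reduces the coherence problem to the existence of an ordinary joint distribution on the eight atoms of $(E_1,E_2,E_3)$ given $H$, so the same Fr\'echet-Hoeffding-type constraint set is recovered.
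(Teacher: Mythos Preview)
Your proposal is correct and follows essentially the same approach as the paper: identify the eight constituents $C_h\subseteq H$ (plus $C_0=\no H$), note that since all conditioning events equal $H$ coherence reduces to $\M\in\I$ (equivalently $I_0=\emptyset$), and solve the resulting $8\times 8$ linear system to obtain exactly the constraints~(\ref{EQ:SYSTEMPISTATEMENT}). The only cosmetic difference is that the paper shortens step~3 by observing that the eight points $Q_1,\ldots,Q_8$ here coincide with the points $Q_1',\ldots,Q_8'$ already appearing in the proof of Theorem~\ref{THM:PIFOR3}, so the system is literally $(\Sigma')$ from that proof and its solvability conditions have already been worked out, whereas you re-derive the solution and the redundancy of conditions (1)--(5) from scratch.
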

\begin{proof}
	See Appendix \ref{SECT:APPENDIXA}.
\end{proof}	
\begin{corollary}\label{COR:PIFOR3bis}
	For any coherent assessment  $(x_1,x_2,x_3,x_{12},x_{13},x_{23})$ on
	\[
	\{E_1|H,E_2|H,E_3|H, (E_1E_2)|H,(E_1E_3)|H,
	(E_2E_3)|H\}\]
	the extension $x_{123}$ on $(E_1E_2E_3)|H$ is coherent if and only if $x_{123}\in[x_{123}',x_{123}'']$, where
$x_{123}'$ and  $x_{123}''$ are defined in 	(\ref{EQ:INECOR}).
\end{corollary}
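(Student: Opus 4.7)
The corollary is a direct consequence of Theorem~\ref{THM:PIFOR3bis}, obtained in the same way that Corollary~\ref{COR:PIFOR3} was obtained from Theorem~\ref{THM:PIFOR3}. By Theorem~\ref{THM:PIFOR3bis}, a 7-tuple $(x_1,x_2,x_3,x_{12},x_{13},x_{23},x_{123})$ is a coherent assessment on the enlarged family $\{\C_S : \emptyset \neq S \subseteq \{1,2,3\}\}$ (with $H_1=H_2=H_3=H$) if and only if the seven conditions in (\ref{EQ:SYSTEMPISTATEMENT}) hold. The first five conditions involve only the six coordinates $(x_1,x_2,x_3,x_{12},x_{13},x_{23})$, whereas the last two conditions together are exactly $x_{123}'\leq x_{123}\leq x_{123}''$, with $x_{123}'$ and $x_{123}''$ defined in (\ref{EQ:INECOR}).

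Given a coherent assessment $(x_1,x_2,x_3,x_{12},x_{13},x_{23})$ on the six-element subfamily $\{E_1|H,E_2|H,E_3|H,(E_1E_2)|H,(E_1E_3)|H,(E_2E_3)|H\}$, by the fundamental theorem on coherent extensions it admits at least one coherent extension to a value $x_{123}$ of $\prev[(E_1E_2E_3)|H]$. Hence the resulting 7-tuple lies in the set $\Pi$ of Theorem~\ref{THM:PIFOR3bis}, and in particular the first five conditions in (\ref{EQ:SYSTEMPISTATEMENT}) are satisfied by $(x_1,x_2,x_3,x_{12},x_{13},x_{23})$.

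Applying Theorem~\ref{THM:PIFOR3bis} once more, $x_{123}$ is a coherent extension if and only if the full 7-tuple belongs to $\Pi$, which, given that the first five conditions already hold, is equivalent to the last two inequalities in (\ref{EQ:SYSTEMPISTATEMENT}), i.e.\ to $x_{123}\in[x_{123}',x_{123}'']$. This yields the claim. There is no real obstacle here: the entire work has been carried out in proving Theorem~\ref{THM:PIFOR3bis}, and the corollary amounts to nothing more than reading off the bounds on $x_{123}$ from system (\ref{EQ:SYSTEMPISTATEMENT}), in strict parallel with the argument already given for Corollary~\ref{COR:PIFOR3}.
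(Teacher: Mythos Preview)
Your proposal is correct and follows essentially the same approach as the paper, which simply states that the proof is the same as for Corollary~\ref{COR:PIFOR3}: read off the last two inequalities of system (\ref{EQ:SYSTEMPISTATEMENT}) from Theorem~\ref{THM:PIFOR3bis}. Your version is in fact slightly more careful, as you explicitly invoke the existence of at least one coherent extension to justify that the first five conditions of (\ref{EQ:SYSTEMPISTATEMENT}) are automatically satisfied by any coherent assessment on the six-element subfamily.
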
	
\begin{proof}
	The proof is the same as for Corollary \ref{COR:PIFOR3}. 
	\qed
\end{proof}	
Of course, the results of Theorem \ref{THM:PIFOR3bis} and  Corollary \ref{COR:PIFOR3bis} still hold in the unconditional case where ${H=\Omega}$.
\begin{remark}
As shown in this section, a consistent management of conjunctions (and/or disjunctions)  defined on a given family of conditional events $\F$  essentially requires  an (iterative) coherence checking and propagation of probability and prevision assessments on compounded  conditionals,  for each subfamily of $\F$.  Then,  an analysis of complexity in our context would be of the same kind of the   exhaustive complexity analysis  given in \cite{biazzo05} for probabilistic reasoning under coherence.
\end{remark}
\section{Characterization of p-consistency and p-entailment with applications to nonmonotonic reasoning}
\label{SECT:APPL}
In this section we apply our notion of conjunction to characterize the notions of p-consistency and p-entailment. Then, we examine some inference rules  related with probabilistic nonmonotonic reasoning.
We also briefly describe a  characterization of p-entailment   by a notion  of iterated conditioning,
in the case of two premises.  
We recall below the notions of p-consistency and p-entailment of Adams
(\cite{adams75}) as formulated for conditional events in
the setting of coherence  (see, e.g., \cite{biazzo05,gilio10,GiSa13IJAR}).  
\begin{definition}
	\label{PC}
	Let $\mathcal{F} = \{E_{i}|H_{i} \, , \; i=1,\ldots ,n\}$ be a
	family of $n$ conditional events. Then, $\mathcal{F}$ is \emph{p-consistent}
	if and only if the probability assessment $(p_{1},p_{2},\ldots ,
	p_{n})=(1,1,\ldots ,1)$ on $\mathcal{F}$ is coherent.
\end{definition}
%d3 #&#
\begin{definition}
	\label{PE}
	A p-consistent family $\mathcal{F} = \{E_{i}|H_{i} \, , \; i=1,\ldots ,n\}$
	\emph{p-entails} a conditional event $E_{n+1}|H_{n+1}$  if and only if for any coherent probability
	assessment $(p_{1},\ldots ,
	p_{n},p_{n+1})$ on $\mathcal{F} \cup \{E_{n+1}|H_{n+1}
	\}$ it holds that: if $p_{1}=\cdots =p_{n}=1$, then $p_{n+1}=1$.
\end{definition}
We  recall below  the  notion of logical implication (\cite{GoNg88}) between two conditional events. 
\begin{definition}\label{DEF:GN}
Given two conditional events $A|H$ and $B|K$ we say that \emph{$A|H$ logically implies $B|K$}, which we denote by $A|H \subseteq B|K$, if and only if
 		$AH$  \emph{true} implies $BK$  \emph{true} and
 		$\no{B}K$  \emph{true} implies $\no{A}H$  
 		\emph{true}; that is: $AH\subseteq BK$ and $\no{B}K\subseteq \no{A}H$.
\end{definition}
We observe that, by coherence, it holds that (see, e.g., \cite[Theorem 7]{gilio13}).
\begin{equation}\label{EQ:NGPROB} 
A|H\subseteq B|K\;\; \Longrightarrow \;\; P(A|H)\leq P(B|K).   
\end{equation}
We also recall the notion  of quasi conjunction for a general family of $n$ conditional events.
\begin{definition}\label{DEF:QC}
Given a family  $\mathcal{F} = \{E_{i}|H_{i} \, , \; i=1,\ldots ,n\}$ of $n$ conditional events, 
the \emph{quasi conjunction} $QC(\mathcal{F})$ of the conditional events in $\mathcal{F}$ is defined as the following conditional event
\[
QC(\mathcal{F})= \bigwedge_{i=1}^n(\no{H}_i\vee E_iH_i)| (\bigvee_{i=1}^n H_i).
\]	
\end{definition}
\begin{remark}
We observe that, by Definition \ref{DEF:QC},  based on (\ref{EQ:ESSE}) the quasi conjunction can be represented as 
 		\begin{equation}\label{EQ:QC}
 		\begin{array}{lll}
 		QC(\mathcal{F})=\sum_{h=0}^{m} \nu_{h}C_h, & \mbox{where} &
 		\nu_{h}=
 		\left\{
 		\begin{array}{llll}
 		1, &\mbox{ if } S_h'\neq \emptyset \mbox{ and }  S_h''=\emptyset,\\
 		0, &\mbox{ if } S_h''\neq \emptyset \\
 		\nu,   &\mbox{ if }  S_h''' =\{1,\ldots,n\}\,, 
 		\end{array}
 		\right.
 		\end{array}
 		\end{equation}
 where $\nu=P(QC(\F))$.
Therefore, 
 by (\ref{EQ:CF}), (\ref{EQ:QC}), and by also recalling Theorem \ref{THM:INEQ-CRQ}, 
  it holds that $z_h\leq \nu_h$, $h=0,1,\ldots,m$; thus
\begin{equation}\label{EQ:CFlessQC}
\C(\F)\leq QC(\F).
\end{equation}
In particular, if $\F$ is p-consistent and $P(E_i|H_i)=1$, $i=1,\ldots,n$, then from (\ref{EQ:LUKMIN}) it holds that $x_{S}=\prev(\C(\F_S))=1$ for every $S\subseteq\{1,2,\ldots,n\}$, where $\F_S=\{E_i|H_i\in \F: i \in S\}$;
then  $z_h=\nu_h$, $h=0,1,\ldots,m$, and  $\C(\F)=QC(\F)$. 
\end{remark}
%\begin{remark}
%	Given three conditional events,  for every non empty subset $S \subseteq \{1,2,3\}$, we denote by $QC_S$  the quasi conjunction $QC(\{E_i|H_i,i\in S\})=(\bigwedge_{i\in S}(\no{H}_i\vee E_i))\Big |(\bigvee_{i\in S}H_i)$. 
%	Moreover, we recall that $\C_{S}=\bigwedge_{i\in S} (E_i|H_i)$. Then, it holds that
%	\begin{equation}
%	\label{EQ:C<=QC}
%	\C_S\leq QC_S,\;\;\; \forall  S \subseteq \{1,2,3\}, \; S\neq \emptyset.
%	\end{equation}
%	Assuming  $x_i=1$, $i=1,2,3$, from Theorem \ref{THM:FRECHET} it follows that  $x_{ij}=1$ for each $i\neq j$ and by  (\ref{EQ:INECOR}) it follows that $x_{123}=1$, that is 
%	\begin{equation}
%	\label{EQ:ANDRULE3}
%	P(E_i|H_i)=1, \; i=1,2,3 \Longrightarrow \prev(\C_{S})=1, \; \forall  S \subseteq \{1,2,3\}, \; S\neq \emptyset.
%	\end{equation}
%	We also recall that  (\cite[Theorem 8]{gilio13})
%	\[
%	P(E_i|H_i)=1, \; i=1,2,3 \Longrightarrow\; P(QC_{S})=1, \; \;\; \forall  S \subseteq \{1,2,3\}, \; S\neq \emptyset.
%	\]	
%	Then, 
%	\begin{equation}
%	\label{EQ:C=QC}
%	x_i=1,\;  i=1,2,3\; \Longrightarrow \;\C_S=QC_S,\;\;\; \forall  S \subseteq \{1,2,3\}, \; S\neq \emptyset.
%	\end{equation}
%	By also iteratively exploiting  Theorem~\ref{THM:FRECHETCn},  formulas (\ref{EQ:C<=QC}), (\ref{EQ:ANDRULE3}), and 
%	(\ref{EQ:C=QC}) can be generalized to the case of $n$ conditional events $E_1|H_1,\ldots, E_n|H_n$.
%\end{remark}
\subsection{Characterization of p-consistency and p-entailment}
We illustrate below a characterization of p-consistency of a family $\F$ in terms of the coherence of the prevision assessment  $\prev[\C(\F)]=1$.
\begin{theorem}\label{THM:PCC}
A  family of $n$ conditional events $\F=\{E_1|H_1,\ldots, E_n|H_n\}$ is p-consistent if and only if the prevision assessment  $\prev[\C(\F)]=1$ is coherent.
\end{theorem}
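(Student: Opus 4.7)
The plan is to reduce both directions of the equivalence to the Fr\'echet--Hoeffding bounds established in Theorem \ref{THM:LUKMIN}, combined with the fundamental theorem of coherence (which guarantees that any coherent assessment admits a coherent extension to a larger family, and conversely that coherence is preserved under restriction to subfamilies).

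For the \emph{only if} direction, suppose $\F$ is p-consistent, so the probability assessment $(x_1,\ldots,x_n)=(1,\ldots,1)$ on $\F$ is coherent. By the fundamental theorem, this assessment admits a coherent extension $\mu_n=\prev[\C(\F)]$ to the enlarged family $\F\cup\{\C(\F)\}$. Theorem \ref{THM:LUKMIN} then forces
\[
1 \;=\; \max\{\,1+\cdots+1-(n-1),\,0\,\}\;\leq\;\mu_n\;\leq\;\min\{1,\ldots,1\}\;=\;1,
\]
so necessarily $\mu_n=1$. Restricting this coherent assessment to the singleton $\{\C(\F)\}$, we conclude that $\prev[\C(\F)]=1$ is coherent.

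For the \emph{if} direction, suppose $\prev[\C(\F)]=1$ is coherent. Applying the fundamental theorem iteratively, one coordinate at a time, we extend to a coherent assessment $(x_1,\ldots,x_n,1)$ on $\F\cup\{\C(\F)\}$. Since each $x_i$ is a coherent probability it lies in $[0,1]$; by Theorem \ref{THM:LUKMIN} applied to this extended coherent assessment we have $1=\mu_n\leq\min\{x_1,\ldots,x_n\}$, which forces $x_i=1$ for every $i\in\{1,\ldots,n\}$. Hence $(1,\ldots,1,1)$ is coherent on $\F\cup\{\C(\F)\}$, and restricting to $\F$ shows that $(1,\ldots,1)$ is coherent on $\F$, i.e., $\F$ is p-consistent.

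There is no serious obstacle here: the content of the theorem is entirely captured by the tightness and the one-sided use of the Fr\'echet--Hoeffding bounds at the extreme value $1$. The only point that requires a minimum of care is the legitimate invocation of the extension theorem in each direction, so that we may compare $\mu_n$ and the $x_i$'s within a \emph{single} coherent assessment before appealing to Theorem \ref{THM:LUKMIN}.
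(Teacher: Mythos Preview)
Your proof is correct and follows essentially the same approach as the paper: both directions are obtained from the Fr\'echet--Hoeffding bounds of Theorem~\ref{THM:LUKMIN}, using that the extension is forced to be $1$ in one direction and that the upper bound $\mu_n\leq\min_i x_i$ forces all $x_i=1$ in the other. You are simply a bit more explicit than the paper about invoking the fundamental extension theorem to place $\mu_n$ and the $x_i$'s in a common coherent assessment before applying the bounds.
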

\begin{proof}
$(\Rightarrow)$
 By Definition \ref{PC}, as $\F$ is p-consistent, the 
 probability assessment $(x_{1},x_{2},\ldots ,
	x_{n})=(1,1,\ldots ,1)$ on $\mathcal{F}$ is coherent. Then, by (\ref{EQ:LUKMIN}) the  extension $\prev[\C(\F)]=1$ is unique and of course  coherent.\\
$(\Leftarrow)$ By (\ref{EQ:LUKMIN}) it holds that $\prev[\C(\F)]\leq \min\{x_1,\ldots,x_n\}$
and hence ${\prev[\C(\F)]=1}$ implies  $(x_{1},x_{2},\ldots,x_{n})=(1,1,\ldots ,1)$ on $\mathcal{F}$. Moreover, the coherence of  $\prev[\C(\F)]=1$ requires that the  (unique) extension $(1,1,\ldots ,1)$ on $\F$ be coherent. Thus,  $\F$ is p-consistent.
\end{proof}
We observe that, in the case where $H_1=\ldots=H_n=H$, the assessment $P(E_1|H)=\ldots P(E_n|H)=1$ is coherent (that is, $\F$ is p-consistent) if and only if  $P[(E_1\cdots E_n)|H]=1$ is coherent.\\
The next theorem gives a characterization of p-entailment in terms of a result which involves suitable  conjunctions associated with the premise set and the conclusion of the given inference rule.
%given in \cite[Section 5]{GiPS18wp}, 
\begin{theorem}\label{THM:PENT}
Let be given a p-consistent family of $n$ conditional events $\F=\{E_1|H_1,\ldots, E_n|H_n\}$ and a further conditional event $E_{n+1}|H_{n+1}$. Then, the following assertions are equivalent:\\
(i) $\F$ p-entails $E_{n+1}|H_{n+1}$;\\
(ii) the conjunction $\C_{n+1}=(E_1|H_1)\wedge\cdots \wedge(E_n|H_n)\wedge (E_{n+1}|H_{n+1})$ coincides with
the  conjunction $\C_{n}=(E_1|H_1)\wedge\cdots \wedge(E_n|H_n)$;\\
(iii) the inequality  $\C_{n}\leq (E_{n+1}|H_{n+1})$ is satisfied.
\end{theorem}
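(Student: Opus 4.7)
The plan is to prove the cyclic chain $(i)\Rightarrow(ii)\Rightarrow(iii)\Rightarrow(i)$.

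For $(ii)\Rightarrow(iii)$: applying Theorem~\ref{THM:MONOTONY} to $\F_{n+1}=\F\cup\{E_{n+1}|H_{n+1}\}$ with the singleton subfamily $\{E_{n+1}|H_{n+1}\}$ gives $\C_{n+1}\leq E_{n+1}|H_{n+1}$; combining with $\C_{n+1}=\C_n$ yields $\C_n\leq E_{n+1}|H_{n+1}$. For $(iii)\Rightarrow(i)$: Theorem~\ref{THM:INEQ-CRQ}(i) applied to $\C_n\leq E_{n+1}|H_{n+1}$ gives $\prev(\C_n)\leq P(E_{n+1}|H_{n+1})$ for every coherent assessment; whenever $P(E_i|H_i)=1$ for all $i\leq n$, Theorems~\ref{THM:LUKMIN} and~\ref{THM:INTCn} force $\prev(\C_n)=1$, hence $P(E_{n+1}|H_{n+1})=1$, which is exactly p-entailment (Definition~\ref{PE}).

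The substantive step is $(i)\Rightarrow(ii)$. The plan is to show that $\C_n$ and $\C_{n+1}$ take the same value on every non-empty constituent of $\F_{n+1}$ contained in $\mathcal{H}_{n+1}=H_1\vee\cdots\vee H_{n+1}$, and then apply Theorem~\ref{THM:EQ-CRQ}(ii) to promote this pointwise agreement on $\mathcal{H}_{n+1}$ to equality of $\C_n$ and $\C_{n+1}$ as conditional random quantities (which simultaneously equates their prevision values on the residual constituent $C_0=\no{H}_1\cdots\no{H}_{n+1}$). Reading the values off Definition~\ref{DEF:CONJUNCTIONn} for a constituent $C_h$ with tripartition $(S_h',S_h'',S_h''')$, one verifies that the two conjunctions agree automatically whenever $S_h''\cap\{1,\ldots,n\}\neq\emptyset$ (both vanish) or $n+1\in S_h'$ (both equal $1$, both equal $0$, or both equal the common value $x_{S_h'''}$). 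The only potentially mismatching cases therefore have $S_h''\cap\{1,\ldots,n\}=\emptyset$ and $n+1\in S_h''\cup S_h'''$.

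In each such mismatching case the plan is to show $C_h=\emptyset$ by a standard atomic coherence argument. The probability concentrated on $C_h$ yields $P(E_i|H_i)=1$ for every $i\in S_h'\cap\{1,\ldots,n\}$, while for $i\in T':=S_h'''\cap\{1,\ldots,n\}$ one has $P(H_i)=0$, so $P(E_i|H_i)$ is a free coherent extension in $[0,1]$ that we take to be $1$; the resulting coherent assessment satisfies $p_1=\cdots=p_n=1$, while $p_{n+1}$ can coherently be chosen different from $1$ (it is forced to $0$ when $n+1\in S_h''$, and it is free in $[0,1]$ when $n+1\in S_h'''$, since then $P(H_{n+1})=0$). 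This contradicts p-entailment, hence $C_h=\emptyset$.

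The main obstacle is the subcase $n+1\in S_h'''$ together with $T'\neq\emptyset$: here $\C_n(C_h)=x_{T'}$ while $\C_{n+1}(C_h)=x_{T'\cup\{n+1\}}$, two prevision values which the Fr\'echet bounds of Theorem~\ref{THM:FRECHETCn} do not force to coincide (they give only $x_{T'\cup\{n+1\}}\leq x_{T'}$), so the weaker hypothesis $(iii)$ by itself does not suffice to match $\C_n$ and $\C_{n+1}$ on $C_h$; it is precisely p-entailment, via the free-extension trick above, that eliminates this constituent. Once every mismatching constituent has been shown empty, $\C_n$ and $\C_{n+1}$ agree pointwise on $\mathcal{H}_{n+1}$, and Theorem~\ref{THM:EQ-CRQ}(ii) completes the argument.
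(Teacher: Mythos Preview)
Your implications $(ii)\Rightarrow(iii)$ and $(iii)\Rightarrow(i)$ match the paper and are fine. The gap is in $(i)\Rightarrow(ii)$, and it is a genuine one.

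Your plan is to show that every ``mismatching'' constituent $C_h$ is logically empty. This is false. Take $n=2$, $H_2=H_3=H$, $E_2\subseteq E_3$, and let $E_1,H_1,H$ be logically independent of one another. Then $E_2|H\subseteq E_3|H$, so $\{E_1|H_1,E_2|H\}$ p-entails $E_3|H$. The constituent $C_h=E_1H_1\no{H}$ has $S_h'=\{1\}$, $S_h''=\emptyset$, $S_h'''=\{2,3\}$, hence $T'=\{2\}$ and $n+1=3\in S_h'''$; it is one of your mismatching cases, yet $C_h\neq\emptyset$. On this constituent $\C_2$ takes the value $x_2$ and $\C_3$ takes the value $x_{23}$; they do coincide (coherence forces $x_{23}=x_2$ here), but this is a prevision identity, not a consequence of $C_h=\emptyset$.

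The error in your argument is the claim that, after concentrating mass on $C_h$, the conditional probabilities $P(E_i|H_i)$ with $i\in S_h'''$ are ``free in $[0,1]$''. They are not jointly free: at the second layer of Algorithm~\ref{ALG-PREV-INT} the sub-assessment on $\{E_i|H_i:i\in S_h'''\}$ must itself be coherent, and constraints among those conditionals can force $p_{n+1}=1$ whenever $p_i=1$ for $i\in T'$. In the example above, $P(E_2|H)\leq P(E_3|H)$ for every coherent assessment, so $(p_2,p_3)=(1,0)$ is incoherent and your contradiction never materializes. The same obstruction arises whenever p-entailment is witnessed by a subfamily $\F_\Gamma$ with $\Gamma\subseteq T'$, and this is exactly the situation your case analysis cannot rule out. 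The paper circumvents this by invoking the quasi-conjunction characterization of p-entailment (either $H_{n+1}\subseteq E_{n+1}$, or $QC(\F_\Gamma)\subseteq E_{n+1}|H_{n+1}$ for some $\Gamma$): it first proves $\C_{\Gamma\cup\{n+1\}}=\C_\Gamma$ and then uses associativity to get $\C_{n+1}=\C_n$. Your constituent-by-constituent approach could perhaps be repaired by an induction that, when $C_h\neq\emptyset$, appeals to $\C_{T'\cup\{n+1\}}=\C_{T'}$ for the strictly smaller family $\F_{T'}$; but that step is missing, and the atomic-freedom claim as stated is incorrect.
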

\begin{proof}
	See Appendix \ref{SECT:APPENDIXA}.
\end{proof}	
As a first simple application of Theorem \ref{THM:PENT} we observe that, given two conditional events $A|H$, with $AH\neq \emptyset$, and $B|K$, the p-entailment of $B|K$  from $A|H$ amounts to the condition $(ii)$, i.e., $A|H\wedge B|K=A|H$, or equivalently condition $(iii)$, i.e.,  $A|H\leq B|K$. In particular, $(ii)$ and $(iii)$ are both satisfied  when $A|H\subseteq B|K$.
\subsection{Applications to some p-valid inference rules} 
We recall that  an inference from a p-consistent family $\mathcal{F}$ to $E|H$ is \emph{p-valid} if and only	if 	$\mathcal{F}$ p-entails $E|H$.
We will examine some p-valid inference rules by verifying that conditions $(ii)$ and $(iii)$ in Theorem~\ref{THM:PENT} are satisfied. In particular we consider the following inference rules of System P: \emph{And}, \emph{Cut}, \emph{CM}, and \emph{Or}. In what follows, if not specified otherwise, the basic events are assumed to be logically independent.
\paragraph{And rule:}  The family  $\{B|A, C|A\}$ p-entails  $BC|A$.
It holds that 
 $(B|A)\wedge (C|A)=BC|A=(B|A)\wedge (C|A) \wedge (BC|A)$ and  $(B|A)\wedge (C|A)=BC|A\leq BC|A$; that is,  conditions $(ii)$ and $(iii)$  are satisfied.
\paragraph{Cut rule:}
The family  $\{C|AB, B|A\}$ p-entails  $C|A$.
By  (\ref{EQ:REPRES}), as $\no{AB}AB=\no{A}ABC=\emptyset$, it holds that
\[
(C|AB)\wedge (B|A)=ABC+z\no{A},
\]
 where $z=\prev[(C|AB)\wedge (B|A)]$. 
Moreover, $BC|A=ABC+x\no{A}$, where $x=P(BC|A)$.
As $(C|AB)\wedge (B|A)$ and $BC|A$ coincide conditionally on $A$ being true, 
by Theorem \ref{THM:EQ-CRQ}, it follows that  $(C|AB)\wedge (B|A)=BC|A$.
Then, condition $(ii)$ is satisfied, that is $(C|AB)\wedge  (B|A) \wedge (C|A)=(BC|A) \wedge  (C|A)=BC|A=(C|AB)\wedge  (B|A)$. Moreover,  $C|AB\wedge  B|A=BC|A\leq C|A$, that is condition $(iii)$ is satisfied too.
\begin{remark}
As shown in the analysis of Cut rule, it holds that $C|AB\wedge  B|A=BC|A$. Then,  the family $\{C|AB,  B|A\}$ p-entails $BC|A$. This p-valid rule is
called CCT (Conjunctive Cumulative Transitivity); see, e.g.,  \cite{Verheij2017}.
\end{remark}	
\paragraph{CM rule:}
The family  $\{C|A, B|A\}$ p-entails  $C|AB$.
 It holds that $(C|A)\wedge (B|A)=BC|A$.
 Moreover, $(C|A)\wedge (B|A) \wedge (C|AB)=(BC|A) \wedge (C|AB)$. 
 By  (\ref{EQ:REPRES}),  it holds that
 \[
(BC|A)\wedge (C|AB)=ABC+z\no{A},
 \]
 where $z=\prev[(BC|A)\wedge (C|AB)]$. 
Moreover, $BC|A=ABC+x\no{A}$, where $x=P(BC|A)$.
 As $(BC|A)\wedge (C|AB)$ and $BC|A$ coincide conditionally on $A$ being true, 
 by Theorem \ref{THM:EQ-CRQ} it follows that  $(BC|A)\wedge (C|AB)=BC|A$; so that
 $(C|A)\wedge (B|A) \wedge (C|AB)=BC|A=(C|A)\wedge (B|A)$, so that condition $(ii)$ is satisfied.
Moreover,  based on Definition \ref{DEF:GN}, it holds that
 $(C|A)\wedge (B|A)=BC|A\subseteq C|AB$, then $(C|A)\wedge (B|A)\leq C|AB$, so that condition $(iii)$ is satisfied too.
\paragraph{Or rule:} The family  $\{C|A, C|B\}$ p-entails $C|(A \vee B)$. 
 We set $P(C|A)=x$, $P(C|B)=y$, and $\prev((C|A) \wedge (C|B))=z$; then,  by observing that the family $\{ABC,\no{A}BC,A\no{B}C,(A\vee B)\no{C},\no{A}\no{B}\}$ is a partition of the sure event, we obtain 
 \begin{equation}
 \label{EQ:CAandCB}
 (C|A) \wedge (C|B) = \left\{\begin{array}{ll}
 1, &\mbox{if $ABC$ is true,}\\
 0, &\mbox{if $(A\vee B)\no{C}$ is true,}\\
 x, &\mbox{if $\no{A}BC$ is true,}\\
 y, &\mbox{if $A\no{B}C$ is true,}\\
 z, &\mbox{if $\no{A}\no{B}$ is true.}
 \end{array}
 \right.
 \end{equation}
 Moreover, by defining $\prev[  (C|A) \wedge (C|B)\wedge (C|(A \vee B))]=t$, we obtain
 \[
  (C|A) \wedge (C|B)\wedge (C|(A \vee B))= 
 \left\{\begin{array}{ll}
 1, &\mbox{if $ABC$ is true,}\\
 0, &\mbox{if $(A\vee B)\no{C}$ is true,}\\
 x, &\mbox{if $\no{A}BC$ is true,}\\
 y, &\mbox{if $A\no{B}C$ is true,}\\
 t, &\mbox{if $\no{A}\no{B}$ is true.}
 \end{array}
 \right.
 \]
 As we can see, $  (C|A) \wedge (C|B)\wedge (C|(A \vee B))$ and $(C|A) \wedge (C|B)$ coincide when $A \vee B$ is true; then, by Theorem \ref{THM:EQ-CRQ}  it holds that $t = z$, so that 
 \[
  (C|A) \wedge (C|B)\wedge (C|(A \vee B)) = (C|A) \wedge (C|B),
 \]
that is condition $(ii)$ is satisfied.
Moreover, defining $P(C|(A \vee B))=w$, we have 
 \begin{equation}\label{EQ:CgAvB}
C|(A\vee B)=
\left\{\begin{array}{ll}
 1, &\mbox{if $ABC$ is true,}\\
 0, &\mbox{if $(A\vee B)\no{C}$ is true,}\\
 1, &\mbox{if $\no{A}BC$ is true,}\\
 1, &\mbox{if $A\no{B}C$ is true,}\\
w , &\mbox{if $\no{A}\no{B}$ is true.}
 \end{array}
 \right.
 \end{equation}
Based on  (\ref{EQ:CAandCB}) and (\ref{EQ:CgAvB}),  it holds that $ (C|A) \wedge (C|B) \leq C|(A\vee B)$ conditionally on $A \vee B$ being true. Then, from Theorem \ref{THM:INEQ-CRQ} it holds that $\prev((C|A) \wedge (C|B))=t\leq w=P(C|(A\vee B))$; thus 
 $ (C|A) \wedge (C|B) \leq C|(A\vee B)$, that is condition $(iii)$ is satisfied.
 \paragraph{An inference rule related to Or rule \cite[Rule 5, p. 189]{adams65}.} In this inference rule  the premise set is  $\{C|(A\vee B), \no{C}|A\}$ and the conclusion is $C|B$. 
We first observe that the premise set $\F=\{C|(A\vee B), \no{C}|A\}$ is p-consistent because the assessment $P(C|(A\vee B))=P(\no{C}|A)=1$ is coherent.  Indeed, by applying Algorithm \ref{ALG-PREV-INT} to the pair $(\F,\M)=(\{C|(A\vee B), \no{C}|A\},(1,1))$, it holds that the starting system $(\Sigma)$ is solvable, with $\F_0=\{\no{C}|A\}$. Then, by repeating the steps of the algorithm,  the assessment $P(\no{C}|A)=1$ is coherent. Thus, the assessment $(1,1)$ on $\F$ is coherent and hence  $\F$ is p-consistent.
We also note that, defining
 $P(C|(A\vee B))=x$, $P(\no{C}|A)=y$, and $\prev((C|(A\vee B)) \wedge (\no{C}|A))=z$, 
 the coherence of $(x,y)=(1,1)$ from (\ref{EQ:LUKMIN}) amounts to coherence of $z=1$, which  by Theorem \ref{THM:PCC} is another characterization for the  p-consistency of $\F$. Concerning p-entailment, we observe that 
 \begin{equation}
 \label{EQ:CAvBandnCA}
(C|A\vee B) \wedge (\no{C}|A) =
\left\{\begin{array}{ll}
0, &\mbox{if $ABC$ is true,}\\	
0, &\mbox{if $AB\no{C}$ is true,}\\
0, &\mbox{if $A\no{B}C$ is true,}\\
0, &\mbox{if $A\no{B}\no{C}$ is true,}\\
y, &\mbox{if $\no{A}BC$ is true,}\\	
0, &\mbox{if $\no{A}B\no{C}$ is true,}\\
z, &\mbox{if $\no{A}\no{B}$ is true,}\\	
\end{array}
\right.
=
 \left\{\begin{array}{ll}
 0, &\mbox{if $A\vee \no{A}B\no{C}$ is true,}\\
 y, &\mbox{if $\no{A}BC$ is true,}\\
 z, &\mbox{if $\no{A}\no{B}$ is true.}
 \end{array}
 \right. 
 \end{equation}
 Moreover, by defining $\prev[  (C|(A\vee B)) \wedge (\no{C}|A)  \wedge (C|B)]=t$, we obtain
  \begin{equation}
 \label{EQ:CAvBandnCAandCB}
 (C|(A\vee B)) \wedge (\no{C}|A)  \wedge (C|B) = \left\{\begin{array}{ll}
 0, &\mbox{if $A\vee \no{A}B\no{C}$ is true,}\\
 y, &\mbox{if $\no{A}BC$ is true,}\\
 t, &\mbox{if $\no{A}\no{B}$ is true.}
 \end{array}
 \right.
 \end{equation}
  As we can see from (\ref{EQ:CAvBandnCA}) and  (\ref{EQ:CAvBandnCAandCB}), the two quantities $(C|(A\vee B)) \wedge (\no{C}|A)  \wedge (C|B)$ and $(C|(A\vee B)) \wedge (\no{C}|A)$ coincide when $A \vee B$ is true; then, by Theorem \ref{THM:EQ-CRQ}  it holds that $t = z$, so that 
 \[
  (C|(A\vee B)) \wedge (\no{C}|A)  \wedge (C|B) = (C|(A\vee B)) \wedge (\no{C}|A),
 \]
 that is condition $(ii)$ is satisfied.
 Moreover, defining $P(C|B)=w$, we have 
  \begin{equation}
\label{EQ:CB}
C|B = 
\left\{\begin{array}{ll}
1, &\mbox{if $BC$ is true,}\\
0, &\mbox{if $B\no{C}$ is true,}\\
w, &\mbox{if $\no{B}$ is true.}
\end{array}
\right.=\left\{\begin{array}{ll}
1, &\mbox{if $ABC$ is true,}\\	
0, &\mbox{if $AB\no{C}$ is true,}\\
w, &\mbox{if $A\no{B}C$ is true,}\\
w, &\mbox{if $A\no{B}\no{C}$ is true,}\\
1, &\mbox{if $\no{A}BC$ is true,}\\	
0, &\mbox{if $\no{A}B\no{C}$ is true,}\\
w, &\mbox{if $\no{A}\no{B}$ is true,}\\	
\end{array}
\right.
  \end{equation}
Based on  (\ref{EQ:CAvBandnCA}) and (\ref{EQ:CB}),  it holds that $(C|(A\vee B)) \wedge (\no{C}|A) \leq C|B$ conditionally on $A \vee B$ being true. Then, from Theorem \ref{THM:INEQ-CRQ} it holds that $\prev((C|(A\vee B)) \wedge (\no{C}|A))=t\leq w=P(C|B)$; thus 
 $ (C|(A\vee B)) \wedge (\no{C}|A) \leq C|B$, that is condition $(iii)$ is satisfied.
Thus, this inference rule is p-valid. Notice that the p-validity of the rule could be also derived by using the lower and upper bounds given for Or rule in \cite{gilio02}. Indeed, using Or rule, when $P(C|A)=0$ and $P(C|B)=y$ it holds that  $z=P(C|A\vee B)\in[0, y]$, so that $P(C|(A\vee B))\leq P(C|B)$.  Then, $P(C|(A\vee B))=1$ and $P(\no{C}|A)=1$ implies $P(C|B)=1$, that is $\{C|(A\vee B), \no{C}|A\}$ p-entails $C|B$.
\paragraph{Generalized Or rule:} 
In this p-valid rule, studied in \cite{gilio12ijar} (see also \cite{gilio13}), the p-consistent premise set  is $\{C|A_1, C|A_2,\ldots, C|A_n\}$ and the conclusion is  $C|(A_1 \vee A_2\vee \cdots \vee A_n)$. 
For each nonempty subset $S\subset \{1,2,\ldots,n\}$, we define   
$\prev[\bigwedge_{i\in S}(C|A_i)]=x_S$; moreover, we set $\prev[\bigwedge_{i=1}^n(C|A_i)]=z$.  Then,
\begin{equation}
\label{EQ:CAandCBGEN}
(C|A_1) \wedge\cdots \wedge (C|A_n)= \left\{\begin{array}{ll}
1, &\mbox{if $A_1A_2\cdots A_nC$ is true,}\\
0, &\mbox{if $(A_1\vee A_2\vee \cdots \vee A_n)\no{C}$ is true,}\\
x_S, &\mbox{if $ \bigwedge_{i\in S}\no{A}_i  \bigwedge_{j\notin S}{A_j}C$ is true,}\\
z, &\mbox{if $\no{A}_1\no{A}_2\cdots \no{A}_n$ is true}.
\end{array}
\right.
\end{equation}
Moreover, by defining $\prev[  (C|A_1) \wedge\cdots \wedge (C|A_n)\wedge (C|(A_1 \vee A_2\vee \cdots \vee A_n))]=t$, we obtain
\begin{equation}\label{EQ:29}
(C|A_1) \wedge\cdots \wedge (C|A_n)\wedge (C|(A_1 \vee A_2\vee \cdots \vee A_n))= 
\left\{\begin{array}{ll}
1, &\mbox{if $A_1A_2\cdots A_nC$ is true,}\\
0, &\mbox{if $(A_1\vee A_2\vee \cdots \vee A_n)\no{C}$ is true,}\\
x_S, &\mbox{if $ \bigwedge_{i\in S}\no{A}_i  \bigwedge_{j\notin S}{A_j}C$ is true,}\\
t, &\mbox{if $\no{A}_1\no{A}_2\cdots \no{A}_n$ is true}.
\end{array}
\right.
\end{equation}
As we can see from (\ref{EQ:CAandCBGEN}) and  (\ref{EQ:29}), $ (C|A_1) \wedge\cdots \wedge (C|A_n)\wedge (C|(A_1 \vee A_2\vee \cdots \vee A_n))$ and $(C|A_1) \wedge\cdots \wedge (C|A_n)$ coincide when $A_1 \vee \cdots \vee A_n$ is true; then, by Theorem \ref{THM:EQ-CRQ}  it holds that $t = z$, so that 
\[
(C|A_1) \wedge\cdots \wedge (C|A_n)\wedge (C|(A_1 \vee A_2\vee \cdots \vee A_n)) = (C|A_1) \wedge\cdots \wedge (C|A_n),
\]
that is condition $(ii)$ is satisfied.
Moreover, 
\begin{equation}
\label{EQ:CgAvBGEN}
C|(A_1 \vee A_2\vee \cdots \vee A_n)=
\left\{\begin{array}{ll}
1, &\mbox{if $A_1A_2\cdots A_nC$ is true,}\\
0, &\mbox{if $(A_1\vee A_2\vee \cdots \vee A_n)\no{C}$ is true,}\\
1, &\mbox{if $ \bigwedge_{i\in S}\no{A}_i  \bigwedge_{j\notin S}{A_j}C$ is true,}\\
w, &\mbox{if $\no{A}_1\no{A}_2\cdots \no{A}_n$ is true},
\end{array}
\right.
\end{equation}
where $w=P(C|(A_1 \vee A_2\vee \cdots \vee A_n))$. 
Based on  (\ref{EQ:CAandCBGEN}) and (\ref{EQ:CgAvBGEN}),  it holds that 
$ (C|A_1) \wedge\cdots \wedge (C|A_n)\leq C|(A_1 \vee A_2\vee \cdots \vee A_n)$ 
conditionally on $A_1 \vee \cdots \vee A_n$ being true. Then, from Theorem \ref{THM:INEQ-CRQ} it holds that $t\leq w$; thus 
$(C|A_1) \wedge\cdots \wedge (C|A_n)\leq C|(A_1 \vee A_2\vee \cdots \vee A_n)$, that is condition $(iii)$ is satisfied.\\
\subsection{Iterated conditioning and  p-entailment}
We now briefly describe a  characterization of p-entailment  of a conditional event $E_3|H_3$ from a p-consistent family $\{E_1|H_1, E_2|H_2\}$,  which exploits  a suitable notion of iterated conditioning. 
%We recall the following notion of it
%% (see, e.g., \cite{GiSa13c,GiSa13a,GiSa14})
% which allows
%%d6 #&#
%\begin{definition}[Iterated conditioning]
%	\label{DEF:ITER-COND} Given any pair of conditional events $E_1|H_1$ and $E_2|H_2$, with $E_1H_1\neq \emptyset$, the iterated
%	conditional $(E_2|H_2)|(E_1|H_1)$ is defined as the conditional random quantity
%	\begin{equation}\label{EQ:ITER-COND}
%	(E_2|H_2)|(E_1|H_1) = (E_2|H_2) \wedge (E_1|H_1) + \mu \no{E}_1|H_1,
%	\end{equation}
%	where
%	$\mu =\mathbb{P}[(E_2|H_2)|(E_1|H_1)]$.
%\end{definition}
%\begin{remark}\label{REM:A|H=0}
%	Notice that we assumed that $E_1H_1\neq \emptyset$ to give a nontrivial meaning to the notion of the iterated conditional. Indeed,   if  $E_1H_1$ were equal to $\emptyset$,  that is $E_1|H_1=0$, then it would be the case that $\no{E}_1|H_1 =1$ and  it  would follow  
%	$(E_2|H_2)|(E_1|H_1)=(E_2|H_2)|0=(E_2|H_2) \wedge (E_1|H_1) + \mu \no{E}_1|H_1=\mu$; that is, $(E_2|H_2)|(E_1|H_1)$ 
%	would coincide with the (indeterminate) value $\mu$. Similarly to the case of a conditional event $E|H$, which is of no interest when $H=\emptyset$, the iterated conditional $(E_2|H_2)|(E_1|H_1)$ is not considered  in our approach when $E_1H_1=\emptyset$.
%\end{remark}
\begin{definition}\label{DEF:GENITER}
	Let be given  $n+1$ conditional events $E_1|H_1, \ldots, E_{n+1}|H_{n+1}$, with $(E_1|H_1) \wedge \cdots \wedge (E_n|H_n)\neq 0$. We denote by $(E_{n+1}|H_{n+1})|((E_1|H_1) \wedge \cdots \wedge (E_n|H_n))=(E_{n+1}|H_{n+1})|\C_n$ the random quantity
	\[
	\begin{array}{ll}
	(E_1|H_1) \wedge \cdots \wedge (E_{n+1}|H_{n+1}) + \mu (1-(E_1|H_1) \wedge \cdots \wedge (E_n|H_n)) =\\
	=\C_{n+1}+\mu (1-\C_n),
	\end{array}
	\]
	where $\mu = \prev[(E_{n+1}|H_{n+1})|\C_n]$.
\end{definition}
We observe that, based on the betting metaphor,  the quantity $\mu$ is the amount to be paid in order to receive the amount $\C_{n+1}+\mu (1-\C_n)$. Definition \ref{DEF:GENITER} generalizes the notion  of iterated conditional $(E_{2}|H_{2})|(E_1|H_1)$ given in 
previous papers (see, e.g., \cite{GiSa13c,GiSa13a,GiSa14}). 
We also observe that, defining 	$\prev(\C_n)=z_{n}$ and 
$\prev(\C_{n+1})=z_{n+1}$, by the linearity of prevision it holds that $\mu=z_{n+1}+\mu(1-z_{n})$; then, $z_{n+1}=\mu z_n$, that is 
$\prev(\C_{n+1})=\prev[(E_{n+1}|H_{n+1})|\C_n]\prev(\C_n)$, which is  the compound prevision theorem.

By applying Definition \ref{DEF:GENITER} with $n=2$, given a 
p-consistent family $\{E_1|H_1, E_2|H_2\}$ and a further event $E_3|H_3$,  it can  be proved that (\cite{GiPS18wp})
\[
\{E_1|H_1, E_2|H_2\} \mbox{ p-entails } E_3|H_3\;\; \Longleftrightarrow \;\;(E_3|H_3)|((E_1|H_1)\wedge(E_2|H_2))=1\,,
\]
that is: $\{E_1|H_1, E_2|H_2\}$ p-entails $E_3|H_3$ if and only if  the iterated conditional $(E_3|H_3)|((E_1|H_1)\wedge(E_2|H_2))$ is constant and  equal to~1. 
%For instance, in the 
% And rule, where the  premise set is  $\{B|A, C|A\}$ and the conclusion is  $BC|A$, defining $P(BC|A)=x$ and $\prev[(BC|A)|((B|A)\wedge (C|A))]=\mu$, it holds that
%\[
%(BC|A)|((B|A)\wedge (C|A))=(BC|A)|(BC|A)=BC|A+\mu(1-BC|A),
%\]
%and  by linearity  of prevision $\mu=x+\mu(1-x)$. Then,
%\[
%\begin{array}{l}
%(BC|A)|((B|A)\wedge (C|A))
%=\left\{\begin{array}{ll}
%1, &\mbox{if $ABC$ is true,}\\
%\mu, &\mbox{if $A\no{B}\vee A\no{C}$ is true,}\\
%x+\mu(1-x), &\mbox{if $\no{A}$ is true,}\\
%\end{array}
%\right.
%\\
%=
%\left\{\begin{array}{ll}
%1, &\mbox{if $ABC$ is true,}\\
%\mu, &\mbox{if $\no{ABC}$  is true,}\\
%\end{array}
%\right.
%\end{array}
%\]
%We observe that $P(ABC|ABC)=1$, then
%\[
%ABC|ABC=\left\{\begin{array}{ll}
%1, &\mbox{if $ABC$ is true,}\\
%1, &\mbox{if $\no{ABC}$  is true,}\\
%\end{array}
%\right.
%\]
%
%$ABC+P(ABC|ABC)(1-\no{ABC})=ABC+(1-\no{ABC})=1$.
%In order for $\mu$ to be coherent, $\mu$ must belong to the convex hull of the set $\{1\}$; i.e. $\mu=1$. Thus $(BC|A)|((B|A)\wedge (C|A))=1$. \\
%
%
\section{From  non p-valid to p-valid inference rules}
\label{SECT:NPV-PV}
In this section we first  examine some non p-valid inference rules, by showing that conditions $(ii)$ and $(iii)$ of Theorem \ref{THM:PENT} are not satisfied.   Then, we  illustrate by an example two different methods which allow to  get p-valid inference rules starting by non p-valid ones.
\subsection{Some non p-valid inference rules}
 We start by showing that Transitivity is not p-valid.
\paragraph{Transitivity.}
In this rule  the p-consistent premise set is $\{C|B,B|A\}$ and the conclusion is 
$C|A$. The rule is not p-valid (\cite{gilio16}), indeed we can show that 
\[
(C|B)\wedge(B|A)\wedge (C|A)\neq (C|B)\wedge(B|A) \mbox{ and } (C|B)\wedge(B|A)\nleq C|A.
\] 
Defining $P(B|A)=x$, $P(BC|A)=y$, $P(C|A)=t$, $\prev[(C|B)\wedge(B|A)\wedge (C|A)]=\mu$, $\prev[(C|B)\wedge(B|A)]=z$, 
we have 
\begin{equation}\label{EQ:TR1}
(C|B)\wedge(B|A)\wedge (C|A)=(C|B)\wedge(BC|A)=
\left\{\begin{array}{ll}
1, &\mbox{if $ABC$ is true,}\\	
0, &\mbox{if $AB\no{C}$ is true,}\\
0, &\mbox{if $A\no{B}C$ is true,}\\
0, &\mbox{if $A\no{B}\no{C}$ is true,}\\
y, &\mbox{if $\no{A}BC$ is true,}\\	
0, &\mbox{if $\no{A}B\no{C}$ is true,}\\
\mu, &\mbox{if $\no{A}\no{B}$ is true,}\\	
\end{array}
\right.
\end{equation} 
and
\begin{equation}\label{EQ:TR2}
(C|B)\wedge(B|A)=
\left\{\begin{array}{ll}
1, &\mbox{if $ABC$ is true,}\\	
0, &\mbox{if $AB\no{C}$ is true,}\\
0, &\mbox{if $A\no{B}C$ is true,}\\
0, &\mbox{if $A\no{B}\no{C}$ is true,}\\
x, &\mbox{if $\no{A}BC$ is true,}\\	
0, &\mbox{if $\no{A}B\no{C}$ is true,}\\
z, &\mbox{if $\no{A}\no{B}$ is true.}\\	
\end{array}
\right.
\end{equation} 
Then, as (in general) $x\neq y$, it holds that $(C|B)\wedge(B|A)\wedge (C|A)\neq (C|B)\wedge(B|A)$, so that  condition $(ii)$ is not satisfied. 
Moreover, 
\begin{equation}\label{EQ:TR3}
C|A=
\left\{\begin{array}{ll}
1, &\mbox{if $ABC$ is true,}\\	
0, &\mbox{if $AB\no{C}$ is true,}\\
0, &\mbox{if $A\no{B}C$ is true,}\\
0, &\mbox{if $A\no{B}\no{C}$ is true,}\\
t, &\mbox{if $\no{A}BC$ is true,}\\	
t, &\mbox{if $\no{A}B\no{C}$ is true,}\\
t, &\mbox{if $\no{A}\no{B}$ is true.}\\	
\end{array}
\right.
\end{equation} 
Then, by observing  that (in general) $x\nleq t$ it follows that 
$(C|B)\wedge(B|A)\nleq C|A$, 
so that  condition $(iii)$ is not satisfied. 
Therefore, Transitivity rule is not p-valid.
\paragraph{Denial of the antecedent.}
We consider the rule where the premise set is $\{\no{A},C|A\}$ and the conclusion is $\no{C}$. 
The premise set  $\{\no{A},C|A\}$ is p-consistent because, by applying the Algorithm \ref{ALG-PREV-INT}, the assessment $P(\no{A})=P(C|A)=1$ is coherent. We verify that $\no{A}\wedge (C|A)\wedge \no{C}\neq \no{A}\wedge (C|A)$ and that $\no{A}\wedge (C|A)\nleq \no{C}$, that is  the Denial of the antecedent is not p-valid.
We set $P(C|A)=y$, then
\[
\no{A}\wedge (C|A)\wedge\no{C}=
 \left\{\begin{array}{ll}
	0, &\mbox{if $A$ is true,}\\
	0, &\mbox{if $\no{A}C$ is true,}\\
	y, &\mbox{if $\no{A}\no{C}$ is true,}\\
	\end{array}
	\right.
\] 
and
\[
\no{A}\wedge (C|A)=
 \left\{\begin{array}{ll}
	0, &\mbox{if $A$ is true,}\\
	y, &\mbox{if $\no{A}C$ is true,}\\
	y, &\mbox{if $\no{A}\no{C}$ is true.}\\
	\end{array}
	\right.
\] 
Assuming $y>0$, when $\no{A}C$ is true it holds that   
\[
\no{A}\wedge (C|A)\wedge \no{C}=0< y= \no{A}\wedge (C|A),\;\;\;\no{A}\wedge (C|A)=y> 0=\no{C},
\]
thus: $\no{A}\wedge (C|A)\wedge \no{C}\neq \no{A}\wedge (C|A)$ and $\no{A}\wedge (C|A)\nleq \no{C}$, that is  conditions $(ii)$ and $(iii)$ are not satisfied. 
\paragraph{Affirmation of the consequent.}
We consider the rule where the (p-consistent) premise set is $\{C,C|A\}$ and the conclusion is $A$. We verify that  $C\wedge (C|A)\wedge A\neq C\wedge (C|A)$ and $C\wedge (C|A)\nleq A$, that is the \emph{Affirmation of the consequent} rule is not p-valid.
We set $P(C|A)=y$, then
\[
C\wedge (C|A)\wedge A=AC=
 \left\{\begin{array}{ll}
	1, &\mbox{if $AC$ is true,}\\
	0, &\mbox{if $\no{AC}$ is true,}\\
	\end{array}
	\right.
\] 
and
\begin{equation}
\label{EQ:CandCgA}
C\wedge (C|A)=
 \left\{\begin{array}{ll}
	1, &\mbox{if $AC$ is true,}\\
	0, &\mbox{if $\no{C}$ is true,}\\
	y, &\mbox{if $\no{A}C$ is true.}\\
	\end{array}
	\right.
\end{equation}
Assuming $y>0$, when $\no{A}C$ is true it holds that   
\[
C\wedge (C|A)\wedge A=0< y= C\wedge (C|A),\;\;\;C\wedge (C|A)=y> 0=A,
\]
thus: $C\wedge (C|A)\wedge A\neq C\wedge (C|A)$ and $C\wedge (C|A)\nleq A$, that is  conditions $(ii)$ and $(iii)$ are not satisfied.  
\begin{remark} We now will make a comparison between the two objects $C\wedge (C|A)$  and $C|(A\vee \no{C})$, by showing they do not coincide.
Defining $P(C|(A\vee \no{C}))=t$, it holds that
\begin{equation}\label{EQ:CgAnoC}
C|(A\vee \no{C} )=
	\left\{\begin{array}{ll}
		1, &\mbox{if $AC$ is true,}\\
		0, &\mbox{if $\no{C}$ is true,}\\
		t, &\mbox{if $\no{A}C$ is true.}\\
		\end{array}
		\right.
\end{equation}
It could seem, from (\ref{EQ:CandCgA}) and (\ref{EQ:CgAnoC}),  that $y$ and $t$ should be equal and then $C\wedge (C|A)$ and $C|(A\vee \no{C})$ should coincide. However, in this case the conditioning event for $C\wedge (C|A)$ is $\Omega \vee A=\Omega$, so that  the disjunction of the conditioning events is $\Omega \vee (A\vee \no{C})=\Omega$;  the two objects $C\wedge (C|A)$ and $C|(A\vee \no{C})$ do not coincide conditionally on $\Omega$; then   $C\wedge (C|A)$  and $C|(A\vee \no{C})$ do not coincide (condition $(i)$ of  Theorem \ref{THM:EQ-CRQ} is not satisfied). We also observe that,
defining  $\prev(C\wedge (C|A))=\mu$,  (in general) $\mu$ does not belong to the set   $\{1,0,y\}$ of possible values of $C\wedge (C|A)$, because $\mu$ is a linear convex combination of the values  $\{1,0,y\}$. As a  further aspect,  we verify below that $t\leq \mu\leq y$. 
The constituents generated by $\{A,C\}$ are: $AC, A\no{C}, \no{A}C, \no{A}\no{C}$;  then, the associated values for the random vector $(C|(A \vee \no{C}), C \wedge (C|A), C|A)$ are
\begin{equation}\label{EQ:SETVALUES}
(1,1,1) \,,\;\;\; (0,0,0) \,,\;\;\; (t,y,y) \,,\;\;\; (0,0,y).
\end{equation}
Based on Theorem \ref{THM:INEQ-CRQ}, we observe that:\\ 
$\bullet$ $C|(A \vee \no{C}) \leq C|A$ conditionally on $A \vee \no{C}\vee A=A \vee \no{C}$, 
 hence   $P(C|(A \vee \no{C}))=t \leq y=P(C|A)$; \\  
$\bullet$ $C \wedge (C|A) \leq C|A$ conditionally on $\Omega \vee A  =\Omega$, hence  $\prev(C \wedge (C|A))=\mu \leq y=P(C|A)$; \\ 
$\bullet$ $C|(A \vee \no{C}) \leq C \wedge (C|A)$ conditionally on $A \vee \no{C} \vee \Omega  =\Omega$, hence $P(C|(A \vee \no{C}))=t \leq \mu=\prev(C \wedge (C|A))$. \\
 In other words: $t\leq \mu\leq y$. We observe that these inequalities also follow because coherence requires that the prevision point $(t,\mu,y)$  must be a linear convex combination of 
 points in ($\ref{EQ:SETVALUES}$). 
\end{remark}
\paragraph{On combining evidence: An example from Boole.} 
We now examine an example studied in \cite[p. 632]{boole_1857} (see also \cite[Theorem 5.45]{hailperin96}), where p-entailment does not hold. Indeed, it can be proved that  the extension $w=P(C|AB)$ of any (coherent)  assessment  $(x,y)$ on $\{C|A,C|B\}$ is coherent for every $w\in[0,1]$. 
Using conditions $(ii)$ and $(iii)$ of Theorem \ref{THM:PENT}, we show that the p-consistent family  $\{C|A, C|B\}$ does not p-entail $C|AB$. 
 We set $P(C|A)=x$, $P(C|B)=y$, and $\prev((C|A) \wedge (C|B))=z$; then, 
 \begin{equation}\label{EQ:CgACgBbis}
 (C|A) \wedge (C|B) = \left\{\begin{array}{ll}
 	1, &\mbox{if $ABC$ is true,}\\	
 	0, &\mbox{if $AB\no{C}$ is true,}\\
 	0, &\mbox{if $A\no{B}\no{C}$ is true,}\\
 	0, &\mbox{if $\no{A}B\no{C}$ is true,}\\
 	x, &\mbox{if $\no{A}BC$ is true,}\\	
 	y, &\mbox{if $A\no{B}C$ is true,}\\
 	z, &\mbox{if $\no{A}\no{B}$ is true.}\\	
 	\end{array}
 	\right.= \left\{\begin{array}{ll}
 1, &\mbox{if $ABC$ is true,}\\
 0, &\mbox{if $(A\vee B) C$  is true,}\\
 x, &\mbox{if $\no{A}BC$ is true,}\\
 y, &\mbox{if $A\no{B}C$ is true,}\\
 z, &\mbox{if $\no{A}\no{B}$ is true.}
 \end{array}
 \right.
 \end{equation}
 Moreover, by defining $ \prev[(C|A)\wedge (C|AB)]=u$, $ \prev[(C|B)\wedge (C|AB)]=v$ and 
  $\prev[(C|A) \wedge (C|B)\wedge (C|AB) ]=t$, we obtain
 \[
(C|A) \wedge (C|B)\wedge (C|AB) = 
 \left\{\begin{array}{ll}
 1, &\mbox{if $ABC$ is true,}\\
 0, &\mbox{if $(A\vee B)\no{C}$ is true,}\\
u, &\mbox{if $\no{A}BC$ is true,}\\
v, &\mbox{if $A\no{B}C$ is true,}\\
 t, &\mbox{if $\no{A}\no{B}$ is true.}
 \end{array}
 \right.
 \]
 As in general $x\neq u$ and $y\neq v$, then $(C|A) \wedge (C|B)\wedge (C|AB)$ and $(C|A) \wedge (C|B)$ do not coincide, so that   condition $(ii)$ is not satisfied. Moreover, 
 \begin{equation}\label{EQ:CgAB}
C|AB=
\left\{\begin{array}{ll}
 	1, &\mbox{if $ABC$ is true,}\\	
 	0, &\mbox{if $AB\no{C}$ is true,}\\
 	w, &\mbox{if $A\no{B}\no{C}$ is true,}\\
 	w, &\mbox{if $\no{A}B\no{C}$ is true,}\\
 	w, &\mbox{if $\no{A}BC$ is true,}\\	
 	w, &\mbox{if $A\no{B}C$ is true,}\\
 	w, &\mbox{if $\no{A}\no{B}$ is true,}\\	
 	\end{array}
 	\right.
=\left\{\begin{array}{ll}
 1, &\mbox{if $ABC$ is true,}\\
 0, &\mbox{if $AB\no{C}$ is true,}\\
 w, &\mbox{if $\no{AB}$ is true.}\\
 \end{array}
 \right.
 \end{equation}
Based on  (\ref{EQ:CgACgBbis}) and (\ref{EQ:CgAB}),  we can see that $ (C|A) \wedge (C|B) \nleq C|(AB)$, so that   condition $(iii)$ is not satisfied.  Thus, the inference from $\{C|A, C|B\}$ to $C|AB$ is not p-valid.
%\subsection{A procedure for the checking of p-entailment}
%Let be given a p-consistent family of $n$ conditional events $\F=\{E_1|H_1,\ldots, E_n|H_n\}$ and a further conditional event $E_{n+1}|H_{n+1}$.  We denote by $C_0,\ldots, C_m$ the constituents generated by $\F\}$, where $C_0=\no{H}_1 \no{H}_2\cdots  \no{H}_{n}$.
% Then, we observe that  condition $(iii)$ of Theorem \ref{THM:PENT} is trivially satisfied  when $H_{n+1}\subseteq E_{n+1}$, in which case $E_{n+1}|H_{n+1}$ is constant and coincide with 1.
%Assuming that $H_{n+1}\nsubseteq E_{n+1}$, the 
%  condition $(iii)$ of Theorem \ref{THM:PENT} is satisfied if and only if all the following conditions are satisfied:\\
%a) $\bigwedge_{i=1}^n E_{i}H_i \subseteq  E_{n+1}H_{n+1}$;\\
%b) $ \no{E}_{n+1}H_{n+1}\subseteq  \bigvee_{i=1}^n \no{E}_{i}H_i $;\\
%c) for each constituent $C_h$ such that the associated value of $\C_n$ is $x_S=\prev[\bigwedge_{i\in S} (E_{i}|H_i)]$, for some nonempty strict subset  $S$ of $\{1,2,\ldots,n\}$, it holds that $C_h\subseteq (E_{n+1}H_{n+1} \vee \no{H}_{n+1} )$,
%$(\no{E}_{n+1}H_{n+1} \subseteq \no{C}_h )$
\subsection{Two methods for constructing  p-valid inference rules}
We now illustrate by an example two different methods by means of which, starting by a non p-valid inference rule, we get p-valid inference rules: a) to add  a suitable premise; b) to add a suitable logical constraint. The further premise, or logical constraint, (must  preserve p-consistency and) is  determined by analyzing the possible values of conjunctions.
\paragraph{Weak Transitivity.}
In our example we start by the (non p-valid) Transitivity rule where the premise set is $\{C|B,B|A\}$ and the conclusion is $C|A$.\\
Method a). We add the premise $A|(A\vee B)$, so that 
 the premise set is $\{C|B,B|A, A|(A\vee B)\}$, while the conclusion is still
$C|A$. The premise set $\{C|B,B|A, A|(A\vee B)\}$ is p-consistent; indeed as $ABC\neq \emptyset$, by evaluating $P(ABC)=1$ we get $P(C|B)=P(B|A)=P(A|(A\vee B))=1$. 
 We  show that $(C|B)\wedge(B|A)\wedge (A|(A\vee B))\wedge (C|A)= (C|B)\wedge(B|A)\wedge (A|(A\vee B))$ and 
$(C|B)\wedge(B|A)\wedge (A|(A\vee B))\leq C|A$. 

Defining $\prev[(C|B)\wedge(B|A)\wedge (A|(A\vee B))\wedge (C|A)]=\mu$,  
we have 
\[
(C|B)\wedge(B|A)\wedge (A|(A\vee B))\wedge (C|A)=
\left\{\begin{array}{ll}
1, &\mbox{if $ABC$ is true,}\\	
0, &\mbox{if $AB\no{C}$ is true,}\\
0, &\mbox{if $A\no{B}C$ is true,}\\
0, &\mbox{if $A\no{B}\no{C}$ is true,}\\
0, &\mbox{if $\no{A}BC$ is true,}\\	
0, &\mbox{if $\no{A}B\no{C}$ is true,}\\
\mu, &\mbox{if $\no{A}\no{B}$ is true.}\\	
\end{array}
\right.
\] 
Moreover, defining $\prev[(C|B)\wedge(B|A)\wedge (A|(A\vee B))]=z$, we have 
\[
(C|B)\wedge(B|A)\wedge (A|(A\vee B))=
\left\{\begin{array}{ll}
1, &\mbox{if $ABC$ is true,}\\	
0, &\mbox{if $AB\no{C}$ is true,}\\
0, &\mbox{if $A\no{B}C$ is true,}\\
0, &\mbox{if $A\no{B}\no{C}$ is true,}\\
0, &\mbox{if $\no{A}BC$ is true,}\\	
0, &\mbox{if $\no{A}B\no{C}$ is true,}\\
z, &\mbox{if $\no{A}\no{B}$ is true.}\\	
\end{array}
\right.
\]
Conditionally on $A\vee B$ being true it holds that $(C|B)\wedge(B|A)\wedge (A|(A\vee B))\wedge (C|A)=(C|B)\wedge(B|A)\wedge (A|(A\vee B))=ABC|(A\vee B)$. Then,  by Theorem \ref{THM:EQ-CRQ} we have $(C|B)\wedge(B|A)\wedge (A|(A\vee B))\wedge (C|A)=(C|B)\wedge(B|A)\wedge (A|(A\vee B))=ABC|(A\vee B)$,
so that  condition $(ii)$ is satisfied. 
Finally, as $ABC|(A\vee B)\subseteq C|A$, it holds that 
$(C|B)\wedge(B|A)\wedge (A|(A\vee B))=ABC|(A\vee B)\leq C|A$,
so that  condition $(iii)$ is satisfied.   Therefore  this Weak Transitivity rule is p-valid. We observe that another p-valid version of Weak Transitivity  would be obtained by adding the premise  $A|B$ instead of $A|(A\vee B)$. \\
Method b).  We add the logical constraint $\no{A}BC=\emptyset$, that is $BC\subseteq A$.
The p-consistency of the premise set $\{C|B,B|A\}$ is preserved because, as before  $ABC\neq \emptyset$ and by evaluating $P(ABC)=1$ we get $P(C|B)=P(B|A)=1$. 
Based on (\ref{EQ:TR1}), (\ref{EQ:TR2}) , (\ref{EQ:TR3}) it holds that 
\begin{equation*}
(C|B)\wedge(B|A)\wedge (C|A)=(C|B)\wedge(BC|A)=
\left\{\begin{array}{ll}
1, &\mbox{if $ABC$ is true,}\\	
0, &\mbox{if $AB\no{C}$ is true,}\\
0, &\mbox{if $A\no{B}C$ is true,}\\
0, &\mbox{if $A\no{B}\no{C}$ is true,}\\
0, &\mbox{if $\no{A}B\no{C}$ is true,}\\
\mu, &\mbox{if $\no{A}\no{B}$ is true,}\\	
\end{array}
\right.
\end{equation*} 
and
\begin{equation*}
(C|B)\wedge(B|A)=
\left\{\begin{array}{ll}
1, &\mbox{if $ABC$ is true,}\\	
0, &\mbox{if $AB\no{C}$ is true,}\\
0, &\mbox{if $A\no{B}C$ is true,}\\
0, &\mbox{if $A\no{B}\no{C}$ is true,}\\
0, &\mbox{if $\no{A}B\no{C}$ is true,}\\
z, &\mbox{if $\no{A}\no{B}$ is true.}\\	
\end{array}
\right.
\end{equation*} 
As we can see  $(C|B)\wedge(B|A)\wedge (C|A)= (C|B)\wedge(B|A)$ conditionally on $A\vee B$ being true. Then, by Theorem \ref{THM:EQ-CRQ}   condition $(ii)$ is  satisfied. 
Moreover, 
\begin{equation*}
C|A=
\left\{\begin{array}{ll}
1, &\mbox{if $ABC$ is true,}\\	
0, &\mbox{if $AB\no{C}$ is true,}\\
0, &\mbox{if $A\no{B}C$ is true,}\\
0, &\mbox{if $A\no{B}\no{C}$ is true,}\\
t, &\mbox{if $\no{A}BC$ is true,}\\	
t, &\mbox{if $\no{A}\no{B}$ is true.}\\	
\end{array}
\right.
\end{equation*} 
Then, $(C|B)\wedge(B|A)\leq C|A$ conditionally on $A\vee B$ being true.  Thus, by Theorem \ref{THM:INEQ-CRQ} 
 condition $(iii)$ is satisfied too. Therefore,  under the logical constraint $\no{A}BC=\emptyset$, the family $\{C|B,B|A\}$ p-entails $C|A$, which is another p-valid version of  Weak Transitivity.\\ We observe that  in \cite[Theorem 5]{gilio16} it has been shown that another p-valid  version of Weak Transitivity  is  obtained by adding  the probabilistic constraint
	$P(A|(A\vee B))>0$, that is  
	\begin{equation*}\label{EQ:TRANSEI}
	P(C|B)=1, P(B|A)=1, P(A|(A\vee B))>0\, \Longrightarrow\, P(C|A)=1.
	\end{equation*}
\section{Conclusions}
\label{SECT:CONCLUSIONS}
We generalized the notions of conjunction and disjunction of two conditional events to the case of $n$ conditional events. We  introduced the notion of negation and we showed that De Morgan’s Laws still hold.  We also verified that the associative and commutative properties are satisfied. 
We studied the monotonicity property, by proving that $\C_{n+1}\leq \C_n$ and   $\D_{n+1}\geq \D_n$ for every $n$.
We computed the set of all coherent assessments on  the family $\{\C_n,E_{n+1}|H_{n+1}, \C_{n+1}\}$,
by showing that Fr\'echet-Hoeffding bounds still hold in this case; then,  we  examined the (reverse) probabilistic  inference from $\C_{n+1}$ to the family $\{\C_n,E_{n+1}|H_{n+1}\}$.
Moreover, given a family $\F=\{E_1|H_1,E_2|H_2,E_3|H_3\}$ of three conditional events, 
with $E_1, E_2, E_3, H_1, H_2, H_3$ logically independent,
we determined the set $\Pi$ of all coherent prevision assessments  for the  set of conjunctions $\{\C_{S}: \emptyset \neq S \subseteq \{1,2,3\}\}$. In particular, we verified that the set $\Pi$ is the same in the case where $H_1=H_2=H_3$ and we 
also  considered the relation  between  conjunction and quasi-conjunction. 
By using  conjunction we also characterized  p-consistency and p-entailment; then, we  examined several examples  of p-valid inference rules. We  briefly described a characterization of p-entailment, in the case of two premises, by using a suitable notion of iterated conditioning. Then, after examining some non p-valid inference rules,  we illustrated by an example two methods for constructing  p-valid inference rules. In particular, we applied these methods to Transitivity  by obtaining  p-valid  versions of  the rule (Weak Transitivity). 
Future work could concern the extension of the results of this paper to more complex cases, with  possible applications to the psychology of cognitive reasoning under uncertainty. This work should lead, for instance, to further developments of the results given in \cite{SaPG17,SPOG18}.
\section*{Acknowledgements}
%{\bf Acknowledgments.}
We thank the anonymous referees for their useful criticisms and suggestions.
% \emph{DFG},  \emph{FMSH}, and \emph{Villa Vigoni} for supporting joint meetings at Villa Vigoni where parts of this work originated (Project: ``Human Rationality: Probabilistic Points of View'').
%This work  is partially supported by the INdAM--GNAMPA Project 2016, U 2016/000391.
\appendix
\section{Appendix}
\label{SECT:APPENDIXA}
\begin{proof}\emph{of Theorem \ref{THM:DEMORGAN}.}\\
	We observe that $(ii)$ follows by $(i)$, by replacing $\F$ by $\no{\F}$; indeed,  by $(i)$ it holds that
	$\no{\D(\no{\F})}=\C(\no{\no{\F}})=\C(\F)$. Then, it 
	is enough to proof the assertion $(i)$.
	We will prove the assertion by induction. \\
	Step 1: $n=1, \F=\{E_1|H_1\}$. \\
	We have $\no{\D(\F)}=\no{E_1|H_1}=1-E_1|H_1=\no{E}_1|H_1=\C(\no{\F})$.\\ 
	Thus the assertion holds when $n=1$.\\
	Step 2: $n=2$,  $\F=\{E_1|H_1,E_2|H_2\}$.  \\
	We set 
	\[
	P(E_1|H_1)=x,\, P(E_2|H_2)=y,\, \prev[(E_1|H_1) \vee (E_2|H_2)]=w, \,\prev[(\no{E}_1|H_1) \wedge  (\no{E}_2|H_2)]=t.
	\] 
	We observe that the family $\{
	E_1H_1\vee E_2H_2,
	\no{E}_1H\no{E}_2H_2,
	\no{H}_1\no{E}_2H_2,
	\no{E}_1H_1\no{H}_2,
	\no{H}_1\no{H}_2\}$ is a partition of the sure event $\Omega$. Moreover, by Definitions \ref{CONJUNCTION} and \ref{DEF:DISJUNCTION}  we have
	\begin{equation}\label{}
	\no{\D(\F)}=1-(E_1|H_1)\vee (E_2|H_2) =\left\{\begin{array}{ll}
	0, &\mbox{ if  $E_1H_1\vee E_2H_2$ is true,}\\
	1, &\mbox{ if $\no{E}_1H_1\no{E}_2H_2$  is true,}\\
	1-x, &\mbox{ if $\no{H}_1\no{E}_2H_2$ is true,}\\
	1-y, &\mbox{ if $\no{E}_1H_1\no{H}_2$ is true,}\\
	1-w, &\mbox{ if $\no{H}_1\no{H}_2$ is true}.
	\end{array}
	\right.
	\end{equation}
	and
	\begin{equation}\label{}
	\C(\no{\F})=(\no{E}_1|H_1)\wedge (\no{E}_2|H_2) =
	\left\{\begin{array}{ll}
	0, &\mbox{ if  $E_1H_1\vee E_2H_2$ is true,}\\
	1, &\mbox{ if $\no{E}_1H\no{E}_2H_2$  is true,}\\
	1-x, &\mbox{ if $\no{H}_1\no{E}_2H_2$ is true,}\\
	1-y, &\mbox{ if $\no{E}_1H_1\no{H}_2$ is true,}\\
	t, &\mbox{ if $\no{H}_1\no{H}_2$ is true}.
	\end{array}
	\right.
	\end{equation}
	We observe that $\no{\D(\F)}$ and $
	{\C(\no{\F})}$  coincide when $H_1\vee H_2$ is true. Thus, by Theorem \ref{THM:EQ-CRQ},	$\prev(\no{\D(\F)})=\prev(\C(\no{\F}))$  and hence $1-w=t$. Therefore $\no{\D(\F)}$  still coincides  with $
	\C(\no{\F})$  when $H_1\vee H_2$ is false, so that $\no{\D(\F)}=
	\C(\no{\F})$.	
	\\
	Step 3: $\F=\{E_1|H_1,E_2|H_2,\ldots, E_n|H_n\}$.\\
	(\emph{Inductive Hypothesis}) Let us assume that for any (strict) subset $S\subset\{1,\ldots,n\}$,  
	by defining $\F_S=\{E_i|H_i, i\in S\}$,
	it holds that
	$\no{{\D(\F_S)}}=\C(\no{\F_S})$.
	Now we will prove that $\no{{\D(\F_S)}}=\C(\no{\F_S})$ when $S=\{1,\ldots,n\}$, in which case  $\F_S=\F$.
	By Definition \ref{DEF:DISJUNCTIONn} we have 
	\begin{equation}\label{}
	\begin{array}{lll}
	\no{\D(\F)}=\sum_{h=0}^{m} w_{h}C_h,
	&\mbox{where} &
	w_{h}=
	\left\{
	\begin{array}{llll}
	0, &\mbox{ if } S_h'\neq \emptyset,\\
	1, &\mbox{ if } S_h''=\{1,2,\ldots,n\}, \\
	1-y_{S'''_h}, &\mbox{ if } S_h'=\emptyset \mbox{ and }  S_h''' \neq\emptyset. 
	\end{array}
	\right.
	\end{array}
	\end{equation}
	We continue to use the subsets  $S_h', S_h''$,  $S_h'''$  as defined in formula (\ref{EQ:ESSE}) also with the family $\no{\F}$; moreover we 
	set 	 $t_{S}=\prev[\bigwedge_{i\in S} (\no{E}_i|H_i)]=\prev[{\C(\no{\F}_{S})}]$. Based on Definition \ref{DEF:CONJUNCTIONn},
	we have
	\begin{equation}\label{}
	\begin{array}{lll}
	\C(\no{\F})=\sum_{h=0}^{m} z_{h}C_h,
	&\mbox{where} &
	z_{h}=
	\left\{
	\begin{array}{llll}
	0, &\mbox{ if } S_h'\neq \emptyset,\\
	1, &\mbox{ if } S_h''=\{1,2,\ldots,n\}, \\
	t_{S'''_h}, &\mbox{ if } S_h'=\emptyset \mbox{ and }  S_h''' \neq\emptyset.
	\end{array}
	\right.
	\end{array}
	\end{equation}
	Then, 	$\no{\D(\F)}-\C(\no{\F})=\sum_{h=0}^{m} (w_{h}-z_h)C_h$, where
	\begin{equation}\label{}
	w_{h}-z_h=
	\left\{
	\begin{array}{llll}
	0, &\mbox{ if } S_h'\neq \emptyset,\\
	0, &\mbox{ if } S_h''=\{1,2,\ldots,n\}, \\
	1-y_{S'''_h}-t_{S'''_h}, &\mbox{ if } S_h'=\emptyset \mbox{ and }  S_h''' \neq\emptyset. 
	\end{array}
	\right.
	\end{equation}
	By the inductive hypothesis,  it holds that
	$1-y_{S'''_h}=\prev[\no{{\D(\F_{{S'''_h}})}}]=\prev[\C(\no{\F}_{{S'''_h}})]=t_{S'''_h}$ 
	for $h=1,\ldots,m$, because  $S'''_h\subset\{1,2,\ldots,n\}$. 
	Then, 	$\no{\D(\F)}-\C(\no{\F})=\sum_{h=0}^{m} (w_{h}-z_h)C_h$, where
	\begin{equation}\label{}
	w_{h}-z_h=
	\left\{
	\begin{array}{llll}
	0, & h=1,\ldots,m,\\
	1-y_{S'''_0}-t_{S'''_0}, & h=0.
	\end{array}
	\right.
	\end{equation}	
	By recalling that $S'''_0=\{1,2,\ldots,n\}$,  $\no{\D(\F)}$ and $
	{\C(\no{\F})}$  coincide when $H_1\vee H_2 \vee \cdots \vee H_n$ is true. Thus, by Theorem \ref{THM:EQ-CRQ}, 	$\prev[\no{\D(\F)}]=\prev[\C(\no{\F}))]$, that is $1-y_{S'''_0}=t_{S'''_0}$. Therefore $\no{\D(\F)}$ still coincides with $
	\C(\no{\F})$  when $H_1\vee H_2 \vee \cdots \vee H_n$ is false, so that $\no{\D(\F)}=
	\C(\no{\F})$.
	\qed
\end{proof}
\begin{proof}\emph{of Theorem \ref{THM:INEQ-CRQ}.}\\
	(i) Assume that, for every $(\mu,\nu)\in \Pi$,  the values of $X|H$ and $Y|K$ associated with the  constituent $C_h$ are such that $X|H\leq Y|K$, for each $C_h$ contained in  $H\vee K$; then  for each given coherent assessment $(\mu,\nu)$,  by choosing $s_1=1, s_2=-1$ in the random gain, we have
	\[
	G=H(X-\mu)-K(Y-\nu) = (X|H - \mu) -(Y|K - \nu) = (X|H - Y|K) + (\nu - \mu) \,.
	\]
	Then, by the hypothesis, $\G_{H\vee K}\leq (\nu - \mu)$ and by coherence $0=\pr(\G_{H\vee K}) \leq \nu - \mu\ $. Then   $\mu\leq \nu$, $ \forall (\mu,\nu) \in \Pi$.\\
	
	(ii) By hypothesis, it holds that $(XH+\mu H^c)(H\vee K)\leq (YK+\nu K^c)(H\vee K)$; moreover, from condition (i), $\mu\leq\nu$ for every $(\mu,\nu)\in \Pi$;  then
	\[
	\begin{array}{l}
	X|H = XH+\mu H^c=(XH+\mu H^c)(H\vee K)+(XH+\mu H^c)H^c K^c=\\
	(XH+\mu H^c)(H\vee K)+\mu H^c K^c\leq 	(YK+\nu K^c)(H\vee K)+\nu H^c K^c=\\
	(YK+\nu K^c)(H\vee K)+(YK+\nu K^c)H^c K^c=
	YK + \nu K^c  = Y|K \,.
	\end{array}
	\]
	Vice versa, $X|H\leq Y|K$ trivially implies $X|H\leq Y|K$  when $H\vee K$ is true.
\qed
\end{proof}
%\begin{theorem}\label{THM:MONOTONY}
%	Let be given an arbitrary positive integer $n$ and $n+1$ arbitrary conditional events $E_1|H_1,\ldots,E_{n+1}|H_{n+1}$. For the conjunctions $\mathcal{C}_n$ and $\mathcal{C}_{n+1}$ it holds that  $\mathcal{C}_{n+1} \leq \mathcal{C}_{n}$.
%\end{theorem}
\begin{proof}\emph{of Theorem \ref{THM:MONOTONY}.}\\
	We distinguish three cases: $(a)$ the value of $\C_n$ is $0$, with some $E_i|H_i$ false, $i\leq n$; $(b)$ the value of  $\C_n$ is $1$, with  $E_i|H_i$ true, $i=1,\ldots,n$; $(c)$ the value of $\C_n$ is $\prev[\bigwedge_{i\in S} (E_i|H_i)]=\prev(\C_S)=x_{S}$, for some subset $S\subseteq \{1,2,\ldots,n\}$. \\
	Case $(a)$.  It holds that  $\C_{n+1}=0=\C_n$.  \\
	Case  $(b)$.  The value of $\C_{n+1}$ is $1$, or $0$, or $x_{n+1}$, according to whether $E_{n+1}|H_{n+1}$ is true, or  false, or void; thus $\C_{n+1}\leq \C_n$. \\
	Case $(c)$.
	We distinguish three cases: $(i)$ $E_{n+1}|H_{n+1}$ is true; $(ii)$ $E_{n+1}|H_{n+1}$ is false;
	$(iii)$ $E_{n+1}|H_{n+1}$ is  void.
	In the case $(i)$  the value of $\C_{n+1}$ is $x_{S}$, thus $\C_{n+1}=\C_n$.
	In the case $(ii)$  the value of $\C_{n+1}$ is $0$, thus $\C_{n+1}\leq \C_n$.
	In the case $(iii)$  the value of $\C_{n+1}$ is $x_{S\cup \{n+1\}}=\prev[\bigwedge_{i\in S\cup \{n+1\}}(E_i|H_i)]$;  then, in order  to prove that $\C_{n+1}\leq \C_n$, we need to prove that
	$x_{S\cup \{n+1\}}\leq x_S$. We proceed by induction on the cardinality of $S$, denoted by $s$.
	Let be $s=1$, with $\C_S=E_i|H_i$, for some $i\in\{1,\ldots,n\}$.
	We note that	$x_S=\prev(E_i|H_i)=x_i$, $x_{S\cup \{n+1\}}=\prev((E_i|H_i)\wedge (E_{n+1}|H_{n+1}))=x_{\{i,n+1\}}$ and
	by Theorem \ref{THM:FRECHET} it holds that $x_{S\cup \{n+1\}}=x_{\{i,n+1\}}\leq x_i=x_S$.
	Now, let be $s\geq 2$ and  $x_{S\cup \{n+1\}}\leq x_S$  for every $s< n$, so that,   based on Definition~\ref{DEF:CONJUNCTIONn},  $\C_{n+1}\leq \C_n$ when $S$ is a strict subset of $\{1,2,\ldots,n\}$.
	If $S=\{1,2,\ldots,n\}$, as $E_i|H_i$ is void for all $i=1,\ldots,n+1$, it holds that $\C_{n}=\prev(\C_{n})=x_{\{1,\ldots,n\}}$ and $\C_{n+1}=x_{\{1,\ldots,n+1\}}=\prev(\C_{n+1})$ and,
	in order to prove that $\C_{n+1}\leq \C_n$, it remains to prove that   $\prev(\C_{n+1})\leq \prev(\C_{n})$.
	By applying Theorem~\ref{THM:INEQ-CRQ}, with $X|H=\C_{n+1}=Z_{n+1}|(H_1\vee \cdots \vee H_{n+1})$ and $Y|K=\C_{n}=Z_{n}|(H_1\vee \cdots \vee H_{n})$, as	$\C_{n+1}\leq \C_{n}$	when $H_1\vee \cdots \vee H_{n+1}$ is true (i.e., $s<n$ ),  it follows that $\prev(\C_{n+1})\leq \prev(\C_{n})$; therefore $\C_{n+1}\leq \C_n$.	
	\qed
\end{proof}
%\begin{theorem}	\label{THM:INTCn}
%		Given an arbitrary positive integer $n$ and $n$ arbitrary conditional events $E_1|H_1,\ldots,E_{n}|H_{n}$.
%	It holds that: $(i)$ $\C_n\in [0,1]$; $(ii)$ $\D_n\in [0,1]$.
%\end{theorem}	
\begin{proof}\emph{of Theorem \ref{THM:INTCn}.}\\
	Case $(i)$.
	We proceed by induction.
	The property is satisfied for $n=1$; indeed, if $\C_1=E_1|H_1\in \{1,0,x_1\}$, where $x_1=\prev(E_1|H_1) \in[0,1]$, then $\C_1\in[0,1]$.
	Let us assume  that the property holds for $k<n$, that is  $\C_k\in [0,1]$, for every $k<n$.
	Based on Definition~\ref{DEF:CONJUNCTIONn} we distinguish three cases: $(a)$ the value of $\C_n$ is $0$; $(b)$ the value of  $\C_n$ is $1$; $(c)$ the value of $\C_n$ is $\prev[\bigwedge_{i\in S} (E_i|H_i)]=x_{S}$, for some subset $S\subseteq \{1,2,\ldots,n\}$.
	In the cases $(a)$ and $(b)$,  $\C_n\in [0,1]$. In the case $(c)$, if $S=\{i_1,\ldots,i_k\}\subset \{1,2,\ldots,n\}$, then
	$\C_n \in [0,1]$, because $x_S=\prev(\bigwedge_{j=1}^k (E_{i_j}|H_{i_j}))$  is a possible  value of $\C_k=\bigwedge_{j=1}^k (E_{i_j}|H_{i_j})$, with  $k<n$. Finally, if $S=\{1,2,\ldots,n\}$  (that is the conditioning events $H_1,\ldots, H_n$ are all false), then  $\C_n=\prev(\C_n)$  and $\prev(\C_n)\in [0,1]$ because the values of  $\C_n$ restricted to $H_1\vee \cdots \vee H_n$ all belong to $[0,1]$. Therefore $\C_n\in[0,1]$. By a similar  reasoning, based on Definition~\ref{DEF:DISJUNCTIONn} we can prove that $\D_n\in[0,1]$.
	\qed
\end{proof}
%\begin{theorem}\label{THM:FRECHETCn}
%	Assume that the events $E_{1},\dots,E_{n+1},H_{1},\dots,H_{n+1}$ are logically independent.
%	Let $\I$ be the convex hull  of the points $Q_1=(1,1,1),\; Q_2=(1,0,0),\; Q_3=(0,1,0),\; Q_4=(0,0,0)$.
%	Then, the  assessment $\M=(\mu_n,x_{n+1},\mu_{n+1})$ on the family
%	$\{\C_n, E_{n+1}|H_{n+1}, \C_{n+1}\}$ is coherent if and only if $\M\in \I$, that is if and only if
%	\[
%	(\mu_n,x_{n+1})\in[0,1]^2, \;\;\;\mu_{n+1}'
%	\leq \mu_{n+1}\leq \mu_{n+1}'',
%	\]
%	where  $\mu_{n+1}'=\max\{\mu_n+x_{n+1}-1,0\}$ and $\mu_{n+1}''=
%	\min\{\mu_n,x_{n+1}\}$.
%\end{theorem}
\begin{proof}\emph{of Theorem \ref{THM:FRECHETCn}}.\\
	Let $C_0,\ldots, C_m$, with $m=3^{n}-1$ be the constituents associated with $\F_{n+1}=\{E_1|H_1,\ldots, E_{n+1}|H_{n+1}\}$, where $C_0=\no{H}_1\cdots \no{H}_{n+1}$.
	With each $C_h$, $h=1,\ldots,m$, we associate the point $Q_h=(q_{h1},q_{h2},q_{h3})$, which represents the value of the random vector $(\C_n,E_{n+1}|H_{n+1},\C_{n+1})$ when $C_h$ is  true, where
	$q_{h1}$ is the value of $\C_n$, 
	$q_{h2}$ is the value of $E_{n+1}|H_{n+1}$, and 
	$q_{h3}$ is the value of $\C_{n+1}$. 
	With $C_0$ it is associated the point $Q_0=(\mu_n,x_{n+1},\mu_{n+1})=\M$.
	We observe that the set of points $\{Q_h, h=1,\ldots,m\}$ contains in particular 
	the points 
	\[
	Q_1=(1,1,1) \,,\; Q_2=(1,0,0) \,,\; Q_3=(0,1,0) \,,\; Q_4=(0,0,0)\, ,
	\]
	which are respectively associated with the following constituents or logical disjunction of constituents
	\[ 
	E_{1}H_{1} \cdots E_{n}H_{n} E_{n+1}H_{n+1} \,,\;
	E_{1}H_{1} \cdots E_{n}H_{n} \no{E}_{n+1}H_{n+1} \,,\;
	\]
	\[
	(\no{E}_{1}H_{1} \vee  \cdots \vee \no{E}_{n}H_{n}) \wedge E_{n+1}H_{n+1} \,,\;
	(\no{E}_{1}H_{1} \vee  \cdots \vee \no{E}_{n}H_{n}) \wedge \no{E}_{n+1}H_{n+1} \,.
	\]
	Based on Remark \ref{REM:FRECH}, we need to prove that the set of coherent assessments  $\Pi$ on $\{\C_n, E_{n+1}|H_{n+1}, \C_{n+1}\}$ coincides with the convex hull $\I$ of $Q_1,Q_2,Q_3,Q_4$. We recall that coherence of $(\mu_n,x_{n+1},\mu_{n+1})$ implies coherence of all the sub-assessments on the associated subfamilies of 
	$\{\C_n, E_{n+1}|H_{n+1}, \C_{n+1}\}$. 
	The coherence of the single assessments $\mu_n$ on $\C_n$, or 
	$x_{n+1}$ on $E_{n+1}|H_{n+1}$, or $\mu_{n+1}$ on $\C_{n+1}$, simply amounts to conditions 
	\[
	\mu_n \in [0,1] \,,\; x_{n+1} \in [0,1] \,,\;\mu_{n+1} \in [0,1] \,,
	\]
	respectively. Then, by the  hypothesis of logical independence, the sub-assessment $(\mu_n,x_{n+1})$ is coherent, for every $(\mu_n,x_{n+1}) \in [0,1]^2$. By Remark~\ref{REM:MONOTONY}, the coherence of the sub-assessments  $(\mu_n,\mu_{n+1})$ and $(x_{n+1},\mu_{n+1})$  amounts to the conditions $0\leq \mu_{n+1}\leq\mu_{n} \leq 1$ and $0\leq \mu_{n+1}\leq  x_{n+1}\leq 1$. Finally, assuming that the above conditions are satisfied, 
	to prove coherence of   $(\mu_n,x_{n+1},\mu_{n+1})$, by  Theorem \ref{SYSTEM-SOLV}, it is enough to show that the point $(\mu_n,x_{n+1},\mu_{n+1})$ belongs to the convex hull of the points $Q_1,\ldots,Q_m$.
	Moreover, in order $\M$ belongs to the convex hull of $Q_1,\ldots,Q_m$ 
	the following system $(\Sigma)$ must  solvable 
	\begin{equation} \label{EQ:SIGMACONJ}
	\hspace{1cm}\M=\sum_{h=1}^m\lambda_h Q_h,\;\;\sum_{h=1}^m\lambda_h=1,\;\; \lambda_h\geq 0, \;\forall h.
	\end{equation}
	We  show that  the convex hull of the points $Q_1,\ldots,Q_m$ coincides with the convex hull $\I$ of the points  $Q_1,Q_2,Q_3,Q_4$, described in Remark \ref{REM:FRECH}, because all the other points $Q_5,\ldots,Q_m$, are linear convex combinations of $Q_1,Q_2,Q_3,Q_4$, that is  $Q_h\in\I$ for each $h=5,\ldots,m$.
	
	We  examine the following different cases which depend on the logical value of $E_{n+1}|H_{n+1}$: $a)$
	$E_{n+1}|H_{n+1}$ is true; $b)$
	$E_{n+1}|H_{n+1}$ is false; $c)$
	$E_{n+1}|H_{n+1}$ is void.\\
	$a)$ In this case
	\[
	Q_h=(q_{h1},1,q_{h1})=
	q_{h1}(1,1,1)+(1-q_{h1})(0,1,0)=q_{h1} Q_1+(1-q_{h1})Q_3.
	\]
	$b)$ In this case
	\[Q_h=(q_{h1},0,0)=
	q_{h1}(1,0,0)+(1-q_{h1})(0,0,0)=q_{h1} Q_2+(1-q_{h1})Q_4.
	\]
	$c)$ 
	In this case $Q_h=(q_{h1},x_{n+1},q_{h3})$ and we distinguish the following subcases: $(i)$ 
	$\bigwedge_{i=1}^n E_iH_i$ true, so that $Q_{h}=(1,x_{n+1},x_{n+1})$;
	$(ii)$ 
	$\bigvee_{i=1}^n \no{E}_iH_i$ true,
	so that $Q_{h}=(0,x_{n+1},0)$;
	$(iii)$ 
	$E_i|H_i$  void, for every $i \in S$ and  $E_i|H_i$ true for every $i \in \{1,2,\ldots,n\}\setminus S$,. 
	for some $\emptyset \neq S \subset \{1,2,\ldots,n\}$, so that $Q_{h}=(x_S,x_{n+1},x_{S\cup\{n+1\}})$.
	In  subcase $(i)$ it holds that
	\[
	Q_h=(1,x_{n+1},x_{n+1})=
	x_{n+1}(1,1,1)+(1-x_{n+1})(1,0,0)=
	x_{n+1} Q_1+(1-x_{n+1})Q_2.
	\]
	In subcase $(ii)$ it holds that
	\[
	Q_h=(0,x_{n+1},0)=
	x_{n+1}(0,1,0)+(1-x_{n+1})(0,0,0)=
	x_{n+1} Q_3+(1-x_{n+1})Q_4.
	\]
	In subcase $(iii)$, it can be verified by a finite iterative procedure that the point 
	$Q_h=(x_{S},x_{n+1},x_{S\cup\{n+1\}})\in \I$.
	We examine the different cases on the cardinality $s$ of $S$. We recall that $\bigwedge_{i\in S}(E_i|H_i)$ is denoted by $\C_S$.
	\\
	\emph{Step 1}. $s=1$. Without loss of generality we assume $S=\{1\}$,
	so that $Q_h=(x_S,x_{n+1},x_{S\cup\{n+1\}})=(x_1,x_{n+1},x_{\{1,n+1\}})$, where
	$x_1=P(E_1|H_1)$, $x_{\{1,n+1\}}=\prev[(E_1|H_1)\wedge(E_{n+1}|H_{n+1})]$. 
	By Theorem~\ref{THM:FRECHET} it holds that 
	$\max\{x_{S}+x_{n+1}-1,0\} \leq x_{S\cup\{n+1\}} \leq \min\{x_{S},x_{n+1}\}$, with 
	$(x_{S},x_{n+1}) \in [0,1]^2$. In other words,  $Q_h=(x_S,x_{n+1},x_{S\cup\{n+1\}})\in \I$.  The reasoning is the same for $S=\{i\}$, $i=2,\ldots,n$.\\
	\emph{Step 2}. $s=2$. Without loss of generality we assume $S=\{1,2\}$, so that $x_{S}=\prev[(E_1|H_1)\wedge(E_{2}|H_{2})]$,
	$x_{S\cup\{n+1\}}=\prev[\C_{S\cup\{n+1\}}]=\prev[(E_1|H_1)\wedge(E_2|H_2)\wedge(E_{n+1}|H_{n+1})]$. 
	We denote by $C_0^*,C_1^*, \ldots,C_{m^*}^*$, the constituents associated with $\{E_{i}|H_{i}, i \in S\cup\{n+1\}\}$, where $C_0^*=\bigwedge_{i\in S\cup\{n+1\}}\no{H}_i$. Moreover, with $C_h^*$, $h=0,1,	\ldots,m^*,$ we associate the point 
	$Q_h^*=(q_{h1}^*,q_{h2}^*,q_{h3}^*)$ which represents the value of the random vector  $\{\C_{S},E_{n+1}|H_{n+1},\C_{S\cup\{n+1\}}\}$ when $C_h^*$ is true. We observe that 
	$Q_0^*=(x_{S},x_{n+1},x_{S\cup\{n+1\}})$ and that $Q_1,Q_2,Q_3,Q_4$ still belongs to the set of points $\{Q_h^*, h=1,\ldots,m^*\}$.  In order that the assessment $(x_{S},x_{n+1},x_{S\cup\{n+1\}})$ on  $\{\C_{S},E_{n+1}|H_{n+1},\C_{S\cup\{n+1\}}\}$ be 
	coherent, the point $Q_0^*=(x_{S},x_{n+1},x_{S\cup\{n+1\}})$ must belong to the convex hull  of points $Q_1^*, Q_2^*, \ldots, Q_m^*$. 
	We show that for each  point $Q_h^*\neq Q_i$, $i=1,2,3,4$, it holds that $Q_h^*\in \I$.
	By repeating the previous reasoning we only need to analyze the subcase $(iii)$ of case $c)$.
	We have to show that, for every  nonempty subset $S' \subset S$, the point $Q_h^*=(x_{S'},x_{n+1},x_{S'\cup\{n+1\}})$ belongs to the 
	convex hull $\I$ of $Q_1,\ldots,Q_4$. As $S=\{1,2\}$, it holds that $S'=\{1\}$, or $S'=\{2\}$, so that $Q_h^*=(x_{S'},x_{n+1},x_{S'\cup\{n+1\}})=(x_1,x_{n+1},x_{\{1,n+1\}})$, or $Q_h^*=(x_2,x_{n+1},x_{\{2,n+1\}})$. By
	\emph{Step 1}, in both cases $Q_h^* \in \I$. Thus $Q_h=(x_{S},x_{n+1},x_{S\cup\{n+1\}}) \in \I$.
	In other words, $\max\{x_{S}+x_{n+1}-1,0\} \leq x_{S\cup\{n+1\}} \leq \min\{x_{S},x_{n+1}\}$, with 
	$(x_{S},x_{n+1}) \in [0,1]^2$.
	The reasoning is the same for every $S=\{i,j\} \subset \{1,2,\ldots,n\}$.
\[	...............................................................................................................  \]
	\emph{Step $k+1$}. $s=k+1$, $2<k+1<n$. 
	By induction, assume that $(x_{S'},x_{n+1},x_{S'\cup\{n+1\}}) \in \I$ for every $S'=\{i_1,i_2,\ldots,i_k\} \subset \{1,2,\ldots,n\}$. Then, by  the previous reasoning, it follows that $Q_h=(x_{S},x_{n+1},x_{S\cup\{n+1\}}) \in \I$ for every $S=\{i_1,i_2,\ldots,i_{k+1}\}$. 
	In other words, $\max\{x_{S}+x_{n+1}-1,0\} \leq x_{S\cup\{n+1\}} \leq \min\{x_{S},x_{n+1}\}$, with 
	$(x_{S},x_{n+1}) \in [0,1]^2$, for every $S=\{i_1,i_2,\ldots,i_{k+1}\}$.
	
	Thus, by this iterative procedure,  also in the subcase $(iii)$ of case $c)$ it holds that $Q_h\in \I$.
	Then,  $Q_h\in \I$, $h=5,\ldots,m$.
	Finally, the condition (\ref{EQ:SIGMACONJ}) is equivalent to $\M \in \I$, 
	so that 
	the assessment $\M$ is coherent if and only if 
	\[
	(\mu_{n},x_{n+1}) \in [0,1]^2,\;\; \max\{\mu_{n}+x_{n+1}-1,0\} \leq \mu_{n+1} \leq \min\{\mu_{n},x_{n+1}\}.
	\]
\qed	
\end{proof}
%\begin{theorem}\label{THM:PIFOR3}
%Assume that  the events $E_1, E_2, E_3, H_1, H_2, H_3$  are logically independent, with $H_1\neq \emptyset, H_2\neq \emptyset, H_3\neq \emptyset$.
%Then,	the set $\Pi$ of all coherent assessments  $\mathcal{M}=(x_1,x_2,x_3,x_{12},x_{13},x_{23},x_{123})$ on
%$\F=\{E_1|H_1,E_2|H_2,E_3|H_3, (E_1|H_1)\wedge (E_2|H_2),$ $(E_1|H_1)\wedge (E_3|H_3),
%(E_2|H_2)\wedge (E_3|H_3)$, ${(E_1|H_1)\wedge (E_2|H_2)\wedge (E_3|H_3)}\}$ is the set of points $(x_1,x_2,x_3,x_{12},x_{13},x_{23},x_{123})$ 
%which satisfy the following  conditions
%\begin{equation}
%\label{EQ:SYSTEMPISTATEMENT}
%\left\{
%\begin{array}{l}
%(x_1,x_2,x_3)\in[0,1]^3,\\
%\max\{x_1+x_2-1,x_{13}+x_{23}-x_3,0\}\leq x_{12}\leq \min\{x_1,x_2\},\\
%\max\{x_1+x_3-1,x_{12}+x_{23}-x_2,0\}\leq x_{13}\leq \min\{x_1,x_3\},\\
%\max\{x_2+x_3-1,x_{12}+x_{13}-x_1,0\}\leq x_{23}\leq \min\{x_2,x_3\},\\
%1-x_1-x_2-x_3+x_{12}+x_{13}+x_{23}\geq 0,\\
%%x_{123}'\leq x_{123}\leq x_{123}''.
%x_{123}\geq \max\{0,x_{12}+x_{13}-x_1,x_{12}+x_{23}-x_2,x_{13}+x_{23}-x_3\},\\
%x_{123}\leq  \min\{x_{12},x_{13},x_{23},1-x_1-x_2-x_3+x_{12}+x_{13}+x_{23}\}.
%\end{array}
%\right.
%\end{equation}
%\end{theorem}
\begin{proof}\emph{of Theorem \ref{THM:PIFOR3}.}\\
The computation of the set $\Pi$ is based on Section \ref{Coherence}.  The constituents $C_h$'s and  the  points $Q_h$'s associated with $(\F,\M)$
 are illustrated in Table \ref{TAB:TABLE}. 
 \begin{table}[!ht]
 	\caption{Constituents $C_h$'s and corresponding points $Q_h$'s associated with  $(\F,\M)$,  where    $\mathcal{M}=(x_1,x_2,x_3,x_{12},x_{13},x_{23},x_{123})$ is a prevision assessment on
 		$\F=\{E_1|H_1,E_2|H_2,E_3|H_3,
 		(E_1|H_1)\wedge (E_2|H_2), (E_1|H_1)\wedge (E_3|H_3),
 		(E_2|H_2)\wedge (E_3|H_3), (E_1|H_1)\wedge (E_2|H_2)\wedge (E_3|H_3)\}$.	}
 	\label{TAB:TABLE}
 	%	\vspace{-1cm}
 	\renewcommand*{\arraystretch}{.8}
 	\centering
 	\begin{tabular}{|L|L|LLLLLLL|L|}
 		\hline
 		& C_h                                   &     &     &     &    Q_h & & && \\
 		\hline
 		C_1    & E_1H_1       E_2H_2       E_3H_3      &   1 &   1 &   1 &      1 &      1 &      1 &       1 &	Q_1   \\
 		C_2    & E_1H_1       E_2H_2       \no{E}_3H_3 &   1 &   1 &   0 &      1 &      0 &      0 &       0 &	Q_2   \\
 		C_3    & E_1H_1       E_2H_2       \no{H}_3    &   1 &   1 & x_3 &      1 &    x_3 &    x_3 &     x_3 &	Q_3   \\
 		C_4    & E_1H_1       \no{E}_2H_2  E_3H_3      &   1 &   0 &   1 &      0 &      1 &      0 &       0 &	Q_4   \\
 		C_5    & E_1H_1       \no{E}_2H_2  \no{E}_3H_3 &   1 &   0 &   0 &      0 &      0 &      0 &       0 &	Q_5   \\
 		C_6    & E_1H_1       \no{E}_2H_2  \no{H}_3    &   1 &   0 & x_3 &      0 &    x_3 &      0 &       0 &	Q_6   \\
 		C_7    & E_1H_1       \no{H}_2     E_3H_3      &   1 & x_2 &   1 &    x_2 &      1 &    x_2 &     x_2 &	Q_7   \\
 		C_8    & E_1H_1       \no{H}_2     \no{E}_3H_3 &   1 & x_2 &   0 &    x_2 &      0 &      0 &       0 &	Q_8   \\
 		C_9    & E_1H_1       \no{H}_2     \no{H}_3    &   1 & x_2 & x_3 &    x_2 &    x_3 & x_{23} &  x_{23} &	Q_9   \\
 		C_{10} & \no{E}_1H_1  E_2H_2       E_3H_3      &   0 &   1 &   1 &      0 &      0 &      1 &       0 &	Q_{10}\\
 		C_{11} & \no{E}_1H_1  E_2H_2       \no{E}_3H_3 &   0 &   1 &   0 &      0 &      0 &      0 &       0 &	Q_{11}\\
 		C_{12} & \no{E}_1H_1  E_2H_2       \no{H}_3    &   0 &   1 & x_3 &      0 &      0 &    x_3 &       0 &	Q_{12}\\
 		C_{13} & \no{E}_1H_1  \no{E}_2H_2  E_3H_3      &   0 &   0 &   1 &      0 &      0 &      0 &       0 &	Q_{13}\\
 		C_{14} & \no{E}_1H_1  \no{E}_2H_2  \no{E}_3H_3 &   0 &   0 &   0 &      0 &      0 &      0 &       0 &	Q_{14}\\
 		C_{15} & \no{E}_1H_1  \no{E}_2H_2  \no{H}_3    &   0 &   0 & x_3 &      0 &      0 &      0 &       0 &	Q_{15}\\
 		C_{16} & \no{E}_1H_1  \no{H}_2     E_3H_3      &   0 & x_2 &   1 &      0 &      0 &    x_2 &       0 &	Q_{16}\\
 		C_{17} & \no{E}_1H_1  \no{H}_2     \no{E}_3H_3 &   0 & x_2 &   0 &      0 &      0 &      0 &       0 &	Q_{17}\\
 		C_{18} & \no{E}_1H_1  \no{H}_2     \no{H}_3    &   0 & x_2 & x_3 &      0 &      0 & x_{23} &       0 &	Q_{18}\\
 		C_{19} & \no{H}_1     E_2H_2       E_3H_3      & x_1 &   1 &   1 &    x_1 &    x_1 &      1 &     x_1 &	Q_{19}\\
 		C_{20} & \no{H}_1     E_2H_2       \no{E}_3H_3 & x_1 &   1 &   0 &    x_1 &      0 &      0 &       0 &	Q_{20}\\
 		C_{21} & \no{H}_1     E_2H_2       \no{H}_3    & x_1 &   1 & x_3 &    x_1 & x_{13} &    x_3 &  x_{13} &	Q_{21}\\
 		C_{22} & \no{H}_1     \no{E}_2H_2  E_3H_3      & x_1 &   0 &   1 &      0 &    x_1 &      0 &       0 &	Q_{22}\\
 		C_{23} & \no{H}_1     \no{E}_2H_2  \no{E}_3H_3 & x_1 &   0 &   0 &      0 &      0 &      0 &       0 &	Q_{23}\\
 		C_{24} & \no{H}_1     \no{E}_2H_2  \no{H}_3    & x_1 &   0 & x_3 &      0 & x_{13} &      0 &       0 &	Q_{24}\\
 		C_{25} & \no{H}_1     \no{H}_2     E_3H_3      & x_1 & x_2 &   1 & x_{12} &    x_1 &    x_2 &  x_{12} &	Q_{25}\\
 		C_{26} & \no{H}_1     \no{H}_2     \no{E}_3H_3 & x_1 & x_2 &   0 & x_{12} &      0 &      0 &       0 &	Q_{26}\\
 		C_0    & \no{H}_1     \no{H}_2     \no{H}_3    & x_1 & x_2 & x_3 & x_{12} & x_{13} & x_{23} & x_{123} &	Q_0   \\
 		\hline
 	\end{tabular}
 \end{table}
 We recall that $Q_h=(q_{h1},\ldots,q_{h7})$ represents the value associated with $C_h$ of the random vector
$ (E_1|H_1,E_2|H_2,E_3|H_3, (E_1|H_1)\wedge (E_2|H_2),$ $(E_1|H_1)\wedge (E_3|H_3),
 (E_2|H_2)\wedge (E_3|H_3)$, ${(E_1|H_1)\wedge (E_2|H_2)\wedge (E_3|H_3)})$, $h=1,\ldots,26$. With $C_0=\no{H_1}\no{H_2}\no{H_3}$ it is associated $Q_0=\M$. 
 Denoting by $\mathcal{I}$ the convex hull generated by  $Q_1,Q_2, \ldots,Q_{26}$, the coherence of the prevision assessment $\mathcal{M}$ on $\F$ requires that the condition $\M\in \mathcal{I}$ be satisfied; this amounts to the solvability of the following system
\[\begin{array}{l}
(\Sigma) \hspace{1 cm}
\M=\sum_{h=1}^{26} \lambda_hQ_h,\;\;\;
\sum_{h=1}^{26} \lambda_h=1,\;\;\; \lambda_h\geq 0,\,  \; h=1,\ldots,26 \,.
\end{array}
\]
We observe that
\[
\begin{array}{lll}
Q_3=x_3Q_1+(1-x_3)Q_2,\;\;
Q_6=x_3Q_4+ (1-x_3)Q_5,\;\;\\
Q_7=x_2Q_1+ (1-x_2)Q_4,\;\;
Q_8=x_2Q_2+(1-x_2)Q_5,\;\;  \\
Q_9=x_{23}Q_1+(x_2-x_{23})Q_2+(x_3-x_{23})Q_{4}+(x_{23}-x_2-x_3+1)Q_{5},\\
Q_{12}=x_3Q_{10}+(1-x_3)Q_{11},\;\;
Q_{15}=x_3Q_{13}+(1-x_3)Q_{14},\;\; \\
Q_{16}=x_2Q_{10}+(1-x_2)Q_{13}, \;\;
Q_{17}=x_2Q_{11}+(1-x_2)Q_{14},\;\; \\
Q_{18}=x_{23}Q_{10}+(x_2-x_{23})Q_{11}+(x_3-x_{23})Q_{13}+(x_{23}-x_2-x_3+1)Q_{14},\\
Q_{19}=x_1Q_1+(1-x_1)Q_{10},\;\;
Q_{20}=x_1Q_2+(1-x_1)Q_{11},\\
%\end{array}
%\end{small}
%\]
%\[
%\begin{small}
%\begin{array}{lll}
Q_{21}=x_{13} Q_1+(x_1-x_{13})Q_2  +(x_3-x_{13})Q_{10} +(x_{13}-x_1-x_3+1) Q_{11},\;\;\\
Q_{22}=x_1Q_4+(1-x_1)Q_{13},\;\;
Q_{23}=x_1Q_5+(1-x_1)Q_{14},\;\; \\
Q_{24}=x_{13} Q_4+(x_1-x_{13})Q_5  +(x_3-x_{13})Q_{13} +(x_{13}-x_1-x_3+1) Q_{14},\\
Q_{25}=x_{12}Q_1+(x_1-x_{12})Q_4+(x_2-x_{12})Q_{10}+(x_{12}-x_1-x_2+1)Q_{13},\\
Q_{26}=x_{12}Q_2+(x_1-x_{12})Q_5+(x_2-x_{12})Q_{11}+(x_{12}-x_1-x_2+1)Q_{14}.\\
\end{array}
\]
Thus, $\I$ coincides with the convex hull of the points
$Q_1, Q_2, Q_4, Q_5, Q_{10}, Q_{11}, Q_{13}, Q_{14}$.
For the sake of simplicity, we set:
$Q_1'=Q_1, Q_2'=Q_2, Q_3'=Q_4, Q_4'=Q_5,$ \linebreak $Q_5'=Q_{10}, Q_6'=Q_{11}, Q_7'=Q_{13}, Q_8'=Q_{14}$.
Then,
the condition  $\M\in \I$ amounts to the solvability of the following system
\[\begin{array}{l}
(\Sigma') \hspace{1 cm}
\M=\sum_{h=1}^8 \lambda_h'Q_h',\;\;\;
\sum_{h=1}^8 \lambda_h'=1,\;\;\; \lambda_h'\geq 0,\,  h=1,\ldots, 8  \,
\end{array}
\]
that is
\[
(\Sigma')\left\{
\begin{array}{l}
\lambda_1'+\lambda_2'+\lambda_3'+\lambda_4'=x_1,\;\;
\lambda_1'+\lambda_2'+\lambda_5'+\lambda_6'=x_2,\;\;
\lambda_1'+\lambda_3'+\lambda_5'+\lambda_7'=x_3,\\
\lambda_1'+\lambda_2'=x_{12},\;\;
\lambda_1'+\lambda_3'=x_{13},\;\;
\lambda_1'+\lambda_5'=x_{23},\;\;
\lambda_1'=x_{123},\;\;\\
\sum_{h=1}^8\lambda_h'=1,\;\;
\lambda_h' \geq 0,\;\; h=1,2,\ldots,8.\\
\end{array}
\right.
\]
%with
%\[
%\begin{array}{ll}
%\lambda_1'=\lambda_1+x_3\lambda_3+x_2\lambda_7+x_{23}\lambda_9+x_1\lambda_{19}+x_{13}\lambda_{21}+x_{12}\lambda_{25}\\
%\lambda_2'=\lambda_2+(1-x_3)\lambda_3+x_2\lambda_8+(x_2-x_{23})\lambda_9+x_1\lambda_{20}+(x_1-x_{13})\lambda_{21}+x_{12}\lambda_{26}\\
%\lambda_3'=\lambda_4+x_3\lambda_6+(1-x_2)\lambda_7+(x_3-x_{23})\lambda_9+x_1\lambda_{22}+x_{13}\lambda_{24}+(x_1-x_{12})\lambda_{25}\\
%\lambda_4'=\lambda_5+(1-x_3)\lambda_6+(1-x_2)\lambda_8+(x_{23}-x_2-x_3+1)\lambda_9+x_1\lambda_{23}+(x_1-x_{13})\lambda_{24}+(x_1-x_{12})\lambda_{26}\\
%\lambda_{5}'=\lambda_{10}+x_3\lambda_{12}+x_2\lambda_{16}+x_{23}\lambda_{18}+(1-x_1)\lambda_{19}+(x_3-x_{13})\lambda_{21}
%+(x_2-x_{12})\lambda_{25}\\
%\lambda_{6}'=\lambda_{11}+(1-x_3)\lambda_{12}+x_2\lambda_{17}+(x_2-x_{23})\lambda_{18}+(1-x_1)\lambda_{20}+(x_{13}-x_1-x_3+1)\lambda_{21}+(x_2-x_{12})\lambda_{26}\\
%\lambda_{7}'=\lambda_{13}+x_3\lambda_{15}+(1-x_2)\lambda_{16}+(x_3-x_{23})\lambda_{18}+(1-x_1)\lambda_{22}+(x_{3}-x_{13})\lambda_{24}+(x_{12}-x_1-x_2+1)\lambda_{25}\\
%\lambda_{8}'=\lambda_{14}+(1-x_3)\lambda_{15}+(1-x_2)\lambda_{17}+(x_{23}-x_2-x_3+1)\lambda_{18}+(1-x_1)\lambda_{23}+(x_{13}-x_1-x_3+1)\lambda_{24}
%+(x_{12}-x_1-x_2+1)\lambda_{26}\\
%\end{array}
%\]
System $(\Sigma')$ can be written as
\[
(\Sigma')\left\{
\begin{array}{ll}
\lambda'_1=x_{123},\;\;
\lambda'_2=x_{12}-x_{123},\;\;
\lambda'_3=x_{13}-x_{123},\;\;
\lambda'_4=x_1-x_{12}-x_{13}+x_{123},\;\; \\
\lambda'_5=x_{23}-x_{123},\;\;
\lambda'_6=x_2-x_{12}-x_{23}+x_{123},\;\;
\lambda'_7=x_3-x_{13}-x_{23}+x_{123},\;\;\\
\lambda'_8=1-x_1-x_2-x_3+x_{12}+x_{13}+x_{23}-x_{123},\;\;
\lambda'_h \geq 0,\;\; h=1,2,\ldots,8.\\
\end{array}
\right.
\]
As it can be verified, by non-negativity of $\lambda'_1,\ldots,\lambda'_8$ it follows that $(\Sigma')$ is solvable (with a unique solution) if and only if
\begin{equation}\label{EQ:SYSTEMPIcorta}
\left\{
\begin{array}{l}
x_{123}\geq \max\{0,x_{12}+x_{13}-x_1,x_{12}+x_{23}-x_2,x_{13}+x_{23}-x_3\},\\
x_{123}\leq \min\{x_{12},x_{13},x_{23},1-x_1-x_2-x_3+x_{12}+x_{13}+x_{23}\},
\end{array}
\right.
\end{equation}
or, in a more explicit way, if and only if the following conditions are satisfied
\begin{equation}\label{EQ:SYSTEMPI}
\left\{
\begin{array}{l}
(x_1,x_2,x_3)\in[0,1]^3,\\
\max\{x_1+x_2-1,x_{13}+x_{23}-x_3,0\}\leq x_{12}\leq \min\{x_1,x_2\},\\
\max\{x_1+x_3-1,x_{12}+x_{23}-x_2,0\}\leq x_{13}\leq \min\{x_1,x_3\},\\
\max\{x_2+x_3-1,x_{12}+x_{13}-x_1,0\}\leq x_{23}\leq \min\{x_2,x_3\},\\
1-x_1-x_2-x_3+x_{12}+x_{13}+x_{23}\geq 0,\\
%x_{123}'\leq x_{123}\leq x_{123}''.
x_{123}\geq \max\{0,x_{12}+x_{13}-x_1,x_{12}+x_{23}-x_2,x_{13}+x_{23}-x_3\},\\
x_{123}\leq  \min\{x_{12},x_{13},x_{23},1-x_1-x_2-x_3+x_{12}+x_{13}+x_{23}\}.
\end{array}
\right.
\end{equation}
Notice that the conditions in (\ref{EQ:SYSTEMPI}) coincide with that ones in  (\ref{EQ:SYSTEMPISTATEMENT}). 
Moreover, assuming $(\Sigma')$ solvable, with the solution
$(\lambda_1',\ldots,\lambda_8')$, we associate
the vector $(\lambda_1,\lambda_2,\ldots,\lambda_{26})$, with
$\lambda_1=\lambda_1',\;
\lambda_2=\lambda_2', \;
\lambda_4=\lambda_3', \;
\lambda_5=\lambda_4', \;
\lambda_{10}=\lambda_5', \;
\lambda_{11}=\lambda_6', \;
\lambda_{13}=\lambda_7', \;
\lambda_{14}=\lambda_8', \;
\lambda_h=0, h\notin\{1,2,4,5,10,11,13,14\},
$
which is a solution of $(\Sigma)$. Moreover, defining $\mathcal{J}=\{ 1,2,4,5,10,11,13,14\}$, it holds that
$\bigvee_{h\in\mathcal{J}}C_h= H_1\wedge H_2\wedge H_3$.
Therefore, $\sum_{h\in\mathcal{J}}\lambda_h =\sum_{h: C_h \subseteq H_1H_2H_3} \lambda_h  = 1$ and hence
$\sum_{h: C_h \subseteq H_i}\lambda_h=1$, $i=1,2,3$,
$\sum_{h: C_h \subseteq H_i\vee H_j}\lambda_h=1$, $i\neq j$, $\sum_{h: C_h \subseteq H_1\vee H_2\vee H_3}\lambda_h=~1$; thus, by (\ref{EQ:I0}),  $I_0 = \emptyset$.
Then, by Theorem \ref{CNES-PREV-I_0-INT}, the solvability of $(\Sigma)$ is also sufficient for the coherence of $\M$.  Finally,  $\Pi$ is the set of conditional prevision assessments $(x_1,x_2,x_3,x_{12},x_{13},x_{23},x_{123})$ which satisfy the conditions in  (\ref{EQ:SYSTEMPISTATEMENT}).
\qed
\end{proof}

%\begin{theorem}\label{THM:PIFOR3bis}
%Let be given any logically independent events $E_1, E_2, E_3,H$, with $H\neq \emptyset$.
%Then,the set $\Pi$ of all coherent assessments  $\mathcal{M}=(x_1,x_2,x_3,x_{12},x_{13},x_{23},x_{123})$ on
%$\F=\{E_1|H,E_2|H,E_3|H, (E_1|H)\wedge (E_2|H),$ $(E_1|H)\wedge (E_3|H),
%(E_2|H)\wedge (E_3|H)$, ${(E_1|H)\wedge (E_2|H)\wedge (E_3|H)}\}$ is the set of points $(x_1,x_2,x_3,x_{12},x_{13},x_{23},x_{123})$ 
%which satisfy the  conditions in formula (\ref{EQ:SYSTEMPISTATEMENT}).
%\end{theorem}
\begin{proof}\emph{of Theorem \ref{THM:PIFOR3bis}.}\\
Notice that, $(E_i|H)\wedge (E_j|H)=(E_iE_j)|H$, for every $\{i,j\}\subset\{1,2,3\}$, and  $(E_1|H)\wedge (E_2|H)\wedge (E_3|H)=(E_1E_2E_3)|H$. 
Then $\F=\{E_1|H,E_2|H,E_3|H$, $(E_1E_2)|H,$ $(E_1E_3)|H,
(E_2E_3)|H$, $(E_1E_2E_3)|H\}$.
The computation of the set $\Pi$ is based on Section \ref{Coherence}.  The constituents $C_h$'s and  the  points $Q_h$'s associated with $(\F,\M)$
are illustrated in Table \ref{TAB:TABLEbis}. 
\begin{table}[!ht]
		\caption{Constituents $C_h$'s and corresponding points $Q_h$'s associated with  $(\F,\M)$,  where    $\mathcal{M}=(x_1,x_2,x_3,x_{12},x_{13},x_{23},x_{123})$ is a prevision assessment on
	$\F=\{E_1|H,E_2|H,E_3|H$, $(E_1E_2)|H,$ $(E_1E_3)|H,
	(E_2E_3)|H$, $(E_1E_2E_3)|H\}$.	}	
		\renewcommand*{\arraystretch}{.8}
		\centering
		\begin{tabular}{|L|L|LLLLLLL|L|}
			\hline
			& C_h                                   &     &     &     &    Q_h & & && \\
			\hline
C_1    & E_1    E_2     E_3H      &   1 &   1 &   1 &      1 &      1 &      1 &       1 &	Q_1   \\
C_2    & E_1    E_2     \no{E}_3H &   1 &   1 &   0 &      1 &      0 &      0 &       0 &	Q_2   \\
C_3    & E_1    \no{E}_2  E_3H     &   1 &   0 &   1 &      0 &      1 &      0 &       0 &	Q_3   \\
C_4    & E_1    \no{E}_2  \no{E}_3H &   1 &   0 &   0 &      0 &      0 &      0 &       0 &	Q_4   \\
C_{5} & \no{E}_1 E_2     E_3H      &   0 &   1 &   1 &      0 &      0 &      1 &       0 &	Q_{5}\\
C_{6} & \no{E}_1 E_2     \no{E}_3H &   0 &   1 &   0 &      0 &      0 &      0 &       0 &	Q_{6}\\
C_{7} & \no{E}_1 \no{E}_2  E_3H     &   0 &   0 &   1 &      0 &      0 &      0 &       0 &	Q_{7}\\
C_{8} & \no{E}_1 \no{E}_2  \no{E}_3H &   0 &   0 &   0 &      0 &      0 &      0 &       0 &	Q_{8}\\
C_0    & \no{H}    & x_1 & x_2 & x_3 & x_{12} & x_{13} & x_{23} & x_{123} &	Q_0   \\
			\hline
		\end{tabular}
		\label{TAB:TABLEbis}
		%	\vspace{-1cm}
\end{table}
We recall that $Q_h=(q_{h1},\ldots,q_{h7})$ represents the value associated with $C_h$ of the random vector
$ (E_1|H,E_2|H,E_3|H, (E_1E_2)|H,$ $(E_1E_3)|H,
(E_2E_3)|H$, $(E_1E_2E_3)|H)$, $h=1,\ldots,8$. With $C_0=\no{H}$ it is associated $Q_0=\M$. 
Denoting by $\mathcal{I}$ the convex hull generated by  $Q_1,Q_2, \ldots,Q_{8}$, 
as all the conditioning events coincide with $H$ the  assessment $\mathcal{M}$ on $\F$ is coherent if and only if  $\M\in \mathcal{I}$; that is,  if and only if the following system is solvable
\begin{equation}
\label{EQ:SIGMA}
\begin{array}{l}
\hspace{1 cm}
\M=\sum_{h=1}^{8} \lambda_hQ_h,\;\;\;
\sum_{h=1}^{8} \lambda_h=1,\;\;\; \lambda_h\geq 0,\,  \; h=1,\ldots,8 .
\end{array}
\end{equation}
The points $Q_1,Q_2, \ldots,Q_{8}$ coincide with the points 
$Q_1',Q_2', \ldots,Q_{8}'$ in the proof of Theorem \ref{THM:PIFOR3}, respectively. Then,  system  (\ref{EQ:SIGMA})  coincides with 
system $(\Sigma)'$ in  the proof of Theorem \ref{THM:PIFOR3}. 
Therefore, it is solvable if and only if the conditions in 
(\ref{EQ:SYSTEMPISTATEMENT}) are satisfied.
In other words,  the set $\Pi$ of all coherent assessments $\M$ on $\F$ coincides with 
the set of points  $(x_1,x_2,x_3,x_{12},x_{13},x_{23},x_{123})$ which satisfy the conditions in  (\ref{EQ:SYSTEMPISTATEMENT}).
\qed
\end{proof}
%\begin{theorem}\label{THM:PENT}
%Let be given a p-consistent family of $n$ conditional events $\F=\{E_1|H_1,\ldots, E_n|H_n\}$ and a further conditional event $E_{n+1}|H_{n+1}$. Then, the following assertions are equivalent:\\
%(i) $\F$ p-entails $E_{n+1}|H_{n+1}$;\\
%(ii) the conjunction $\C_{n+1}=(E_1|H_1)\wedge\cdots \wedge(E_n|H_n)\wedge (E_{n+1}|H_{n+1})$ coincides with
%the  conjunction $\C_{n}=(E_1|H_1)\wedge\cdots \wedge(E_n|H_n)$;\\
%(iii) the inequality  $\C_{n}\leq (E_{n+1}|H_{n+1})$ is satisfied.
%\end{theorem}
\begin{proof}\emph{of Theorem \ref{THM:PENT}.}\\
In order to prove the theorem it is enough to prove the following implications: a) $(i) \Rightarrow (ii)$; b) $(ii) \Rightarrow (iii)$; c) $(iii) \Rightarrow (i)$.\\
a) $(i) \Rightarrow (ii)$. We recall that $\F$  p-entails $E_{n+1}|H_{n+1}$ if and only if
	either $H_{n+1} \subseteq E_{n+1}$, or there exists a nonempty $\F_{\Gamma} \subseteq \mathcal{F}$, where $\Gamma \subseteq \{1,\ldots,n\}$,  such that $QC(\F_{\Gamma})$ implies $E_{n+1}|H_{n+1}$ (see, e.g. \cite[Theorem 6]{GiSa13IJAR}).
Let us first consider the case where $H_{n+1} \subseteq E_{n+1}$. In this case  $P(E_{n+1}|H_{n+1}) = 1$ and $E_{n+1}|H_{n+1} =H_{n+1}+\no{H}_{n+1}=1$.  We have $\C_{n+1}=\C_n \wedge (E_{n+1}|H_{n+1})$, with $E_{n+1}|H_{n+1} =1$.
 We distinguish two cases: $(\alpha)$ $H_{n+1}$ is true; $(\beta)$ $H_{n+1}$ is false.
In case $(\alpha)$, by Definition \ref{DEF:CONJUNCTIONn} and Remark \ref{REM:CONJUNCTIONn}, as $E_{n+1}|H_{n+1}$ is true it follows that the values of $\C_{n+1}$ and of $\C_n$ coincide.
In case $(\beta)$, let  $C_0,\ldots,C_m$ be the constituents associated with $\F$, where $C_0=\no{H}_1\cdots \no{H}_n$. Then, the constituents $C_0',\ldots,C_m'$ associated with $\F\cup\{E_{n+1}|H_{n+1}\}$ and contained in $\no{H}_{n+1}$ are $C_0'=C_0\no{H}_{n+1},\ldots,C_m'=C_m\no{H}_{n+1}$. 
For each constituent $C_h'$, $h=1,\ldots,m$,  by formula (\ref{EQ:CF}) the corresponding value of $\C_n$ is $z_{h}\in\{1,0,x_{S_{h}'''}\}$. We 
denote by $z_h'$ the value of  $\C_{n+1}$ associated with $z_h$ and we recall that
$C_h'\subseteq \no{H}_{n+1}$, $h=0,1,\ldots,m$.   For each index $h$,
 if $z_h=1$, then $z'_h=1$; if $z_h=0$, then $z'_h=0$; if $z_h=x_{S_{h}'''}$, then $z_h'=x_{S_{h}'''\cup\{n+1\}}$.
We set $P(E_{n+1}|H_{n+1})=x_{n+1}$; in our case $x_{n+1}=1$. Moreover, by Theorem \ref{THM:FRECHETCn} 
\[
\max\{x_{S_{h}'''}+x_{n+1}-1,0\}\leq  x_{S_{h}'''\cup\{n+1\}}\leq \min\{x_{S_{h}'''},x_{n+1}\};
\]
therefore $ x_{S_{h}'''\cup\{n+1\}}=x_{S_{h}'''}$. Then, the values of  $\C_{n+1}$ and of $\C_n$ coincide for every  $C'_h$.
Thus, $\C_{n+1}=\C_n$ when $H_{n+1}\subseteq E_{n+1}$.
\\
We consider now the case where  there exists $\F_{\Gamma}\subseteq \mathcal{F}$, $\F_{\Gamma}\neq \emptyset$, such that  $QC(\F_{\Gamma})\subseteq E_{n+1}|H_{n+1}$. First of all we prove that $\C(\F_{\Gamma}\cup\{E_{n+1}|H_{n+1}\} )=\C(\F_{\Gamma})$. For the sake of simplicity, we set  $\C(\F_{\Gamma})=\C_{\Gamma}$ and  $\C(\F_{\Gamma}\cup\{E_{n+1}|H_{n+1}\} )=\C_{\Gamma\cup \{n+1\}}$.

If  the value of $\C_{\Gamma}$ is 1 (because all the conditional events in $\F_{\Gamma}$ are true), 
then $QC(\F_{\Gamma})$ is true and hence $E_{n+1}|H_{n+1}$ is also true; thus $\C_{\Gamma\cup \{n+1\}}=1$, so that $\C_{\Gamma\cup \{n+1\}}=\C_{\Gamma}$.\\
If  the value of $\C_{\Gamma}$ is 0 (because some  conditional event in $\F_{\Gamma}$ is false),  then $\C_{\Gamma\cup \{n+1\}}$ is 0 too, so that $\C_{\Gamma\cup \{n+1\}}=\C_{\Gamma}$.\\
If $\C_{\Gamma}$ is $x_{S}$   for some nonempty subset $\S\subset \Gamma$  (that is, all the conditional events in $\F_{S}$ are void and the other ones in $\F_{\Gamma \setminus S}$ are true), then $QC(\F_{\Gamma})$ is  true and
and hence $E_{n+1}|H_{n+1}$ is also true; thus $\C_{\Gamma\cup \{n+1\}}=x_{S}$, so that $\C_{\Gamma\cup \{n+1\}}=\C_{\Gamma}$.\\
If $\C_{\Gamma}$ is $x_{\Gamma}$   because  all the conditional events in $\F_{\Gamma}$ are void, then $QC(\F_{\Gamma})$ is  void 
and for $E_{n+1}|H_{n+1}$ there are two cases: 1)  $E_{n+1}|H_{n+1}$  true;  2)  $E_{n+1}|H_{n+1}$  void.
In case 1), by also recalling Remark~\ref{REM:CONJUNCTIONn}, it holds that $\C_{\Gamma\cup \{n+1\}}=x_{\Gamma}$ so that $\C_{\Gamma\cup \{n+1\}}=\C_{\Gamma}$.\\
In case 2) it holds that $\C_{\Gamma\cup \{n+1\}}  =x_{\Gamma \cup\{n+1\}}$, where $x_{\Gamma\cup\{n+1\}}=\prev(\C_{\Gamma\cup \{n+1\}})$. 
Now, we observe that the random quantities $\C_{\Gamma}$ and $\C_{\Gamma\cup \{n+1\}}$ 
 coincide conditionally on $\bigvee_{i\in \Gamma\cup\{n+1\}} H_{i}$ being true; then by Theorem \ref{THM:EQ-CRQ} it holds that 
   $\prev(\C_{\Gamma})=\prev(\C_{\Gamma\cup \{n+1\}})$, that is 
$x_{\Gamma}=x_{\Gamma\cup\{n+1\}}$; thus $\C_{\Gamma\cup \{n+1\}}=\C_{\Gamma}$.\\
Finally, denoting by $\Gamma^c$ the set $\{1,\ldots,n\}\setminus \Gamma$,
 by  the associative property of conjunction we obtain
\[
\C_{n+1}=\C_{n}\wedge E_{n+1}|H_{n+1}=\C_{\Gamma^c}\wedge \C_{\Gamma}   \wedge E_{n+1}|H_{n+1}=\C_{\Gamma^c}\wedge \C_{\Gamma}=\C_n.
\]
b) $(ii) \Rightarrow (iii)$. By monotonicity property of conjunction it holds that $\C_{n+1}\leq E_{n+1}|H_{n+1}$. Then,
by assuming  $\C_{n}=\C_{n+1}$, it follows $\C_{n}\leq E_{n+1}|H_{n+1}$.
\\
c) $(iii) \Rightarrow (i)$. 
Let us  assume that $\C_{n}\leq E_{n+1}|H_{n+1}$, so that $\prev(\C_{n})\leq P(E_{n+1}|H_{n+1})$.
Moreover, by assuming that $P(E_i|H_i)=1$, $i=1,\ldots, n$, from (\ref{EQ:LUKMIN}) 
it follows $\prev(\C_{n})=1$ and hence $P(E_{n+1}|H_{n+1})=1$, that is $\F$ p-entails $E_{n+1}|H_{n+1}$.
\qed
\end{proof}
%\bibliographystyle{basic}
% BibTeX users please use one of
%\bibliographystyle{spbasic}      % basic style, author-year citations
%\bibliographystyle{spmpsci}      % mathematics and physical sciences
%\bibliographystyle{spphys}       % APS-like style for physics
%\bibliography{}   % name your BibTeX data base
%\bibliographystyle{spbasic}    
%\bibliographystyle{plain}
%\bibliography{dipl}

\begin{thebibliography}{10}
		\bibitem{adams65}
	E.~W. Adams.
	\newblock The logic of conditionals.
	\newblock {\em Inquiry}, 8:166--197, 1965.
	
	\bibitem{adams75}
	E.~W. Adams.
	\newblock {\em The logic of conditionals}.
	\newblock Reidel, Dordrecht, 1975.
	
	\bibitem{benferhat97}
	S.~Benferhat, D.~Dubois, and H.~Prade.
	\newblock Nonmonotonic reasoning, conditional objects and possibility theory.
	\newblock {\em Artificial Intelligence}, 92:259--276, 1997.
	
	\bibitem{biazzo00}
	V.~Biazzo and A.~Gilio.
	\newblock A generalization of the fundamental theorem of de {F}inetti for
	imprecise conditional probability assessments.
	\newblock {\em International Journal of Approximate Reasoning},
	24(2-3):251--272, 2000.
	
	\bibitem{biazzo05}
	V.~Biazzo, A.~Gilio, T.~Lukasiewicz, and G.~Sanfilippo.
	\newblock Probabilistic logic under coherence: {C}omplexity and algorithms.
	\newblock {\em Annals of Mathematics and Artificial Intelligence},
	45(1-2):35--81, 2005.
	
	\bibitem{BiGS08}
	V.~Biazzo, A.~Gilio, and G.~Sanfilippo.
	\newblock Generalized coherence and connection property of imprecise
	conditional previsions.
	\newblock In {\em Proc. IPMU 2008, Malaga, Spain, June 22 - 27}, pages
	907--914, 2008.
	
	\bibitem{BiGS12}
	V.~Biazzo, A.~Gilio, and G.~Sanfilippo.
	\newblock Coherent conditional previsions and proper scoring rules.
	\newblock In {\em Advances in Computational Intelligence. IPMU 2012}, volume
	300 of {\em CCIS}, pages 146--156. Springer Heidelberg, 2012.
	
	\bibitem{boole_1857}
	G.~Boole.
	\newblock {On the Application of the Theory of Probabilities to the Question of
		the Combination of Testimonies or Judgments}.
	\newblock {\em Transactions of the Royal Society of Edinburgh}, 21(4):597--653,
	1857.
	
	\bibitem{Cala87}
	P.~Calabrese.
	\newblock An algebraic synthesis of the foundations of logic and probability.
	\newblock {\em Information Sciences}, 42(3):187 -- 237, 1987.
	
	\bibitem{CaLS07}
	A.~Capotorti, F.~Lad, and G.~Sanfilippo.
	\newblock Reassessing accuracy rates of median decisions.
	\newblock {\em American Statistician}, 61(2):132--138, 2007.
	
	\bibitem{CiDu12}
	D.~Ciucci and D.~Dubois.
	\newblock {Relationships between Connectives in Three-Valued Logics}.
	\newblock In {\em Advances on Computational Intelligence}, volume 297 of {\em
		CCIS}, pages 633--642. Springer, 2012.
	
	\bibitem{CoSc99}
	G.~Coletti and R.~Scozzafava.
	\newblock Conditioning and inference in intelligent systems.
	\newblock {\em Soft Computing}, 3(3):118--130, 1999.
	
	\bibitem{coletti02}
	G.~Coletti and R.~Scozzafava.
	\newblock {\em Probabilistic logic in a coherent setting}.
	\newblock Kluwer, Dordrecht, 2002.
	
	\bibitem{CoSV13}
	G.~Coletti, R.~Scozzafava, and B.~Vantaggi.
	\newblock Coherent conditional probability, fuzzy inclusion and default rules.
	\newblock In R.R. Yager, A.~M. Abbasov, M.~Z. Reformat, and S.~N. Shahbazova,
	editors, {\em Soft Computing: State of the Art Theory and Novel
		Applications}, pages 193--208. Springer,
	Heidelberg, 2013.
	
	\bibitem{CoSV15}
	G.~Coletti, R.~Scozzafava, and B.~Vantaggi.
	\newblock Possibilistic and probabilistic logic under coherence: Default
	reasoning and {S}ystem {P}.
	\newblock {\em Mathematica Slovaca}, 65(4):863--890, 2015.
	
	\bibitem{defi36}
	B.~de~Finetti.
	\newblock La logique de la probabilit\'{e}.
	\newblock In {\em Actes du Congr\`{e}s International de Philosophie
		Scientifique, Paris, 1935}, pages IV 1--IV 9, 1936.
	
	\bibitem{edgington95}
	D.~Edgington.
	\newblock On conditionals.
	\newblock {\em Mind}, 104:235--329, 1995.
	
	\bibitem{gilio02}
	A.~Gilio.
	\newblock Probabilistic reasoning under coherence in {S}ystem {P}.
	\newblock {\em Annals of Mathematics and Artificial Intelligence}, 34:5--34,
	2002.
	
	\bibitem{gilio12ijar}
	A.~Gilio.
	\newblock Generalizing inference rules in a coherence-based probabilistic
	default reasoning.
	\newblock {\em International Journal of Approximate Reasoning}, 53(3):413--434,
	2012.
	
	\bibitem{GOPS16}
	A.~Gilio, D.~Over, N.~Pfeifer, and G.~Sanfilippo.
	\newblock Centering and compound conditionals under coherence.
	\newblock In {\em Soft Methods for Data Science}, volume 456 of {\em AISC},
	pages 253--260. Springer, 2017.
	
	\bibitem{gilio16}
	A.~Gilio, N.~Pfeifer, and G.~Sanfilippo.
	\newblock Transitivity in coherence-based probability logic.
	\newblock {\em Journal of Applied Logic}, 14:46--64, 2016.
	
	\bibitem{GiPS18wp}
	A.~Gilio, N.~Pfeifer, and G.~Sanfilippo.
	\newblock Probabilistic entailment and iterated conditionals, submitted.
	\newblock Available on arxiv: https://arxiv.org/abs/1804.06187v1.
	
	\bibitem{gilio10}
	A.~Gilio and G.~Sanfilippo.
	\newblock Quasi {C}onjunction and p-entailment in nonmonotonic reasoning.
	\newblock In C.~Borgelt, G.~Gonz{\'a}lez-Rodr{\'i}guez, W.~Trutschnig, M.~A.
	Lubiano, M.~{\'A}. Gil, P.~Grzegorzewski, and O.~Hryniewicz, editors, {\em
		Combining Soft Computing and Statistical Methods in Data Analysis}, volume~77
	of {\em {A}dvances in {I}ntelligent and {S}oft {C}omputing}, pages 321--328.
	Springer-{V}erlag, 2010.
	
	\bibitem{GiSa11a}
	A.~Gilio and G.~Sanfilippo.
	\newblock Coherent conditional probabilities and proper scoring rules.
	\newblock In {\em Proc. of ISIPTA'11}, pages 189--198, Innsbruck, 2011.
	
	\bibitem{GiSa13c}
	A.~Gilio and G.~Sanfilippo.
	\newblock Conditional random quantities and iterated conditioning in the
	setting of coherence.
	\newblock In L.~C. van~der Gaag, editor, {\em ECSQARU 2013}, volume 7958 of
	{\em LNCS}, pages 218--229. Springer, Berlin, Heidelberg, 2013.
	
	\bibitem{GiSa13a}
	A.~Gilio and G.~Sanfilippo.
	\newblock {C}onjunction, disjunction and iterated conditioning of conditional
	events.
	\newblock In {\em Synergies of Soft Computing and Statistics for Intelligent
		Data Analysis}, volume 190 of {\em AISC}, pages 399--407. Springer, Berlin,
	2013.
	
	\bibitem{GiSa13IJAR}
	A.~Gilio and G.~Sanfilippo.
	\newblock Probabilistic entailment in the setting of coherence: {T}he role of
	quasi conjunction and inclusion relation.
	\newblock {\em International Journal of Approximate Reasoning}, 54(4):513--525,
	2013.
	
	\bibitem{gilio13}
	A.~Gilio and G.~Sanfilippo.
	\newblock Quasi conjunction, quasi disjunction, t-norms and t-conorms:
	{P}robabilistic aspects.
	\newblock {\em Information Sciences}, 245:146--167, 2013.
	
	\bibitem{GiSa14}
	A.~Gilio and G.~Sanfilippo.
	\newblock Conditional random quantities and compounds of conditionals.
	\newblock {\em Studia Logica}, 102(4):709--729, 2014.
	
	\bibitem{GiSa17}
	A.~Gilio and G.~Sanfilippo.
	\newblock Conjunction and disjunction among conditional events.
	\newblock In S.~Benferhat, K.~Tabia, and M.~Ali, editors, {\em IEA/AIE 2017,
		Part II}, volume 10351 of {\em LNCS}, pages 85--96. Springer, Cham, 2017.
	
	\bibitem{GoNg88}
	I.~R. Goodman and H.~T. Nguyen.
	\newblock {Conditional Objects and the Modeling of Uncertainties}.
	\newblock In M.~M. Gupta and T.~Yamakawa, editors, {\em Fuzzy Computing}, pages
	119--138. North-Holland, 1988.
	
	\bibitem{GoNW91}
	I.~R. Goodman, Hung~T. Nguyen, and Elbert~A. Walker.
	\newblock {\em Conditional Inference and Logic for Intelligent Systems: A
		Theory of Measure-Free Conditioning}.
	\newblock North-Holland, 1991.
	
	\bibitem{hailperin96}
	T.~Hailperin.
	\newblock {\em Sentential probability logic. {O}rigins, development, current
		status, and technical applications}.
	\newblock Lehigh University Press, Bethlehem, 1996.
	
	\bibitem{Kauf09}
	S.~Kaufmann.
	\newblock Conditionals right and left: Probabilities for the whole family.
	\newblock {\em Journal of Philosophical Logic}, 38:1--53, 2009.
	
	\bibitem{lad96}
	F.~Lad.
	\newblock {\em Operational subjective statistical methods: A mathematical,
		philosophical, and historical introduction}.
	\newblock Wiley, New York, 1996.
	
	\bibitem{LSA12}
	F.~Lad, G.~Sanfilippo, and G.~Agr\'{o}.
	\newblock Completing the logarithmic scoring rule for assessing probability
	distributions.
	\newblock {\em AIP Conf. Proc.}, 1490(1):13--30, 2012.
	
	\bibitem{LSA15}
	F.~Lad, G.~Sanfilippo, and G.~Agr\'o.
	\newblock Extropy: complementary dual of entropy.
	\newblock {\em Statistical Science}, 30(1):40--58, 2015.
	
	\bibitem{McGe89}
	V.~McGee.
	\newblock Conditional probabilities and compounds of conditionals.
	\newblock {\em Philosophical Review}, 98(4):485--541, 1989.
	
	\bibitem{Miln97}
	P.~Milne.
	\newblock {Bruno de Finetti and the Logic of Conditional Events}.
	\newblock {\em British Journal for the Philosophy of Science}, 48(2):195--232,
	1997.
	
	\bibitem{PeVa17}
	D.~Petturiti and B.~Vantaggi.
	\newblock Envelopes of conditional probabilities extending a strategy and a
	prior probability.
	\newblock {\em Int. Journal of Approximate Reasoning}, 81:160 -- 182,
	2017.
	
	\bibitem{SaPG17}
	G.~Sanfilippo, N.~Pfeifer, and A.~Gilio.
	\newblock Generalized probabilistic modus ponens.
	\newblock In A.~Antonucci, L.~Cholvy, and O.~Papini, editors, {\em ECSQARU
		2017}, volume 10369 of {\em LNCS}, pages 480--490. Springer, 2017.
	
	\bibitem{SPOG18}
	G.~Sanfilippo, N.~Pfeifer, D.E. Over, and A.~Gilio.
	\newblock Probabilistic inferences from conjoined to iterated conditionals.
	\newblock {\em International Journal of Approximate Reasoning}, 93(Supplement
	C):103 -- 118, 2018.
	
	\bibitem{Verheij2017}
	Bart Verheij.
	\newblock Proof with and without probabilities.
	\newblock {\em Artificial Intelligence and Law}, 25(1):127--154, 2017.
	
\end{thebibliography}

\end{document}